\newtheorem{theorem}{Theorem}
\newtheorem{lemma}[theorem]{Lemma}
\newtheorem{prop}[theorem]{Proposition}
\newtheorem{remark}{Remark}
\newtheorem{claim}{Claim}
\newenvironment{proof-sketch}{\noindent{\bf Sketch of Proof}\hspace*{1em}}{\qed\bigskip}
\newcommand{\RR}{\mathbb R}
\newcommand{\NN}{\mathbb N}
\newcommand{\ZZ}{\mathbb Z}
\renewcommand{\leq}{\leqslant}
\renewcommand{\geq}{\geqslant}
\begin{document}

\title[Existence and multiplicity of solutions for resonant $(p,2)$-equations]{Existence and multiplicity of solutions for resonant $(p,2)$-equations}

\author[N.S. Papageorgiou]{Nikolaos S. Papageorgiou}
\address[N.S. Papageorgiou]{Department of Mathematics, National Technical University,
				Zografou Campus, Athens 15780, Greece}
\email{\tt npapg@math.ntua.gr}

\author[V.D. R\u{a}dulescu]{Vicen\c{t}iu D. R\u{a}dulescu}
\address[V.D. R\u{a}dulescu]{Department of Mathematics, Faculty of Sciences, King
Abdulaziz University, P.O. Box 80203, Jeddah 21589, Saudi Arabia \& Institute of Mathematics ``Simion Stoilow" of the Romanian Academy, P.O. Box 1-764, 014700 Bucharest, Romania}
\email{\tt vicentiu.radulescu@imar.ro}

\author[D.D. Repov\v s]{Du\v san D. Repov\v s}
\address[D.D. Repov\v s]{Faculty of Education and Faculty of Mathematics and Physics,
University of Ljubljana, SI-1000 Ljubljana, Slovenia}
\email{dusan.repovs@guest.arnes.si}

\keywords{Resonance, variational eigenvalues, critical groups, nonlinear regularity, multiple solutions, nodal solutions.\\
\phantom{aa} 2010 AMS Subject Classification: 35J20, 35J60. Secondary: 58E05}

\begin{abstract}
We consider Dirichlet elliptic equations driven by the sum of a $p$-Laplacian $(2<p)$ and a Laplacian. The conditions on the reaction term imply that the problem is resonant at both $\pm\infty$ and at zero. We prove an existence theorem (producing one nontrivial smooth solution) and a multiplicity theorem (producing five nontrivial smooth solutions, four of constant sign and the fifth nodal; the solutions are ordered). Our approach uses variational methods and critical groups.
\end{abstract}

\maketitle


\section{Introduction}

Let $\Omega\subseteq\RR^N$ be a bounded domain with a $C^2$-boundary $\partial\Omega$. In this paper we study the following nonlinear, nonhomogeneous Dirichlet problem:
\begin{equation}\label{eq1}
	\left\{\begin{array}{ll}
	-\Delta_p u(z)-\Delta u(z)=f(z,u(z))&\mbox{in}\ \Omega,\\
	u|_{\partial\Omega}=0,\quad 2<p.&
	\end{array}\right\}
\end{equation}

Here, for $r\in(1,\infty)$, we denote by $\Delta_r$ the $r$-Laplacian defined by
$$\Delta_ru={\rm div}\,(|Du|^{r-2}Du)\ \mbox{for all}\ u\in W^{1,r}_0(\Omega).$$

When $r=2$, we write $\Delta_2=\Delta$ (the standard Laplace differential operator). The reaction term $f(z,x)$ is a Carath\'eodory function (that is, for all $x\in\RR$, $z\mapsto f(z,x)$ is measurable and for almost all $z\in\Omega$, $x\mapsto f(z,x)$ is continuous). We assume that for almost all $z\in\Omega$, $f(z,\cdot)$ is $(p-1)$-sublinear near $\pm\infty$ and asymptotically as $x\rightarrow\pm\infty$ the quotient $\frac{f(z,x)}{|x|^{p-2}x}$ interacts with the variational part of the spectrum of $(-\Delta_p,W^{1,p}_{0}(\Omega))$ (resonant problem). Equations driven by the sum of a $p$-Laplacian and a Laplacian (known as $(p,2)$-equations) have recently been studied by Aizicovici, Papageorgiou and Staicu \cite{3}, Cingolani and Degiovanni \cite{10}, Papageorgiou and R\u adulescu \cite{23,24},  Papageorgiou, R\u adulescu and Repov\v{s} \cite{27}, Papageorgiou and Winkert \cite{28}, Sun \cite{31}, Sun, Zhang and Su \cite{32}. The aforementioned works, either do not consider
  resonant at $\pm\infty$ equations (see Aizicovici, Papageorgiou and Staicu \cite{3}, Cingolani and Degiovanni \cite{10}, Sun \cite{31}, Sun, Zhang and Su \cite{32}) or the resonance is with respect to the principal eigenvalue (see Papageorgiou and R\u adulescu \cite{23,24}, Papageorgiou, R\u adulescu and Repov\v{s} \cite{27}, Papageorgiou and Winkert \cite{28}). For $p\neq 2$, we not have a complete knowledge of the spectrum of $(-\Delta_p,W^{1,p}_{0}(\Omega))$, the eigenspaces are not linear subspaces of $W^{1,p}_{0}(\Omega)$ and the Sobolev space $W^{1,p}_{0}(\Omega)$ cannot be expressed as a direct sum of the eigenspaces. All these negative facts make difficult the study of  problems with resonance at higher parts of the spectrum. Our present paper is closer to those of Cingolani and Degiovanni \cite{10}
and of Papageorgiou and R\u adulescu \cite{23}. Compared to Cingolani and Degiovanni \cite{10},
we allow for resonance to occur and so we improve their existence theorem. Compared with the work of Papageorgiou and R\u adulescu \cite{23}, the resonance is with respect to any variational eigenvalue of $(-\Delta_p,W^{1,p}_{0}(\Omega))$, not only the principal one.

Using tools from Morse theory and variational methods based on the critical point theory, we prove existence and multiplicity theorems for resonant $(p,2)$-equations. We mention that $(p,2)$-equations arise in problems of mathematical physics. The Dirichlet $(p,2)$-problem treated in this paper  models some phenomena in quantum physics as first
pointed out by Benci, Fortunato and Pisani \cite{benci}. We refer to the works of Benci, D'Avenia, Fortunato and Pisani \cite{5} (in quantum physics) and Cherfils and Ilyasov \cite{9} (in plasma physics). Related results on $(p,q)$-Laplacian problems are due to Marano, Mosconi and Papageorgiou \cite{marano} and Mugnai and Papageorgiou \cite{mugnai}.

In the next section we briefly recall the main mathematical tools which will be used in the sequel.

\section{Mathematical Background}

Let $X$ be a Banach space and $X^*$ its topological dual. By $\left\langle \cdot,\cdot\right\rangle$ we denote the duality brackets for the dual pair $(X^*,X)$. Also, let $\varphi\in C^1(X,\RR)$. We say that $\varphi$ satisfies the ``Cerami condition'' (the ``C-condition'' for short), if the following property holds:
\begin{center}
``Every sequence $\{u_n\}_{n\geq 1}\subseteq X$ such that $\{\varphi(u_n)\}_{n\geq 1}\subseteq\RR$ is bounded and
$$(1+||u_n||)\varphi'(u_n)\rightarrow 0\ \mbox{in}\ X^*\ \mbox{as}\ n\rightarrow\infty,$$
admits a strongly convergent subsequence''.
\end{center}

This compactness-type condition on the functional $\varphi$ leads to a deformation theorem from which one derives the minimax theory of the critical values of $\varphi$. A basic result in this theory is the celebrated ``mountain pass theorem'' due to Ambrosetti and Rabinowitz \cite{4}. Hence we state the result in a slightly more general form (see, for example, Gasinski and Papageorgiou \cite[p. 648]{15}).
\begin{theorem}\label{th1}
	Let $X$ be a Banach space and assume that $\varphi\in C^1(X,\RR)$ satisfies the C-condition, $u_0,\, u_1\in X$, $||u_1-u_0||>\rho>0$,
	$$\max\{\varphi(u_0),\varphi(u_1)\}<\inf[\varphi(u):||u-u_0||=\rho]=m_{\rho}$$
	and $c=\inf\limits_{\gamma\in\Gamma}\max\limits_{0\leq t\leq 1}\varphi(\gamma(t))$, where $\Gamma=\{\gamma\in C([0,1],X):\gamma(0)=u_0,\gamma(1)=u_1\}$.

Then $c\geq m_{\rho}$ and $c$ is a critical value of $\varphi$ (that is, there exists $u\in X$ such that $\varphi'(u)=0$, $\varphi(u)=c$).
\end{theorem}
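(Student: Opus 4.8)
The plan is to argue in two stages: first establish the lower bound $c \geq m_\rho$ by a topological obstruction, and then show that $c$ is a critical value by a deformation argument run by contradiction. For the first stage, fix any $\gamma \in \Gamma$ and consider the continuous function $t \mapsto ||\gamma(t) - u_0||$ on $[0,1]$; it equals $0$ at $t = 0$ and equals $||u_1 - u_0|| > \rho$ at $t = 1$, so by the intermediate value theorem there is $t_* \in (0,1)$ with $||\gamma(t_*) - u_0|| = \rho$. Hence $\max_{0 \leq t \leq 1}\varphi(\gamma(t)) \geq \varphi(\gamma(t_*)) \geq m_\rho$, and taking the infimum over $\gamma \in \Gamma$ gives $c \geq m_\rho$; in particular $c > \max\{\varphi(u_0), \varphi(u_1)\}$.

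For the second stage, suppose $c$ is not a critical value, i.e. $K_c := \{u \in X : \varphi'(u) = 0,\ \varphi(u) = c\} = \emptyset$. The first step is a quantitative coercivity estimate near the level $c$ extracted from the C-condition: there exist $\varepsilon_0, \delta > 0$ such that $(1 + ||u||)\,||\varphi'(u)||_* \geq \delta$ whenever $|\varphi(u) - c| \leq \varepsilon_0$; otherwise a sequence $\{u_n\}$ with $\varphi(u_n) \to c$ and $(1 + ||u_n||)\varphi'(u_n) \to 0$ in $X^*$ would, by the C-condition, have a subsequence converging to some $u$ with $\varphi(u) = c$ and $\varphi'(u) = 0$, i.e. $u \in K_c = \emptyset$, a contradiction. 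After shrinking $\varepsilon_0$ so that also $c - \varepsilon_0 > \max\{\varphi(u_0), \varphi(u_1)\}$, I would construct, from a locally Lipschitz pseudo-gradient vector field for $\varphi$ rescaled by the factor $(1 + ||u||)$ and cut off outside the strip $\{u : |\varphi(u) - c| \leq \varepsilon_0\}$, a deformation $h \in C([0,1] \times X, X)$ with $h(0, \cdot) = \mathrm{id}$, with $h(s, u) = u$ whenever $\varphi(u) \notin [c - \varepsilon_0, c + \varepsilon_0]$, and with $h(1, \varphi^{c+\varepsilon}) \subseteq \varphi^{c-\varepsilon}$ for a suitable $\varepsilon \in (0, \varepsilon_0)$, where $\varphi^a := \{u \in X : \varphi(u) \leq a\}$.

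To conclude, by definition of $c$ choose $\gamma \in \Gamma$ with $\max_{0 \leq t \leq 1}\varphi(\gamma(t)) < c + \varepsilon$ and set $\tilde\gamma := h(1, \gamma(\cdot))$. Since $\varphi(u_0), \varphi(u_1) < c - \varepsilon_0$, the deformation fixes the endpoints, so $\tilde\gamma \in \Gamma$; moreover $\tilde\gamma([0,1]) \subseteq h(1, \varphi^{c+\varepsilon}) \subseteq \varphi^{c-\varepsilon}$, whence $\max_{0 \leq t \leq 1}\varphi(\tilde\gamma(t)) \leq c - \varepsilon < c$, contradicting the definition of $c$ as an infimum. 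Hence $K_c \neq \emptyset$, that is, $c$ is a critical value. The main obstacle is the construction of $h$ under the Cerami condition rather than the stronger Palais--Smale condition: the C-condition only controls the weighted derivative $(1 + ||u_n||)\varphi'(u_n)$, so the pseudo-gradient flow must be rescaled by $(1 + ||u||)$, and one must check that the rescaled flow is globally defined in time and crosses the strip $[c - \varepsilon, c + \varepsilon]$ within a uniformly bounded time — which is exactly where the lower bound $\delta$ is used.
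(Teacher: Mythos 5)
The paper does not prove Theorem \ref{th1} at all: it is quoted as a known background result, with a pointer to Gasinski and Papageorgiou \cite[p.\ 648]{15}, so there is no in-paper argument to compare yours against. Your outline is the standard one and is sound. Stage one (the intermediate value theorem applied to $t\mapsto\|\gamma(t)-u_0\|$, forcing every path to cross the sphere $\|u-u_0\|=\rho$) is complete and correct. Stage two is the classical contradiction via a quantitative deformation theorem, and you correctly isolate the two points where the Cerami condition, as opposed to Palais--Smale, matters: (i) the extraction of the uniform lower bound $(1+\|u\|)\,\|\varphi'(u)\|_*\geq\delta$ on the strip $|\varphi(u)-c|\leq\varepsilon_0$ (your compactness argument for this is correct -- the convergent subsequence has bounded norms, so the weight $1+\|u_n\|\geq 1$ lets you pass from the weighted estimate to $\varphi'(u)=0$); and (ii) the need to rescale the pseudo-gradient flow by $1+\|u\|$, which forces you to verify global existence of the flow (the growth bound $\frac{d}{ds}\|\eta(s,u)\|\leq C(1+\|\eta(s,u)\|)$ gives this) and a uniform crossing time. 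The one caveat is that the deformation lemma itself -- the construction of $h$ with the three listed properties under the C-condition -- is asserted rather than carried out, and it is the genuine technical content of the theorem; as written, your argument is a correct reduction of the mountain pass theorem to that lemma rather than a self-contained proof. Since the paper itself delegates the whole theorem to a textbook, this level of detail is consistent with how the result is used here.
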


Three Banach spaces will be central in our analysis of problem (\ref{eq1}). We refer to the Dirichlet Sobolev spaces $W^{1,p}_{0}(\Omega)$ and $H^1_0(\Omega)$ and the Banach space $C^1_0(\overline{\Omega})=\{u\in C^1(\overline{\Omega}):u|_{\partial\Omega}=0\}$.

By Poincar\'e's inequality, the norm of $W^{1,p}_{0}(\Omega)$ can be defined by
$$||u||=||Du||_p\ \mbox{for all}\ u\in W^{1,p}_{0}(\Omega).$$

The space $H^1_0(\Omega)$ is a Hilbert space and again the Poincar\'e inequality implies that we can choose as inner product
$$(u,h)=(Du,Dh)_{L^2(\Omega,\RR^N)}\ \mbox{for all}\ u,h\in H^1_0(\Omega).$$

The corresponding norm is
$$||u||_{H^1_0(\Omega)}=||Du||_2\ \mbox{for all}\ u\in H^1_0(\Omega).$$

The Banach space $C^1_0(\overline{\Omega})$ is an ordered Banach space with positive cone
$$C_+=\{u\in C^1_0(\overline{\Omega}):u(z)\geq 0\ \mbox{for all}\ z\in\overline{\Omega}\}.$$

This cone has a nonempty interior, given by
$${\rm int}\, C_+=\{u\in C_+:u(z)>0\ \mbox{for all}\ z\in\Omega,\ \left.\frac{\partial u}{\partial n}\right|_{\partial\Omega}<0\}.$$

Here, $\frac{\partial u}{\partial n}$ is the usual normal derivative defined by $\frac{\partial u}{\partial n}=(Du,n)_{\RR^N}$, with $n(\cdot)$ being the outward unit normal on $\partial\Omega$. Recall that $C^1_0(\overline{\Omega})$ is dense in both $W^{1,p}_{0}(\Omega)$ and $H^1_0(\Omega)$.

Given $x\in\RR$, we set $x^{\pm}=\max\{\pm x,0\}$ and then define $u^{\pm}(\cdot)=u(\cdot)^{\pm}$ for all $u\in W^{1,p}_{0}(\Omega)$. We know that
$$u^{\pm}\in W^{1,p}_{0}(\Omega),\ u=u^+-u^-,\ |u|=u^++u^-.$$

Also,  we denote the Lebesgue measure on $\RR^N$ by $|\cdot|_N$ and if $g:\Omega\times\RR\rightarrow\RR$ is a measurable function (for example, a Carath\'eodory function), we define the Nemytskii map corresponding to $g(\cdot,\cdot)$ by
$$N_g(u)(\cdot)=g(\cdot,u(\cdot))\ \mbox{for all}\ u\in W^{1,p}_{0}(\Omega).$$

We will use the spectra of the operators $(-\Delta_p,W^{1,p}_{0}(\Omega))$ and $(-\Delta,H^1_0(\Omega))$. We start with the spectrum of $(-\Delta_p,W^{1,p}_{0}(\Omega))$. So, consider the following nonlinear eigenvalue problem:
\begin{equation}\label{eq2}
	-\Delta_p u(z)=\hat{\lambda}|u(z)|^{p-2}u(z)\ \mbox{in}\ \Omega,\quad u|_{\partial\Omega}=0\ (1<p<\infty).
\end{equation}

We say that $\hat{\lambda}\in\RR$ is an ``eigenvalue'' of ($-\Delta_p,W^{1,p}_{0}(\Omega)$), if problem (\ref{eq2}) admits a nontrivial solution $\hat{u}\in W^{1,p}_{0}(\Omega)$, known as an ``eigenfunction'' corresponding to $\hat{\lambda}$. We know that there exists the smallest eigenvalue $\hat{\lambda}_1(p)>0$, which has the following properties:
\begin{itemize}
	\item $\hat{\lambda}_1(p)$ is isolated in the spectrum $\hat{\sigma}(p)$ of $(-\Delta_p,W^{1,p}_{0}(\Omega))$ (that is, there exists $\epsilon>0$ such that $(\hat{\lambda}_1(p),\hat{\lambda}_1(p)+\epsilon)\cap\hat{\sigma}(p)=\emptyset$);
	\item $\hat{\lambda}_1(p)$ is simple (that is, if $\hat{u},\tilde{u}\in W^{1,p}_{0}(\Omega)$ are eigenfunctions corresponding to $\hat{\lambda}_1(p)$, then $\hat{u}=\xi\tilde{u}$ with $\xi\in\RR\backslash\{0\}$);
 \begin{equation}\label{eq3}
		\bullet\hat{\lambda}_1(p)=\inf\left[\frac{||Du||^p_p}{||u||^p_p}:u\in W^{1,p}_{0}(\Omega),u\neq 0\right].
	\end{equation}
\end{itemize}

In (\ref{eq3}) the infimum is realized on the one-dimensional eigenspace corresponding to $\hat{\lambda}_1(p)$. The above properties imply that the elements of this eigenspace do not change sign. We point out that the nonlinear regularity theory (see, for example, Gasinski and Papageorgiou \cite[p. 737]{15}), implies that all eigenfunctions of $(-\Delta_p,W^{1,p}_{0}(\Omega))$ belong to $C^1_0(\overline{\Omega})$. By $\hat{u}_1(p)$ we denote the positive $L^p$-normalized (that is, $||\hat{u}_1(p)||_p=1$) eigenfunction corresponding to $\hat{\lambda}_1(p)>0$. As we have already mentioned, $\hat{u}_1(p)\in C_+\backslash\{0\}$ and in fact, the nonlinear maximum principle (see for example Gasinski and Papageorgiou \cite[p. 738]{15}) implies that $\hat{u}_1(p)\in {\rm int}\,C_+$. An eigenfunction $\hat{u}$ which corresponds to an eigenvalue $\hat{\lambda}\neq\hat{\lambda}_1(p)$ is nodal (sign changing). Since $\hat{\sigma}(p)$ is closed and $\hat{\lambda}_1(p)>0$ is isolated, the
 second
  eigenvalue $\hat{\lambda}_2(p)$ is well-defined by
$$\hat{\lambda}_2(p)=\min\{\hat{\lambda}\in\hat{\sigma}(p):\hat{\lambda}>\hat{\lambda}_1(p)\}.$$

For additional eigenvalues, we employ the Ljusternik-Schnirelmann minimax scheme which gives the entire nondecreasing sequence of eigenvalues $\{\hat{\lambda}_k(p)\}_{k\geq 1}$ such that $\hat{\lambda}_k(p)\rightarrow+\infty$. These eigenvalues are known as ``variational eigenvalues'' and depending on the index used in the Ljusternik-Schnirelmann scheme, we can have various such sequences of variational eigenvalues, which all coincide in the first two elements $\hat{\lambda}_1(p)$ and $\hat{\lambda}_2(p)$ defined as described above. For the other elements we do not know if their sequences coincide. Here we use the sequence constructed by using the Fadell-Rabinowitz \cite{13} cohomological index (see Perera \cite{29}). Note that we do not know if the variational eigenvalues exhaust the spectrum $\hat{\sigma}(p)$. We have full knowledge of the spectrum if $N=1$ (ordinary differential equations) and when $p=2$ (linear eigenvalue problem). In the latter case, we have $\hat{\sigma
 }(2)=\{\
 \hat{\lambda}_k(2)\}_{k\geq 1}$ with $0<\hat{\lambda}_1(2)<\hat{\lambda}_2(2)<\ldots<\hat{\lambda}_k(2)\rightarrow+\infty$ ad $k\rightarrow\infty$. The corresponding eigenspaces, denoted by $E(\hat{\lambda}_k(2))$, are linear spaces and we have the following orthogonal direct sum decomposition
$$H^1_0(\overline{\Omega})=\overline{{\underset{\mathrm{k\geq 1}}\oplus}E(\hat{\lambda}_k(2))}.$$

For all $k\in\NN$, each $E(\hat{\lambda}_k(2))$ is finite dimensional, $E(\hat{\lambda}_k(2))\subseteq C^1_0(\overline{\Omega})$ and has the so-called ``Unique Continuation Property'' (``UCP'' for short), that is, if $u\in E(\hat{\lambda}_k(2))$ vanishes on a set of positive measure in $\Omega$, then $u\equiv 0$. For every $k\in\NN$ we define
$$\bar{H}_k=\overset{k}{\underset{\mathrm{i=1}}\oplus}E(\hat{\lambda}_i(2))\ \mbox{and}\ \hat{H}_{k+1}=\overline{{\underset{\mathrm{i\geq k+1}}\oplus}E(\hat{\lambda}_i(2))}=\bar{H}^\perp_k.$$

We have
$$H^1_0(\Omega)=\bar{H}_k\oplus\hat{H}_{k+1}.$$

In this case all eigenvalues admit variational characterizations and we have
\begin{eqnarray}
	\hat{\lambda}_1(2)&=&\inf\left[\frac{||Du||^2_2}{||u||^2_2}:u\in H^1_0(\Omega),u\neq 0\right]\label{eq4}\\
	\hat{\lambda}_k(2)&=&\sup\left[\frac{||Du||^2_2}{||u||^2_2}:u\in \bar{H}_k,u\neq 0\right]\nonumber\\
	&=&\inf\left[\frac{||Du||^2_2}{||u||^2_2}:u\in\hat{H}_k,u\neq 0\right],\ k\geq 2.\label{eq5}
\end{eqnarray}

Again, the infimum in (\ref{eq4}) is realized on the one-dimensional eigenspace $E(\hat{\lambda}_1(2))$, while both the supremum and the infimum in (\ref{eq5}) are realized on $E(\hat{\lambda}_k(2))$.

As a consequence of the UCP, we have the following convenient inequalities.
\begin{lemma}\label{lem2}
	\begin{itemize}
		\item[(a)] If $k\in\NN,\vartheta\in L^{\infty}(\Omega),\vartheta(z)\geq\hat{\lambda}_k(2)$ for almost all $z\in\Omega$, $\vartheta\not\equiv\hat{\vartheta}_k(2)$, then there exists a constant $c_0>0$ such that
		$$||Du||^2_2-\int_{\Omega}\vartheta(z)u^2dz\leq-c_0||u||^2\ \mbox{for all}\ u\in\bar{H}_k.$$
		\item[(b)] If $k\in\NN,\vartheta\in L^{\infty}(\Omega)$, $\vartheta(z)\leq\hat{\lambda}_k(2)$ for almost all $z\in\Omega$, $\vartheta\not\equiv\hat{\lambda}_k(2)$, then there exists a constant $c_1>0$
		$$||Du||^2_2-\int_{\Omega}\vartheta(z)u^2dz\geq c_1||u||^2\ \mbox{for all}\ u\in\hat{H}_k.$$
	\end{itemize}
\end{lemma}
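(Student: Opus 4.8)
The plan is to prove both inequalities by a spectral decomposition argument on the finite-dimensional space $\bar{H}_k$ (for (a)) and on the closed subspace $\hat{H}_k$ (for (b)), using the variational characterizations (\ref{eq4})--(\ref{eq5}) together with the Unique Continuation Property to upgrade the non-strict inequalities $||Du||_2^2 - \hat{\lambda}_k(2)||u||_2^2 \gtrless 0$ into strict ones with a uniform constant. I will carry out (b) in detail; (a) is entirely symmetric.

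For part (b), first I would argue by contradiction. Suppose no such $c_1>0$ exists. Then there is a sequence $\{u_n\}_{n\ge 1}\subseteq\hat{H}_k$ with $||u_n||=1$ for all $n$ and $||Du_n||_2^2 - \int_\Omega \vartheta(z)u_n^2\,dz \to 0^+$ (the quantity is $\ge 0$ by the second characterization in (\ref{eq5}), since $\vartheta(z)\le\hat{\lambda}_k(2)$). By reflexivity of $H^1_0(\Omega)$ and compactness of the embedding $H^1_0(\Omega)\hookrightarrow L^2(\Omega)$, passing to a subsequence I get $u_n\rightharpoonup u$ in $H^1_0(\Omega)$ and $u_n\to u$ in $L^2(\Omega)$, with $u\in\hat{H}_k$ since $\hat{H}_k$ is weakly closed. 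From weak lower semicontinuity of $u\mapsto ||Du||_2^2$ and the $L^2$-convergence, I obtain $||Du||_2^2 - \int_\Omega\vartheta(z)u^2\,dz \le \liminf(||Du_n||_2^2 - \int_\Omega\vartheta(z)u_n^2\,dz) = 0$. Combined with the reverse inequality from (\ref{eq5}), this forces equality, hence $||Du_n||_2^2\to||Du||_2^2$, so $u_n\to u$ strongly in $H^1_0(\Omega)$ and $||u||=1$, in particular $u\ne 0$.

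Next I would exploit that $u$ realizes equality in $||Du||_2^2 = \int_\Omega\vartheta(z)u^2\,dz \le \hat{\lambda}_k(2)||u||_2^2$ while $u\in\hat{H}_k$ satisfies $||Du||_2^2\ge\hat{\lambda}_k(2)||u||_2^2$; therefore $||Du||_2^2 = \hat{\lambda}_k(2)||u||_2^2$, so $u$ attains the infimum in (\ref{eq5}) and hence $u\in E(\hat{\lambda}_k(2))$. But then $\int_\Omega(\hat{\lambda}_k(2)-\vartheta(z))u^2\,dz = \hat{\lambda}_k(2)||u||_2^2 - \int_\Omega\vartheta(z)u^2\,dz = 0$, and since $\hat{\lambda}_k(2)-\vartheta(z)\ge 0$ almost everywhere, the integrand vanishes almost everywhere; thus $u(z)=0$ for almost all $z$ in the set $\{\vartheta<\hat{\lambda}_k(2)\}$, which has positive measure because $\vartheta\not\equiv\hat{\lambda}_k(2)$. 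By the UCP of $E(\hat{\lambda}_k(2))$, a nonzero element cannot vanish on a set of positive measure, so $u\equiv 0$, contradicting $||u||=1$. This completes (b); for (a) one runs the same scheme on the finite-dimensional $\bar{H}_k$, where weak and strong convergence coincide so the compactness step is automatic, using the supremum characterization in (\ref{eq5}) and the membership $u\in E(\hat{\lambda}_k(2))$ of any maximizer.

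The main obstacle — really the only subtle point — is the passage from the limit identity $||Du||_2^2=\hat{\lambda}_k(2)||u||_2^2$ to the conclusion $u\in E(\hat{\lambda}_k(2))$: one must invoke that the infimum (resp. supremum) in (\ref{eq5}) is attained precisely on the eigenspace, as stated in the excerpt right after (\ref{eq5}). Everything else is the standard direct-method compactness argument, and the UCP then delivers the contradiction cleanly.
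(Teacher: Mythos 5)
Your proof is correct, and it is essentially the argument the authors have in mind: the paper states Lemma \ref{lem2} without proof, merely as ``a consequence of the UCP,'' and your direct-method/contradiction scheme --- compactness (Rellich plus weak lower semicontinuity, or finite-dimensionality for (a)), the fact that the extrema in (\ref{eq5}) are realized exactly on $E(\hat{\lambda}_k(2))$, and then the UCP to rule out a nontrivial minimizer/maximizer vanishing on $\{\vartheta\neq\hat{\lambda}_k(2)\}$ --- is precisely that standard consequence. No gaps.
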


In what follows, let $A_p:W^{1,p}_{0}(\Omega)\rightarrow W^{-1,p'}(\Omega)=W^{1,p}_{0}(\Omega)^*$ $\left(\frac{1}{p}+\frac{1}{p'}=1,1<p<\infty\right)$ be the map defined by
$$\left\langle A_p(u),h\right\rangle=\int_{\Omega}|Du|^{p-2}(Du,Dh)_{\RR^N}dz\ \mbox{for all}\ u,h\in W^{1,p}_{0}(\Omega).$$

By Motreanu, Motreanu and Papageorgiou \cite[p. 40]{21}, we have:
\begin{prop}\label{prop3}
	The map $A_p:W^{1,p}_{0}(\Omega)\rightarrow W^{-1,p'}(\Omega)\ (1<p<\infty)$ is bounded (that is, it maps bounded sets to bounded sets), continuous, strictly monotone (hence maximal monotone, too) and of type $(S)_+$, that is,
	\begin{center}
	``if $u_n\stackrel{w}{\rightarrow}u$ in $W^{1,p}_{0}(\Omega)$ and $\limsup\limits_{n\rightarrow\infty}\left\langle A(u_n),u_n-u\right\rangle\leq 0$, then $u_n\rightarrow n$ in $W^{1,p}_{0}(\Omega)$.''
	\end{center}
\end{prop}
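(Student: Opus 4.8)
The plan is to verify the four asserted properties in turn; the first three are routine, and the substance of the statement lies in the $(S)_+$ property. For \emph{boundedness}, H\"older's inequality gives $|\langle A_p(u),h\rangle|\le\|Du\|_p^{p-1}\|Dh\|_p=\|u\|^{p-1}\|h\|$, so $\|A_p(u)\|_*\le\|u\|^{p-1}$ and $A_p$ maps bounded sets to bounded sets. For \emph{continuity}, I would argue along subsequences: if $u_n\to u$ in $W^{1,p}_0(\Omega)$, pass to a subsequence with $Du_n\to Du$ a.e.\ and $|Du_n|\le g$ for some $g\in L^p(\Omega)$; then $|Du_n|^{p-2}Du_n\to|Du|^{p-2}Du$ a.e.\ and is dominated in modulus by $g^{p-1}\in L^{p'}(\Omega)$, so by the dominated convergence theorem $|Du_n|^{p-2}Du_n\to|Du|^{p-2}Du$ in $L^{p'}(\Omega,\RR^N)$, whence $A_p(u_n)\to A_p(u)$ in $W^{-1,p'}(\Omega)$; the standard Urysohn subsequence criterion then upgrades this to convergence of the full sequence.

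For \emph{strict monotonicity} I would write
$$\langle A_p(u)-A_p(v),u-v\rangle=\int_\Omega\big(|Du|^{p-2}Du-|Dv|^{p-2}Dv,\,Du-Dv\big)_{\RR^N}\,dz$$
and use that, by convexity of $\xi\mapsto\frac{1}{p}|\xi|^p$ on $\RR^N$, the integrand is pointwise nonnegative and vanishes (for a.a.\ $z$) only where $Du=Dv$; hence the bracket is $\ge0$ and equals $0$ exactly when $Du=Dv$ a.e., i.e.\ when $u=v$ by Poincar\'e's inequality. Since $A_p$ is moreover everywhere defined and continuous (in particular hemicontinuous), it is maximal monotone by the classical criterion for monotone operators.

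The core of the proof is the $(S)_+$ property. Assume $u_n\stackrel{w}{\rightarrow}u$ in $W^{1,p}_0(\Omega)$ and $\limsup_{n\to\infty}\langle A_p(u_n),u_n-u\rangle\le0$. Since $A_p(u)$ is fixed and $u_n-u\stackrel{w}{\rightarrow}0$, we get $\langle A_p(u),u_n-u\rangle\to0$; subtracting and using monotonicity ($\langle A_p(u_n)-A_p(u),u_n-u\rangle\ge0$) forces $\langle A_p(u_n)-A_p(u),u_n-u\rangle\to0$. I would then invoke the elementary vector inequalities for the map $\xi\mapsto|\xi|^{p-2}\xi$. In the range $p\ge2$ that is relevant to (\ref{eq1}) one has $(|\xi|^{p-2}\xi-|\eta|^{p-2}\eta,\xi-\eta)_{\RR^N}\ge c_p|\xi-\eta|^p$, so $c_p\|u_n-u\|^p\le\langle A_p(u_n)-A_p(u),u_n-u\rangle\to0$ and $u_n\to u$ strongly. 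For $1<p<2$ one uses instead $(|\xi|^{p-2}\xi-|\eta|^{p-2}\eta,\xi-\eta)_{\RR^N}\ge c_p|\xi-\eta|^2/(|\xi|+|\eta|)^{2-p}$ combined with H\"older's inequality (exponents $2/p$ and $2/(2-p)$) and the boundedness of $\{\|u_n\|\}$ to conclude once more that $\|Du_n-Du\|_p\to0$.

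The one step that is not purely mechanical is this last one, and even there the obstacle is confined to the sublinear range $1<p<2$, where the pointwise monotonicity estimate degenerates and one is forced to interpolate via H\"older to regain control of $\|Du_n-Du\|_p$. For $p\ge2$ — the only case used in the present paper — the first inequality delivers the conclusion immediately, so in the setting of problem (\ref{eq1}) the argument is entirely straightforward.
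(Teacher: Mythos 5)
Your proof is correct. Note, however, that the paper does not prove Proposition~\ref{prop3} at all: it is quoted as a known result with a citation to Motreanu, Motreanu and Papageorgiou \cite[p.~40]{21} (and the statement as printed even contains two typos, $A$ for $A_p$ and ``$u_n\rightarrow n$'' for ``$u_n\rightarrow u$'', which you have correctly read through). Your argument is the standard complete one and all four steps check out: the H\"older bound for boundedness, the a.e.--plus--domination subsequence argument for continuity, strict convexity of $\xi\mapsto\frac{1}{p}|\xi|^p$ plus Poincar\'e for strict monotonicity (with maximality via the everywhere-defined hemicontinuous monotone criterion), and for $(S)_+$ the reduction to $\left\langle A_p(u_n)-A_p(u),u_n-u\right\rangle\rightarrow 0$ followed by the two elementary vector inequalities, the degenerate case $1<p<2$ being handled by H\"older interpolation with exponents $2/p$ and $2/(2-p)$ against the bounded factor $(|Du_n|+|Du|)^{p(2-p)/2}$. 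Since the paper only ever uses $p>2$, the short branch of your $(S)_+$ argument already suffices for everything that follows, but the full-range proof you give is exactly what the cited reference contains.
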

	
	If $p=2$, then $A_2=A\in\mathcal{L}(H^1_0(\Omega),H^{-1}(\Omega))$.
	
	Consider a Carath\'eodory function $f_0:\Omega\times\RR\rightarrow\RR$ such that
	$$|f_0(z,x)|\leq a_0(z)(1+|x|^{r-1})\ \mbox{for almost all}\ z\in\Omega,\ \mbox{all}\ x\in\RR,$$
	with $a_0\in L^{\infty}(\RR)$ and $1<r<p^*$, where $p^*=\left\{\begin{array}{ll}
		\frac{Np}{N-p}&\mbox{if}\ p<N\\
		+\infty&\mbox{if}\ p\geq N
	\end{array}\right.$ (the critical Sobolev exponent). Let $F_0(z,x)=\int^x_0f_0(z,s)ds$ and consider the $C^1$-functional $\varphi_0:W^{1,p}_{0}(\Omega)\rightarrow\RR$ defined by
	$$\varphi_0(u)=\frac{1}{p}||Du||^p_p+\frac{1}{2}||Du||^2_2-\int_{\Omega}F_0(z,u)dz\ \mbox{for all}\ u\in W^{1,p}_{0}(\Omega).$$
	
	The next proposition is a special case of a more general result by Aizicovici, Papageorgiou and Staicu \cite{2}, see also Papageorgiou and R\u adulescu \cite{25,26} for similar results in different spaces. All these results are consequences of the nonlinear regularity theory of Lieberman \cite{19}.
\begin{prop}\label{prop4}
		Let $u_0\in W^{1,p}_{0}(\Omega)$ be a local $C^1_0(\bar{\Omega})$-minimizer of $\varphi_0$, that is, there exists $\rho_0>0$ such that
		$$\varphi_0(u_0)\leq\varphi_0(u_0+h)\ \mbox{for all}\ h\in C^1_0(\overline{\Omega})\ \mbox{with}\ ||h||_{C^1_0(\overline{\Omega})}\leq\rho_0.$$
		Then $u_0\in C^{1,\alpha}_0(\overline{\Omega})$ for some $\alpha\in(0,1)$ and it is also a local $W^{1,p}_{0}(\Omega)$-minimizer of $\varphi_0$, that is, there exists $\rho_1>0$ such that
		$$\varphi_0(u_0)\leq\varphi_0(u_0+h)\ \mbox{for all}\ h\in W^{1,p}_{0}(\Omega)\ \mbox{with}\ ||h||\leq\rho_1.$$
\end{prop}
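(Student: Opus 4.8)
The plan is to split the argument into a regularity part, which immediately yields the first conclusion, and a ``$C^1$ versus $W^{1,p}_{0}$'' part, proved by contradiction. First I would use that a local $C^1_0(\overline{\Omega})$-minimizer of $\varphi_0$ is in particular a critical point, so $u_0$ weakly solves $-\Delta_pu_0-\Delta u_0=f_0(z,u_0)$ in $\Omega$, $u_0|_{\partial\Omega}=0$. Since $a_0\in L^{\infty}(\Omega)$ and $f_0$ has subcritical growth ($r<p^*$), a Moser--De Giorgi iteration gives $u_0\in L^{\infty}(\Omega)$, and then the nonlinear regularity theory of Lieberman \cite{19} yields $u_0\in C^{1,\alpha}_0(\overline{\Omega})$ for some $\alpha\in(0,1)$; this is the first assertion, and it fixes once and for all the Hölder field $Du_0\in C^{0,\alpha}(\overline{\Omega},\RR^N)$ used below.

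Assume now, for contradiction, that $u_0$ is not a local $W^{1,p}_{0}(\Omega)$-minimizer of $\varphi_0$. Then for each $\epsilon\in(0,1]$ I would minimize $\varphi_0$ over the closed ball $\bar B_\epsilon=\{u\in W^{1,p}_{0}(\Omega):\|u-u_0\|\leq\epsilon\}$. As $\varphi_0$ is sequentially weakly lower semicontinuous and $\bar B_\epsilon$ is weakly compact, the infimum $m_\epsilon$ is attained at some $y_\epsilon$, and the contradiction hypothesis forces $m_\epsilon=\varphi_0(y_\epsilon)<\varphi_0(u_0)$, so $y_\epsilon\neq u_0$. Either $y_\epsilon$ lies in the interior of $\bar B_\epsilon$, so that $\varphi_0'(y_\epsilon)=0$, or it lies on the boundary, in which case the Lagrange multiplier rule applied to the $C^1$-constraint $u\mapsto\|u-u_0\|^p$ provides $\lambda_\epsilon\leq0$ with $\varphi_0'(y_\epsilon)=\lambda_\epsilon A_p(y_\epsilon-u_0)$; in either case $y_\epsilon$ weakly solves, with $\mu_\epsilon:=-\lambda_\epsilon\geq0$,
$$A_p(y_\epsilon)+A(y_\epsilon)+\mu_\epsilon A_p(y_\epsilon-u_0)=N_{f_0}(y_\epsilon)\ \mbox{in}\ W^{-1,p'}(\Omega).$$

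The decisive step is an $\epsilon$-uniform regularity bound for $y_\epsilon$. Since $\|y_\epsilon-u_0\|\leq1$, the family $\{y_\epsilon\}$ is bounded in $W^{1,p}_{0}(\Omega)$, hence $\{N_{f_0}(y_\epsilon)\}$ is bounded in a suitable Lebesgue space by subcriticality; testing the displayed equation with the truncations $\pm(\pm(y_\epsilon-u_0)-k)^{+}$, $k\geq0$, the extra term contributes $\mu_\epsilon\int|D(y_\epsilon-u_0)|^p\geq0$ over the relevant superlevel set and can simply be discarded, after which a De Giorgi--Stampacchia iteration gives $\|y_\epsilon\|_{L^{\infty}(\Omega)}\leq M_1$ with $M_1$ independent of $\epsilon$. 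With this in hand I would write the equation as $-{\rm div}\,\mathbf{a}_\epsilon(z,Dy_\epsilon)=f_0(z,y_\epsilon)$, whose $z$-dependence enters only through the fixed Hölder field $Du_0$, and invoke the interior and boundary gradient estimates of Lieberman \cite{19} to produce $\beta\in(0,1)$ and $M_2>0$, independent of $\epsilon$, with $y_\epsilon\in C^{1,\beta}_0(\overline{\Omega})$ and $\|y_\epsilon\|_{C^{1,\beta}_0(\overline{\Omega})}\leq M_2$. Then, by the compactness of $C^{1,\beta}_0(\overline{\Omega})\hookrightarrow C^1_0(\overline{\Omega})$, along a sequence $\epsilon_n\downarrow0$ we have $y_{\epsilon_n}\to\hat y$ in $C^1_0(\overline{\Omega})$; but $\|y_{\epsilon_n}-u_0\|\leq\epsilon_n\to0$ forces $\hat y=u_0$, so $y_{\epsilon_n}\to u_0$ in $C^1_0(\overline{\Omega})$, whence $\|y_{\epsilon_n}-u_0\|_{C^1_0(\overline{\Omega})}\leq\rho_0$ for large $n$ and therefore $\varphi_0(u_0)\leq\varphi_0(y_{\epsilon_n})=m_{\epsilon_n}<\varphi_0(u_0)$, a contradiction. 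Hence $u_0$ is a local $W^{1,p}_{0}(\Omega)$-minimizer of $\varphi_0$.

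I expect the $\epsilon$-uniform $C^{1,\beta}$ estimate to be the only real obstacle: the principal part $\xi\mapsto|\xi|^{p-2}\xi+\xi+\mu_\epsilon|\xi-Du_0(z)|^{p-2}(\xi-Du_0(z))$ is non-homogeneous (a $(p,2)$-type operator) and carries the multiplier $\mu_\epsilon$, which is not a priori bounded, so one must verify that it still fits Lieberman's structure conditions with constants independent of $\epsilon$. Everything else --- weak lower semicontinuity, the Lagrange multiplier rule, the De Giorgi iteration, and the compact embedding --- is routine. This is precisely the content, in greater generality, of the result of Aizicovici, Papageorgiou and Staicu \cite{2} quoted in the statement.
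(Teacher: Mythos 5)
The paper does not actually prove Proposition~\ref{prop4}: it cites it as a special case of Aizicovici, Papageorgiou and Staicu \cite{2}, whose argument is the Brezis--Nirenberg scheme (refined for quasilinear operators by Garc\'ia Azorero, Peral Alonso and Manfredi) that you follow. Your outline is the correct one --- identify $u_0$ as a weak solution, bootstrap to $C^{1,\alpha}_0(\overline{\Omega})$ via Moser/De~Giorgi and Lieberman \cite{19}, then argue by contradiction: minimize over small $W^{1,p}_{0}$-balls $\bar B_\epsilon$ around $u_0$, obtain the Lagrange multiplier equation for the minimizer $y_\epsilon$, derive $\epsilon$-uniform $C^{1,\beta}_0$ bounds, and conclude that $y_\epsilon\to u_0$ in $C^1_0(\overline{\Omega})$, contradicting the $C^1$-local minimality.

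The gap, which you flag but do not close, is precisely the $\epsilon$-uniform $C^{1,\beta}$ estimate in the presence of the multiplier $\mu_\epsilon\geq 0$, and it is the technical heart of the cited result, not a routine verification. Testing $\varphi_0'(y_\epsilon)=-\mu_\epsilon A_p(y_\epsilon-u_0)$ only yields $\mu_\epsilon\,\epsilon^{p-1}=\|\varphi_0'(y_\epsilon)\|_*$, and since $N_{f_0}$ has no modulus of continuity this gives at best $\mu_\epsilon=o(\epsilon^{1-p})$, which is not bounded. Consequently the field $\xi\mapsto|\xi|^{p-2}\xi+\xi+\mu_\epsilon|\xi-Du_0(z)|^{p-2}(\xi-Du_0(z))$ does not satisfy Lieberman's structure conditions with $\epsilon$-independent constants in the variable $y_\epsilon$: if $\mu_\epsilon\to\infty$, its ellipticity degenerates at $\xi=Du_0(z)$. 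The standard way around is to pass to the difference $w_\epsilon=y_\epsilon-u_0$ and normalize by $1+\mu_\epsilon$. Setting $\theta_\epsilon=(1+\mu_\epsilon)^{-1}\in(0,1]$, one finds that $w_\epsilon$ solves
$$-\,{\rm div}\,\tilde{\mathbf a}_\epsilon(z,Dw_\epsilon)=\theta_\epsilon f_0(z,u_0+w_\epsilon),\qquad
\tilde{\mathbf a}_\epsilon(z,\eta)=\theta_\epsilon\bigl[\,|\eta+Du_0(z)|^{p-2}(\eta+Du_0(z))+(\eta+Du_0(z))\,\bigr]+(1-\theta_\epsilon)|\eta|^{p-2}\eta,$$
and one must check that $\tilde{\mathbf a}_\epsilon$ satisfies Lieberman's growth, ellipticity and H\"older-in-$z$ conditions (the $z$-dependence entering only through $Du_0\in C^{0,\alpha}$) with constants uniform in $\theta_\epsilon\in(0,1]$: the two $p$-Laplacian pieces, one centered at $\eta=0$ and one at $\eta=-Du_0(z)$, together control $|\eta|^{p-2}$ on all scales while the right-hand side carries the damping factor $\theta_\epsilon\le 1$. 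Without spelling out this normalization-and-shift step, the proposed proof is incomplete; with it, your scheme is exactly the one of \cite{2}.
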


Finally, we recall some basic definitions and facts from Morse theory (critical groups) which we will use in the sequel.

So, let $X$ be a Banach space, $\varphi\in C^1(X,\RR)$ and $c\in\RR$. We introduce the following sets:
\begin{eqnarray*}
	&&K_{\varphi}=\{u\in X:\varphi'(u)=0\},\\
	&&K^c_{\varphi}=\{u\in K_{\varphi}:\varphi(u)=c\},\\
	&&\varphi^c=\{u\in X:\varphi(u)\leq c\}.
\end{eqnarray*}

Let $(Y_1,Y_2)$ be a topological pair such that $Y_2\subseteq Y_1\subseteq X$ and $k\in\NN_0$. By $H_k(Y_1,Y_2)$ we denote the $k$th relative singular homology group with integer coefficients for the pair $(Y_1,Y_2)$. Given an isolated $u\in K^c_{\varphi}$, the critical groups of $\varphi$ at $u$ are defined by
$$C_k(\varphi,u)=H_k(\varphi^c\cap U,\varphi^c\cap U\backslash\{u\})\ \mbox{for all}\ k\in\NN_0,$$
where $U$ is a neighborhood of $u$ such that $K_{\varphi}\cap \varphi^c\cap U=\{u\}$. The excision property of singular homology implies that the above definition of critical groups is independent of the particular choice of the neighborhood $U$.

Suppose that $\varphi$ satisfies the C-condition and $\inf\varphi(K_{\varphi})>-\infty$. Let $c<\inf\varphi(K_{\varphi})$. The critical groups of $\varphi$ at infinity are defined by
$$C_k(\varphi,\infty)=H_k(X,\varphi^c)\ \mbox{for all}\ k\in\NN_0.$$

The second deformation theorem (see, for example, Gasinski and Papageorgiou \cite[p. 628]{15}), implies that this definition is independent of the choice of the level $c<\inf\varphi(K_{\varphi})$.

In the next section we prove an existence theorem under conditions of resonance both at $\pm\infty$ and at zero.

\section{Existence of Nontrivial Solutions}

The hypotheses on the reaction term $f(z,x)$ are the following:

\smallskip
$H_1:$ $f:\Omega\times\RR\rightarrow\RR$ is a Carath\'eodory function such that
\begin{itemize}
	\item[(i)] for every $\rho>0$, there exists $a_{\rho}\in L^{\infty}(\Omega)_+$ such that
	$$|f(z,x)|\leq a_{\rho}(z)\ \mbox{for almost all}\ z\in\Omega,\ \mbox{all}\ |x|\leq\rho;$$
	\item[(ii)] there exists an integer $m\geq 1$ such that
	$$\lim\limits_{x\rightarrow\pm\infty}\frac{f(z,x)}{|x|^{p-2}x}=\hat{\lambda}_m(p)\ \mbox{uniformly for almost all}\ z\in\Omega;$$
	\item[(iii)] there exists $\tau\in(2,p)$ such that
	$$0<\beta_0\leq\liminf\limits_{x\rightarrow\pm\infty}\frac{f(z,x)x-pF(z,x)}{|x|^{\tau}}\ \mbox{uniformly for almost all}\ z\in\Omega,$$
	where $F(z,x)=\int^x_0f(z,s)ds$;
	\item[(iv)] there exist integer $l\geq 1$ with $d_l\neq m$ $(d_l={\rm dim}\,\bar{H}_l)$, $\delta>0$ and $\eta\in L^{\infty}(\Omega)$ such that
	\begin{eqnarray*}
		&&\hat{\lambda}_l(2)\leq\eta(z)\ \mbox{for almost all}\ z\in\Omega,\ \eta\not\equiv\hat{\lambda}_l(2),\\
		&&\eta(z)x^2\leq f(z,x)x\leq\hat{\lambda}_{l+1}x^2\ \mbox{for almost all}\ z\in\Omega,\ \mbox{all}\ |x|\leq\delta
	\end{eqnarray*}
	and for every $x\neq 0$ the second inequality is strict on a subset of positive Lebesgue measure.
\end{itemize}
\begin{remark}
	Hypothesis $H_1(ii)$ says that asymptotically as $x\rightarrow\pm\infty$, we have resonance with respect to some variational eigenvalue of $(-\Delta_p,W^{1,p}_{0}(\Omega))$. Similarly, hypothesis $H_1(iv)$ permits resonance at zero with respect to the eigenvalue $\hat{\lambda}_{l+1}(2)$ of $(-\Delta,H^1_0(\Omega))$. So, in a sense, we have a double resonance setting.
\end{remark}

Let $\varphi:H^1(\Omega)\rightarrow\RR$ be the energy (Euler) functional for problem (\ref{eq1}) defined by
$$\varphi(u)=\frac{1}{p}||Du||^p_p+\frac{1}{2}||Du||^2_2-\int_{\Omega}F(z,u)dz\ \mbox{for all}\ u\in W^{1,p}_{0}(\Omega).$$
\begin{prop}\label{prop5}
	If hypotheses $H_1(i),(ii),(iii)$ hold, then $\varphi$ satisfies the C-condition.
\end{prop}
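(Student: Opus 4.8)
The plan is to verify the Cerami condition in two stages: first show that any Cerami sequence $\{u_n\}_{n\ge1}\subseteq W^{1,p}_0(\Omega)$ is bounded, and then upgrade boundedness to strong convergence along a subsequence via the $(S)_+$-property of $A_p$. For the strong convergence step, once $\{u_n\}$ is bounded we may assume $u_n\stackrel{w}{\rightarrow}u$ in $W^{1,p}_0(\Omega)$ and $u_n\rightarrow u$ in $L^p(\Omega)$ (Sobolev compact embedding). Testing $\varphi'(u_n)$ with $u_n-u$ gives $\langle A_p(u_n),u_n-u\rangle+\langle A(u_n),u_n-u\rangle-\int_\Omega f(z,u_n)(u_n-u)\,dz\to 0$; the Nemytskii term vanishes by $H_1(i)$ together with the $L^p$-convergence and a standard growth bound coming from $H_1(ii)$, and the $A(u_n)$ term is controlled because $\{\|Du_n\|_2\}$ stays bounded (it is dominated by $\|u_n\|$). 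Hence $\limsup_n\langle A_p(u_n),u_n-u\rangle\le 0$, and Proposition~\ref{prop3} yields $u_n\rightarrow u$ in $W^{1,p}_0(\Omega)$.

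The heart of the matter — and the step I expect to be the main obstacle — is the boundedness of the Cerami sequence, since the problem is resonant at $\pm\infty$ with respect to $\hat\lambda_m(p)$, so the standard coercivity argument fails and one must exploit the finer condition $H_1(iii)$. The plan is to argue by contradiction: suppose $\|u_n\|\to\infty$ and set $y_n=u_n/\|u_n\|$, so $\|y_n\|=1$ and, passing to a subsequence, $y_n\stackrel{w}{\rightarrow}y$ in $W^{1,p}_0(\Omega)$, $y_n\rightarrow y$ in $L^p(\Omega)$ and pointwise a.e. Dividing the equation $\langle\varphi'(u_n),h\rangle$ by $\|u_n\|^{p-1}$ and using $H_1(ii)$ (which gives $N_f(u_n)/\|u_n\|^{p-1}\stackrel{w}{\rightarrow}\hat\lambda_m(p)|y|^{p-2}y$ in an appropriate $L^{p'}$ sense, with the $\Delta u$ contribution $\|Du_n\|_2^{2}/\|u_n\|^{p}$ washing out because $2<p$), one deduces that $A_p(y_n)\to\hat\lambda_m(p)|y|^{p-2}y$ strongly in $W^{-1,p'}(\Omega)$; applying the $(S)_+$-property once more gives $y_n\rightarrow y$ in $W^{1,p}_0(\Omega)$, so $\|y\|=1$ and $y$ is an $L^p$-normalized eigenfunction of $(-\Delta_p,W^{1,p}_0(\Omega))$ with eigenvalue $\hat\lambda_m(p)$; in particular $y(z)\ne 0$ for a.e.\ $z\in\Omega$ (eigenfunctions vanish only on a null set, or at least $|y|\not\equiv 0$ a.e., which is all we need).

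To reach the contradiction I would use the quantity $\varphi(u_n)-\tfrac1p\langle\varphi'(u_n),u_n\rangle$. A direct computation gives
$$\varphi(u_n)-\frac1p\langle\varphi'(u_n),u_n\rangle=\left(\frac1p-\frac12\right)\|Du_n\|_2^2+\frac1p\int_\Omega\bigl[f(z,u_n)u_n-pF(z,u_n)\bigr]\,dz.$$
Since $\{\varphi(u_n)\}$ is bounded and $(1+\|u_n\|)\varphi'(u_n)\to0$ forces $\langle\varphi'(u_n),u_n\rangle\to0$, the left-hand side is bounded; as $2<p$ the term $(\tfrac1p-\tfrac12)\|Du_n\|_2^2$ is bounded above by $0$, so $\int_\Omega[f(z,u_n)u_n-pF(z,u_n)]\,dz$ is bounded above. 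On the other hand, using $H_1(iii)$ together with $|u_n(z)|=\|u_n\|\,|y_n(z)|\to\infty$ for a.e.\ $z$ in the set $\{y\ne 0\}$ (which has full measure), Fatou's lemma applied to the nonnegative-after-shift integrand $f(z,u_n)u_n-pF(z,u_n)+C$ gives $\int_\Omega[f(z,u_n)u_n-pF(z,u_n)]\,dz\to+\infty$, a contradiction. The technical care here is in (i) justifying the Fatou application — one writes $f(z,x)x-pF(z,x)\ge -c_2$ for all $x$ from $H_1(i)$ and $H_1(iii)$, so the shifted integrand is nonnegative — and (ii) confirming via the liminf in $H_1(iii)$ that the pointwise liminf of $[f(z,u_n(z))u_n(z)-pF(z,u_n(z))]/1$ is $+\infty$ on $\{y\ne0\}$ (indeed it is $\ge\beta_0|u_n(z)|^\tau\to+\infty$ eventually). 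This contradiction shows $\{u_n\}$ is bounded, completing the proof.
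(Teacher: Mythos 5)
Your argument follows the same broad route as the paper's proof: argue by contradiction, renormalize $y_n=u_n/\|u_n\|$, use the $(S)_+$-property to obtain $y_n\to y$ strongly with $\|y\|=1$, and then contradict the bound on $\int_\Omega[f(z,u_n)u_n-pF(z,u_n)]\,dz$ that the Cerami condition provides. Before that comparison, one correction is needed. The coefficient in your identity is wrong: a direct computation gives
\begin{equation*}
\varphi(u_n)-\frac{1}{p}\left\langle\varphi'(u_n),u_n\right\rangle=\left(\frac{1}{2}-\frac{1}{p}\right)\|Du_n\|_2^2+\frac{1}{p}\int_\Omega\bigl[f(z,u_n)u_n-pF(z,u_n)\bigr]\,dz,
\end{equation*}
and since $p>2$ the coefficient $\frac{1}{2}-\frac{1}{p}$ is positive, not negative. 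This is precisely what you want: the $\|Du_n\|_2^2$-term is nonnegative and can therefore be discarded, yielding the upper bound on $\int_\Omega[f(z,u_n)u_n-pF(z,u_n)]\,dz$. As you wrote it, with coefficient $\frac{1}{p}-\frac{1}{2}<0$, dropping that term would only give a \emph{lower} bound on the integral, which is the wrong direction and would leave the proof without a contradiction. So the slip is fixable, but as stated the inference ``hence the integral is bounded above'' does not follow.

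Beyond that, your argument contains some unnecessary machinery. Identifying $y$ as a $\hat\lambda_m(p)$-eigenfunction, and then appealing to the fact that $p$-Laplacian eigenfunctions vanish only on a null set, is not needed: for the Fatou step you only need $|\{y\neq 0\}|>0$, and this is automatic from $\|y\|=1$. Correspondingly, in the paper's proof of this proposition one only uses that $\{N_f(u_n)/\|u_n\|^{p-1}\}$ is bounded in $L^{p'}(\Omega)$ (to pass to the limit in $\langle A_p(y_n),y_n-y\rangle$); the weak limit identification $N_f(u_n)/\|u_n\|^{p-1}\rightharpoonup\hat\lambda_m(p)|y|^{p-2}y$ is reserved for Proposition \ref{prop6}. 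Finally, the last step is carried out slightly differently: the paper integrates the lower bound $\beta_1|x|^\tau-c_2\leq f(z,x)x-pF(z,x)$ to get $\|u_n\|_\tau^\tau\leq c_3(1+\|Du_n\|_2^2)$ and then divides by $\|u_n\|^\tau$ to conclude $\|y_n\|_\tau\to 0$, hence $y=0$, contradicting $\|y\|=1$; your Fatou argument (showing $\int_\Omega[f(z,u_n)u_n-pF(z,u_n)]\,dz\to+\infty$) contradicts the same upper bound and is equally valid, if slightly heavier.
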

\begin{proof}
	Let $\{u_n\}_{n\geq 1}\subseteq W^{1,p}_{0}(\Omega)$ be a sequence such that
	\begin{eqnarray}
		&&|\varphi(u_n)|\leq M_1\ \mbox{for some}\ M_1>0,\ \mbox{all}\ n\in\NN,\label{eq6}\\
		&&(1+||u_n||)\varphi'(u_n)\rightarrow 0\ \mbox{in}\ W^{-1,p'}(\Omega)=W^{1,p}_{0}(\Omega)^*.\label{eq7}
	\end{eqnarray}
	
	By (\ref{eq7}) we have
	\begin{eqnarray}\label{eq8}
		&&\left|\left\langle A_p(u_n),h\right\rangle+\left\langle A(u_n),h\right\rangle-\int_{\Omega}f(z,u_n)hdz\right|\leq\frac{\epsilon_n||h||}{1+||u_n||}\\
		&&\mbox{for all}\ h\in W^{1,p}_{0}(\Omega)\ \mbox{with}\ \epsilon_n\rightarrow 0^+.\nonumber
	\end{eqnarray}
	
	In (\ref{eq8}) we choose $h=u_n\in W^{1,p}_{0}(\Omega)$ and obtain
	\begin{equation}\label{eq9}
		-||Du_n||^p_p-||Du_n||^2_2+\int_{\Omega}f(z,u_n)u_ndz\leq\epsilon_n\ \mbox{for all}\ n\in\NN\,.
	\end{equation}
	
	On the other hand, from (\ref{eq6}) we have
	\begin{equation}\label{eq10}
		||Du_n||^p_p+\frac{p}{2}||Du_n||^2_2-\int_{\Omega}pF(z,u_n)dz\leq pM_1\ \mbox{for all}\ n\in\NN.
	\end{equation}
	
	We add (\ref{eq9}) and (\ref{eq10}) and obtain
	\begin{eqnarray}\label{eq11}
		&&\int_{\Omega}[f(z,u_n)u_n-pF(z,u_n)]dz\leq M_2+\left(1-\frac{p}{2}\right)||Du_n||^2_2\\
		&&\mbox{for some}\ M_2>0,\ \mbox{all}\ n\in\NN\,.\nonumber
	\end{eqnarray}
	
	Hypotheses $H_1(i),(iii)$ imply that we can find $\beta_1\in(0,\beta_0)$ and $c_2>0$ such that
	\begin{equation}\label{eq12}
		\beta_1|x|^{\tau}-c_2\leq f(z,x)x-pF(z,x)\ \mbox{for almost all}\ z\in\Omega,\ \mbox{all}\ x\in\RR\,.
	\end{equation}
	
	Returning to (\ref{eq11}) and using (\ref{eq12}), we have
	\begin{equation}\label{eq13}
		||u_n||^{\tau}_{\tau}\leq c_3(1+||Du_n||^2_2)\ \mbox{for some}\ c_3>0,\ \mbox{all}\ n\in\NN\ (\mbox{recall that}\ \tau >2.)
	\end{equation}

	\begin{claim}
		$\{u_n\}_{n\geq 1}\subseteq W^{1,p}_{0}(\Omega)$ is bounded.
	\end{claim}
	
	Arguing by contradiction, suppose that the claim is not true. By passing to a subsequence if necessary, we have
	\begin{equation}\label{eq14}
		||u_n||\rightarrow\infty\,.
	\end{equation}
	
	Let $y_n=\frac{u_n}{||u_n||},\ n\in\NN$. Then $||y_n||=1$ for all $n\in\NN$ and so we may assume that
	\begin{equation}\label{eq15}
		y_n\stackrel{w}{\rightarrow}y\ \mbox{in}\ W^{1,p}_{0}(\Omega)\ \mbox{and}\ y_n\rightarrow y\ \mbox{in}\ L^p(\Omega).
	\end{equation}
	
	From (\ref{eq8}) we have
\begin{eqnarray}\label{eq16}
	&&\left|\left\langle A_p(y_n),h\right\rangle+\frac{1}{||u_n||^{p-2}}\left\langle A(y_n),h\right\rangle-\int_{\Omega}\frac{N_f(u_n)}{||u_n||^{p-1}}hdz\right|\leq\frac{\epsilon_n||h||}{(1+||u_n||)||u_n||^{p-1}}\\
	&&\mbox{for all}\ n\in\NN\,.\nonumber
\end{eqnarray}

Hypotheses $H_1(i),(ii)$ imply that
\begin{eqnarray}
	&&|f(z,x)|\leq c_4(1+|x|^{p-1})\ \mbox{for almost all}\ z\in\Omega,\ \mbox{all}\ x\in\RR,\ \mbox{some}\ c_4>0,\label{eq17}\\
	&\Rightarrow&\left\{\frac{N_f(u_n)}{||u_n||^{p-1}}\right\}_{n\geq 1}\subseteq L^{p'}(\Omega)\ \mbox{is bounded}.\label{eq18}
\end{eqnarray}

In (\ref{eq16}) we choose $h=y_n-y\in W^{1,p}_{0}(\Omega)$, pass to the limit as $n\rightarrow\infty$ and use (\ref{eq14}), (\ref{eq15}), (\ref{eq18}) and the fact that $p>2$. Then
\begin{eqnarray}\label{eq19}
	&&\lim\limits_{n\rightarrow\infty}\left\langle A_p(y_n),y_n-y\right\rangle=0,\nonumber\\
	&\Rightarrow&y_n\rightarrow y\ \mbox{in}\ W^{1,p}_{0}(\Omega)\ \mbox{(see Proposition \ref{prop3}), hence}\ ||y||=1.
\end{eqnarray}

From (\ref{eq13}) we have
\begin{eqnarray*}
	 &&||y_n||^{\tau}_{\tau}\leq\frac{c_3}{||u_n||^{\tau}}+\frac{c_3}{||u_n||^{\tau-2}}||Dy_n||^2_2\leq\frac{c_5}{||u_n||^{\tau-2}}\ \mbox{for some}\ c_5>0,\ \mbox{all}\ n\geq n_0\geq 1,\\
	&\Rightarrow&y_n\rightarrow 0\ \mbox{in}\ L^{\tau}(\Omega)\ \mbox{as}\ n\rightarrow\infty\ (\mbox{see (\ref{eq14}) and recall that}\ \tau>2),\\
	&\Rightarrow&y=0\ \mbox{(see (\ref{eq15})), a contradiction to (\ref{eq19})}.
\end{eqnarray*}

This proves the claim.

Because of Claim 1 we may assume that
\begin{equation}\label{eq20}
	u_n\stackrel{w}{\rightarrow}u\ \mbox{in}\ W^{1,p}_{0}(\Omega)\ \mbox{and}\ u_n\rightarrow u\ \mbox{in}\ L^p(\Omega).
\end{equation}

From (\ref{eq17}) we see that
\begin{equation}\label{eq21}
	\{N_f(u_n)\}_{n\geq 1}\subseteq L^{p'}(\Omega)\ \mbox{is bounded}.
\end{equation}

So, if in (\ref{eq8}) we choose $h=y_n-y\in W^{1,p}_{0}(\Omega)$, pass to the limit as $n\rightarrow\infty$ and use (\ref{eq20}), (\ref{eq21}), then
\begin{eqnarray*}
	&&\lim\limits_{n\rightarrow\infty}\left[\left\langle A_p(u_n),u_n-u\right\rangle+\left\langle A(u_n),u_n-u\right\rangle\right]=0,\\
	&\Rightarrow&\limsup\limits_{n\rightarrow\infty}\left[\left\langle A_p(u_n),u_n-u\right\rangle+\left\langle A(u),u_n-u\right\rangle\right]\leq 0\\
	&&(\mbox{since}\ A(\cdot)\ \mbox{is monotone})\\
	&\Rightarrow&\limsup\limits_{n\rightarrow\infty}\left\langle A_p(u_n),u_n-u\right\rangle\leq 0\ (\mbox{see (\ref{eq20})}),\\
	&\Rightarrow&u_n\rightarrow u\ \mbox{in}\ W^{1,p}_{0}(\Omega)\ (\mbox{see Proposition \ref{prop3}}),\\
	&\Rightarrow&\varphi\ \mbox{satisfies the C-condition.}
\end{eqnarray*}
\end{proof}

We can have two approaches in the proof of the existence theorem. We present both, because  we believe that the particular tools used in each of them are of independent interest and can be used in different circumstances.

In the first approach we compute directly the critical groups at infinity of the energy functional $\varphi$. Note that Proposition \ref{prop5} permits this fact.

\begin{prop}\label{prop6}
	If hypotheses $H_1(i),(ii),(iii)$ hold, then $C_m(\varphi,\infty)\neq 0$.
\end{prop}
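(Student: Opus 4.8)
The plan is to compute $C_m(\varphi,\infty)$ directly, by identifying the homotopy type of the deep sublevel sets $\varphi^c$ as $c\to-\infty$. First note this object is well defined: Proposition~\ref{prop5} gives the C-condition, and testing $\varphi'(u)=0$ with $h=u$ and using (\ref{eq12}) shows that for every $u\in K_\varphi$,
$$\varphi(u)=\frac{1}{p}\int_\Omega\big[f(z,u)u-pF(z,u)\big]\,dz+\Big(\frac{1}{2}-\frac{1}{p}\Big)\|Du\|_2^2\ge-\frac{c_2}{p}|\Omega|_N$$
(here $p>2$), so $\inf\varphi(K_\varphi)>-\infty$ and $C_k(\varphi,\infty)=H_k(W^{1,p}_0(\Omega),\varphi^c)$ for any $c<\inf\varphi(K_\varphi)$.

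The next ingredient is the asymptotics of $F$. From $H_1(i),(ii)$ we have (\ref{eq17}) and, by integration, $pF(z,x)/|x|^p\to\hat\lambda_m(p)$ uniformly in $z$ as $|x|\to\infty$. The quantitative information at resonance comes from the identity $\frac{d}{dt}\big[t^{-p}F(z,tx)\big]=t^{-p-1}\big[f(z,tx)(tx)-pF(z,tx)\big]$: since $\tau<p$, integrating it from $t$ to $+\infty$ and using $H_1(i),(iii)$ gives $\big|F(z,x)-\tfrac{1}{p}\hat\lambda_m(p)|x|^p\big|\le c_6(1+|x|^\tau)$ together with a matching one-sided bound of order $|x|^\tau$, with the sign fixed by $H_1(iii)$. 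Consequently, along any ray $t\mapsto tu$ on which $\|Du\|_p^p=\hat\lambda_m(p)\|u\|_p^p$ the $t^p$-terms of $\varphi(tu)$ cancel and $\varphi(tu)$ is asymptotic to a fixed nonzero multiple of $t^\tau\|u\|_\tau^\tau$; hence $\varphi$ is eventually monotone and unbounded along every resonant ray, in the direction dictated by $H_1(iii)$.

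For the ray dichotomy and the deformation, one uses, for $u\neq0$ and $t>0$, the identity
$$t\,\langle\varphi'(tu),u\rangle-p\,\varphi(tu)=\Big(1-\frac{p}{2}\Big)t^2\|Du\|_2^2-\int_\Omega\big[f(z,tu)(tu)-pF(z,tu)\big]\,dz,$$
and deduces from $p>2$ and (\ref{eq12}) that for $c$ sufficiently negative one has $\frac{d}{dt}\varphi(tu)<0$ whenever $t>0$ and $\varphi(tu)\le c$. Thus, for such $c$, along each ray $\{t>0:\varphi(tu)\le c\}$ is either empty or a closed half-line $[\sigma_c(u),+\infty)$ with $\frac{d}{dt}\varphi$ strictly negative at $\sigma_c(u)$, so $\sigma_c$ is continuous where it is finite (implicit function theorem). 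Combining this with the asymptotics above — and noting that $\tfrac{1}{2}\|Du\|_2^2\ge0$ is of lower order ($2<p$) and therefore does not alter the classification, only helping $\varphi$ stay bounded below on the other rays — the rays along which $\sigma_c$ is finite form precisely the symmetric ``subresonant'' sphere $\mathcal{U}:=\{u\in W^{1,p}_0(\Omega):\|u\|_p=1,\ \|Du\|_p^p\le\hat\lambda_m(p)\}$ (resonant directions included), while $\varphi$ is bounded below uniformly on the cone over the complement, so $\varphi^c$ meets only the subresonant cone. A radial deformation $(s,tu)\mapsto\big((1-s)t+s\,\widetilde{\sigma}_c(u)\big)u$ with $\widetilde{\sigma}_c\ge\sigma_c$ a suitable continuous choice then strongly deformation-retracts $\varphi^c$ onto a homeomorphic copy of $\mathcal{U}$. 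From the long exact sequence of the pair and the contractibility of $W^{1,p}_0(\Omega)$ we get $C_k(\varphi,\infty)\cong\widetilde{H}_{k-1}(\mathcal{U})$ for all $k\ge1$. Finally, $\mathcal{U}$ is a sublevel set of the $p$-Rayleigh quotient on the $L^p$-unit sphere, and the variational characterization of $\hat\lambda_m(p)$ via the Fadell--Rabinowitz cohomological index (Perera~\cite{29}) — through the monotonicity and continuity of that index, comparing $\mathcal{U}$ with the standard model $S^{m-1}$ carrying the antipodal $\mathbb{Z}_2$-action — gives $\widetilde{H}_{m-1}(\mathcal{U})\neq0$, whence $C_m(\varphi,\infty)\cong\widetilde{H}_{m-1}(\varphi^c)\neq0$.

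I expect the main obstacle to be the deformation step: ensuring $\sigma_c$ (hence the retraction) is continuous uniformly over all of $\mathcal{U}$, and in particular near the degenerate resonant locus $\{\|Du\|_p^p=\hat\lambda_m(p)\|u\|_p^p\}$, where the dominant $t^p$-term of $\varphi(tu)$ disappears and strict monotonicity must be wrung out of the $t^\tau$-control of $H_1(iii)$ alone, which forces the use of the compact embedding $W^{1,p}_0(\Omega)\hookrightarrow L^\tau(\Omega)$ (valid since $\tau<p$) to keep all estimates uniform; the coefficient- and degree-bookkeeping in passing from the cohomological index of $\mathcal{U}$ to $\widetilde{H}_{m-1}(\varphi^c)$ is a secondary point. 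A technically lighter route is to first invoke homotopy invariance of the critical groups at infinity along $\varphi_t(u)=\frac{1}{p}\|Du\|_p^p+\frac{t}{2}\|Du\|_2^2-\int_\Omega F(z,u)\,dz$, $t\in[0,1]$ — rerunning the estimates behind Proposition~\ref{prop5} uniformly in $t$ so that $\bigcup_{t\in[0,1]}K_{\varphi_t}$ stays bounded and no critical value escapes to infinity — which reduces the computation to the pure $p$-Laplacian functional $\varphi_0$ and to the by now standard value of $C_m(\varphi_0,\infty)$.
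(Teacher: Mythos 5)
Your main route has a genuine gap, and your ``lighter'' alternative is circular. On the main route: the sign in $H_1(iii)$ works against you. Integrating $\frac{d}{dt}\bigl[t^{-p}F(z,tx)\bigr]=t^{-p-1}\bigl[f(z,tx)(tx)-pF(z,tx)\bigr]$ from $t$ to $+\infty$ and using $f(z,x)x-pF(z,x)\ge\beta_1|x|^{\tau}-c_2$ gives an \emph{upper} bound $F(z,x)\le\frac{1}{p}\hat\lambda_m(p)|x|^p-c'|x|^{\tau}+c''$; along a resonant ray ($\|u\|_p=1$, $\|Du\|_p^p=\hat\lambda_m(p)$) the $t^p$-terms cancel and $\varphi(tu)\ge\frac{t^2}{2}\|Du\|_2^2+c't^{\tau}\|u\|_{\tau}^{\tau}-c'''\to+\infty$. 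So the set on which $\sigma_c$ is finite is the \emph{open} strictly subresonant set $\{\|u\|_p=1,\ \|Du\|_p^p<\hat\lambda_m(p)\}$, not your closed $\mathcal{U}$: resonant directions are excluded, $\sigma_c$ blows up at the boundary, and the Fadell--Rabinowitz comparison at the end wants the closed sublevel set. This is not the secondary issue you flag; it changes the object whose cohomology you must compute. As for the alternative route: deforming along $\varphi_t(u)=\frac{1}{p}\|Du\|_p^p+\frac{t}{2}\|Du\|_2^2-\int_\Omega F(z,u)\,dz$ lands you on the \emph{resonant} pure $p$-Laplacian functional $\varphi_0$, and $C_m(\varphi_0,\infty)$ is precisely what must still be established. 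Perera's Proposition~1.1 computes $C_m(\psi,0)$ for the \emph{nonresonant} $\psi(u)=\frac{1}{p}\|Du\|_p^p-\frac{\lambda}{p}\|u\|_p^p$ at the origin; it does not hand you the critical groups at infinity of a resonant functional for free.

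The step you are missing is what the paper actually does: homotope $\varphi$ directly to $\psi(u)=\frac{1}{p}\|Du\|_p^p-\frac{\lambda}{p}\|u\|_p^p$ with a fixed $\lambda\in(\hat\lambda_m(p),\hat\lambda_{m+1}(p))\setminus\hat\sigma(p)$, thereby removing the Laplacian term and the resonance simultaneously. The technical core is a uniform lower bound on $(1+\|u\|)\|h_u'(t,u)\|_*$ on deep sublevels of the convex homotopy $h(t,\cdot)=(1-t)\varphi+t\psi$: assume failure, normalize $y_n=u_n/\|u_n\|$, obtain $y_n\to y$ with $\|y\|=1$ solving $-\Delta_p y=\lambda_t|y|^{p-2}y$ with $\lambda_t=(1-t)\hat\lambda_m(p)+t\lambda$, and in the delicate case $\lambda_t\in\hat\sigma(p)$ play off $H_1(iii)$ and $2<\tau<p$ against each other to reach a contradiction on the scaled quantity $\|u_n\|^{-\tau}\int_\Omega[f(z,u_n)u_n-pF(z,u_n)]\,dz$. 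Homotopy invariance of critical groups at infinity (Chang, or Liang--Su) then gives $C_k(\varphi,\infty)=C_k(\psi,\infty)=C_k(\psi,0)$ since $K_\psi=\{0\}$, and Perera's Proposition~1.1 supplies $C_m(\psi,0)\neq 0$.
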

\begin{proof}
	Let $\lambda\in(\hat{\lambda}_m(p),\hat{\lambda}_{m+1}(p))\backslash\hat{\sigma}(p)$ and consider the $C^2$-functional $\psi:W^{1,p}_{0}(\Omega)\rightarrow\RR$ defined by
	$$\psi(u)=\frac{1}{p}||Du||^p_p-\frac{\lambda}{p}||u||^p_p\ \mbox{for all}\ u\in W^{1,p}_{0}(\Omega).$$
	
	We consider the homotopy $h(t,u)$ defined by
	$$h(t,u)=(1-t)\varphi(u)+t\psi(u)\ \mbox{for all}\ (t,u)\in[0,1]\times W^{1,p}_{0}(\Omega).$$
	\begin{claim}
		There exist $\eta\in\RR$ and $\hat{\delta}>0$ such that
		$$h(t,u)\leq\eta\Rightarrow(1+||u||)||h'_u(t,u)||_*\geq\hat{\delta}\ \mbox{for all}\ t\in[0,1].$$
	\end{claim}
	
	We argue indirectly. So, suppose that the claim is not true. Since $h(\cdot,\cdot)$ maps bounded sets to bounded sets, we can find $\{t_n\}_{n\geq 1}\subseteq[0,1]$ and $\{u_n\}_{n\geq 1}\subseteq W^{1,p}_{0}(\Omega)$ such that
	\begin{equation}\label{eq22}
		t_n\rightarrow t,\, ||u_n||\rightarrow\infty,\, h(t_n,u_n)\rightarrow-\infty\ \mbox{and}\ (1+||u_n||)h'_u(t_n,u_n)\rightarrow 0\ \mbox{in}\ W^{-1,p'}(\Omega).
	\end{equation}
	
	From the last convergence in (\ref{eq22}), we have
	\begin{eqnarray}\label{eq23}
		&&\left|\left\langle A_p(u_n),h\right\rangle+(1-t_n)\left\langle A(u_n),h\right\rangle-(1-t_n)\int_{\Omega}f(z,u_n)hdz-t_n\int_{\Omega}\lambda|u_n|^{p-2}u_nhdz\right|\nonumber\\
		&&\leq\frac{\epsilon_n||h||}{1+||u_n||}\ \mbox{for all}\ h\in W^{1,p}_{0}(\Omega)\ \mbox{with}\ \epsilon_n\rightarrow 0^+.
	\end{eqnarray}
	
	Let $y_n=\frac{u_n}{||u_n||},\ n\in\NN$. Then $||y_n||=1$ for all $n\in\NN$ and so we may assume that
	\begin{equation}\label{eq24}
		y_n\stackrel{w}{\rightarrow}y\ \mbox{in}\ W^{1,p}_{0}(\Omega)\ \mbox{and}\ y_n\rightarrow y\ \mbox{in}\ L^p(\Omega).
	\end{equation}
	
	From (\ref{eq23}) we have
	\begin{eqnarray}\label{eq25}
		&&\left|\left\langle A_p(y_n),h\right\rangle+\frac{1-t_n}{||u_n||^{p-2}}\left\langle A(y_n),h\right\rangle-(1-t_n)\int_{\Omega}\frac{N_f(u_n)}{||u_n||^{p-1}}hdz-t_n\int_{\Omega}\lambda|y_n|^{p-2}y_nhdz\right|\nonumber\\
		&&\leq\frac{\epsilon_n||h||}{(1+||u_n||)||u_n||^{p-1}}\ \mbox{for all}\ n\in\NN\,.
	\end{eqnarray}
	
	From (\ref{eq17}) and (\ref{eq24}), we see that
	$$\left\{\frac{N_f(u_n)}{||u_n||^{p-1}}\right\}_{n\geq 1}\subseteq L^{p'}(\Omega)\ \mbox{is bounded}.$$
	
	Hence, by passing to a subsequence if necessary and using hypothesis $H_1(ii)$ we obtain
	\begin{equation}\label{eq26}
		\frac{N_f(u_n)}{||u_n||^{p-1}}\stackrel{w}{\rightarrow}\hat{\lambda}_m(p)|y|^{p-2}y\ \mbox{in}\ L^{p'}(\Omega)
	\end{equation}
	(see Filippakis and Papageorgiou \cite{14}).
	
	In (\ref{eq25}) we choose $h=y_n-y\in W^{1,p}_{0}(\Omega)$, pass to the limit as $n\rightarrow\infty$ and use (\ref{eq22}), (\ref{eq24}), (\ref{eq26}) and the fact that $2<p$. Then
	\begin{eqnarray}\label{eq27}
		&&\lim\limits_{n\rightarrow\infty}\left\langle A_p(y_n),y_n-y\right\rangle=0,\nonumber\\
		&\Rightarrow&y_n\rightarrow y\ \mbox{in}\ W^{1,p}_{0}(\Omega)\ \mbox{(see Proposition \ref{prop3}), so}\ ||y||=1.
	\end{eqnarray}
	
	We return to (\ref{eq25}), pass to the limit as $n\rightarrow\infty$ and use (\ref{eq26}), (\ref{eq27}). We obtain
	\begin{eqnarray}\label{eq28}
		&&\left\langle A_p(y),h\right\rangle=\int_{\Omega}\lambda_t|y|^{p-2}yhdz\ \mbox{for all}\ h\in W^{1,p}_{0}(\Omega),\ \mbox{with}\ \lambda_t=(1-t)\hat{\lambda}_m(p)+t\lambda\nonumber\\
		&\Rightarrow&-\Delta_py(z)=\lambda_t|y(z)|^{p-2}y(z)\ \mbox{for almost all}\ z\in\Omega,\ y|_{\partial\Omega}=0.
	\end{eqnarray}
	
	If $\lambda_t\notin\hat{\sigma}(p)$, then from (\ref{eq28}) it follows that $y=0$, a contradiction (see (\ref{eq27})).
	
	If $\lambda_t\in\hat{\sigma}(p)$, then for $E=\{z\in\Omega:y(z)\neq 0\}$ we have $|E|_N>0$. Hence
	\begin{eqnarray}\label{eq29}
		&&|u_n(z)|\rightarrow+\infty\ \mbox{for almost all}\ z\in\Omega,\nonumber\\
		 &\Rightarrow&\liminf\limits_{n\rightarrow\infty}\frac{f(z,u_n(z))u_n(z)-pF(z,u_n(z))}{|u_n(z)|^{\tau}}\geq\beta_0>0\ \mbox{for almost all}\ z\in E.
	\end{eqnarray}
	
	From (\ref{eq29}), hypothesis $H_1(iii)$ and Fatou's lemma, we have
	\begin{equation}\label{eq30}
		\liminf\limits_{n\rightarrow\infty}\frac{1}{||u_n||^{\tau}}\int_E[f(z,u_n)u_n-pF(z,u_n)]dz>0.
	\end{equation}
	
	Note that hypothesis $H_1(iii)$ imply that we can find $M_3>0$ such that
	\begin{equation}\label{eq31}
	f(z,x)x-	pF(z,x)\geq 0\ \mbox{for almost all}\ z\in\Omega,\ \mbox{all}\ |x|\geq M_3.
	\end{equation}
	
	Then we have
	\begin{eqnarray}\label{eq32}
		&&\frac{1}{||u_n||^{\tau}}\int_{\Omega}[f(z,u_n)u_n-pF(z,u_n)]dz\nonumber\\
		&=&\frac{1}{||u_n||^{\tau}}\int_{\Omega\cap\{|u_n|\geq M_3\}}[f(z,u_n)u_n-pF(z,u_n)]dz+\nonumber\\
		&&\hspace{1cm}\frac{1}{||u_n||^{\tau}}\int_{\Omega\cap\{|u_n|<M_3\}}[f(z,u_n)u_n-pF(z,u_n)]dz\nonumber\\
		&\geq&\frac{1}{||u_n||^{\tau}}\int_{E\cap\{|u_n|\geq M_3\}}[f(z,u_n)u_n-pF(z,u_n)]dz-\frac{c_6}{||u_n||^{\tau}}\nonumber\\
		&&\mbox{for some}\ c_6>0,\ \mbox{all}\ n\in\NN\ (\mbox{see (\ref{eq31}) and hypothesis}\ H_1(i))\nonumber\\
		&\geq&\frac{1}{||u_n||^{\tau}}\int_{E}[f(z,u_n)u_n-pF(z,u_n)]dz-\frac{c_7}{||u_n||^{\tau}}\ \mbox{for some}\ c_7>0,\ \mbox{all}\ n\in\NN\nonumber\\
		&&(\mbox{see hypothesis}\ H_1(i)),\nonumber\\
		\Rightarrow&&\liminf\limits_{n\rightarrow\infty}\frac{1}{||u_n||^{\tau}}\int_{\Omega}[f(z,u_n)u_n-pF(z,u_n)]dz>0\ (\mbox{see (\ref{eq30})}).
	\end{eqnarray}
	
	From the third convergence in (\ref{eq22}), we see that we can find $n_0\in\NN$ such that
	\begin{eqnarray}\label{eq33}
		 &&||Du_n||^p_p+\frac{(1-t_n)p}{2}||Du_n||^2_2-(1-t_n)\int_{\Omega}pF(z,u_n)dz-t_n\int_{\Omega}\lambda|u_n|^pdz\leq-1\\
		&&\mbox{for all}\ n\geq n_0.\nonumber
	\end{eqnarray}
	
	In (\ref{eq23}) we choose $h=u_n\in W^{1,p}_0(\Omega)$. Then
	\begin{equation}\label{eq34}
		 -||Du_n||^p_p-(1-t_n)||Du_n||^2_2+(1-t_n)\int_{\Omega}f(z,u_n)u_ndz+t_n\int_{\Omega}\lambda|u_n|^pdz\leq\epsilon_m\ \mbox{for all}\ n\in\NN\,.
	\end{equation}
	
	Since $\epsilon_n\rightarrow 0^+$, by choosing $n_0\in\NN$ even bigger if necessary, we can get
	\begin{equation}\label{eq35}
		\epsilon_n\in(0,1)\ \mbox{for all}\ n\geq n_0.
	\end{equation}
	
	We add (\ref{eq33}), (\ref{eq34}) and use (\ref{eq35}). Then
	$$(1-t_n)\int_{\Omega}[f(z,u_n)u_n-pF(z,u_n)]dz\leq(1-t_n)\left(1-\frac{p}{2}\right)||Du_n||^2_2.$$
	
	We may assume that $t_n\neq 1$ for all $n\in\NN$. Otherwise $t=1$ and so $\lambda_t=\lambda\notin\sigma(p)$, hence $y=0$, a contradiction to (\ref{eq27}). Then
	\begin{equation}\label{eq36}
		 \frac{1}{||u_n||^{\tau}}\int_{\Omega}[f(z,u_n)u_n-pF(z,u_n)]dz\leq\left(\frac{p}{2}-1\right)\frac{1}{||u_n||^{\tau-2}}||Dy_n||^2_2\ \mbox{for all}\ n\in\NN\,.
	\end{equation}
	
	Since $p>\tau>2$,  it follows from (\ref{eq22}) and (\ref{eq27}) that
	$$\limsup\limits_{n\rightarrow\infty}\int_{\Omega}[f(z,u_n)u_n-pF(z,u_n)]dz\leq 0,$$
	which contradicts (\ref{eq32}).
	
	This proves the claim.
	
\smallskip
	In fact the above argument with minor changes, shows that for every $t\in[0,1]$, $h(t,\cdot)$ satisfies the C-condition. So Theorem 5.1.12, p. 334, of Chang \cite{8} (see also Liang and Su \cite[Proposition 3.2]{18}) implies that
	\begin{eqnarray}\label{eq37}
		&&C_k(h(0,\cdot),\infty)=C_k(h(1,\cdot),\infty)\ \mbox{for all}\ k\in\NN_0,\nonumber\\
		&\Rightarrow&C_k(\varphi,\infty)=C_k(\psi,\infty)\ \mbox{for all}\ k\in\NN_0.
	\end{eqnarray}
	
	Since $\lambda\notin\hat{\sigma}(p)$, we have $K_{\psi}=\{0\}$ and so $C_k(\psi,\infty)=C_k(\psi,0)$ for all $k\in\NN_0$. Hence
	\begin{equation}\label{eq38}
		C_k(\varphi,\infty)=C_k(\psi,0)\ \mbox{for all}\ k\in\NN_0\ (\mbox{see (\ref{eq37})}).
	\end{equation}
	
	But by Proposition 1.1 of Perera \cite{29}, we have $C_m(\psi,0)\neq 0$. So,
	$$C_m(\varphi,\infty)\neq 0\ (\mbox{see (\ref{eq38})}).$$
\end{proof}

In the second approach we avoid the computation of the critical groups of $\varphi$ at infinity. Instead we use the following result which is essentially due to Perera \cite[Lemma 4.1]{29}, adapted to our setting here.
\begin{prop}\label{prop7}
	If hypotheses $H_1(i),(ii),(iii)$, then there exist $r>0$ and $\varphi_0\in C^1(W^{1,p}_0(\Omega))$ such that
	$$\varphi_0(u)=\left\{\begin{array}{ll}
		\varphi(u)&\mbox{if}\ ||u||\leq r\\
		\psi(u)&\mbox{if}\ ||u||\geq 2^{\frac{1}{p}}r,
	\end{array}\right.$$
	$K_{\varphi_0}=K_{\varphi}$ and $C_m(\varphi_0,\infty)\neq 0$.
\end{prop}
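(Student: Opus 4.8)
The plan is to realize $\varphi_0$ as a smooth interpolation between $\varphi$ (on a large ball) and $\psi$ (outside a slightly larger ball), the interpolation being governed by a cut‑off of the scalar $\frac{1}{p}||Du||^p_p=\frac{1}{p}||u||^p$; the exponent $2^{1/p}$ in the statement merely records that this cut‑off passes from the value $r^p/p$ to the value $2r^p/p$. Concretely, I keep $\lambda\in(\hat{\lambda}_m(p),\hat{\lambda}_{m+1}(p))\backslash\hat{\sigma}(p)$ and $\psi$ as in the proof of Proposition~\ref{prop6}, but I allow myself to choose $\lambda$ conveniently in this range (see below); I fix $\zeta\in C^\infty(\mathbb{R}_+,[0,1])$ with $\zeta\equiv 1$ on $[0,r^p/p]$, $\zeta\equiv 0$ on $[2r^p/p,+\infty)$, $\zeta'\leq 0$ and $||\zeta'||_\infty\leq c_0/r^p$ for an absolute constant $c_0$ (the value $r>0$ to be fixed large at the very end); and I set
$$\varphi_0(u)=\Big(1-\zeta\big(\tfrac{1}{p}||Du||^p_p\big)\Big)\psi(u)+\zeta\big(\tfrac{1}{p}||Du||^p_p\big)\varphi(u)\quad\mbox{for all}\ u\in W^{1,p}_0(\Omega).$$
Since $u\mapsto\frac{1}{p}||Du||^p_p$ is $C^1$ with derivative $A_p$ (Proposition~\ref{prop3}) and $\varphi,\psi\in C^1(W^{1,p}_0(\Omega))$, it follows that $\varphi_0\in C^1(W^{1,p}_0(\Omega))$ with
$$\varphi_0'(u)=\zeta\big(\tfrac{1}{p}||Du||^p_p\big)\varphi'(u)+\Big(1-\zeta\big(\tfrac{1}{p}||Du||^p_p\big)\Big)\psi'(u)+\zeta'\big(\tfrac{1}{p}||Du||^p_p\big)\big(\varphi(u)-\psi(u)\big)A_p(u).$$
On $\{||u||\leq r\}$ we have $\zeta=1$, $\zeta'=0$, so $\varphi_0=\varphi$, $\varphi_0'=\varphi'$ there; on $\{||u||\geq 2^{1/p}r\}$ we have $\zeta=0$, $\zeta'=0$, so $\varphi_0=\psi$, $\varphi_0'=\psi'$ there. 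This gives the two prescribed formulas for $\varphi_0$; moreover, since $K_\psi=\{0\}$ (because $\lambda\notin\hat{\sigma}(p)$) and $0\in\{||u||<r\}$, we already have $K_{\varphi_0}\cap\{||u||\geq 2^{1/p}r\}=\emptyset$ and $K_{\varphi_0}\cap\{||u||\leq r\}=K_\varphi\cap\{||u||\leq r\}$.

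The heart of the proof — and the step I expect to be the main obstacle — is to show that, for $r$ large enough, (i) $K_\varphi\subseteq\{||u||<r\}$ and (ii) $\varphi_0$ has no critical point in the transition annulus $\{r<||u||<2^{1/p}r\}$; granting both, the three regions cover $W^{1,p}_0(\Omega)$ and hence $K_{\varphi_0}=K_\varphi$. I would prove (i) and (ii) together by a rescaling argument modelled on the boundedness claim in the proof of Proposition~\ref{prop5} and the Claim in Proposition~\ref{prop6}. Arguing by contradiction, one produces $r_n\to\infty$ and critical points $u_n$ (of $\varphi$ with $||u_n||\to\infty$, respectively of the $r_n$‑cut‑off functional lying in the corresponding annulus, so again $||u_n||\to\infty$); putting $y_n=u_n/||u_n||$, dividing the Euler equation by $||u_n||^{p-1}$, and passing to a subsequence, one has $y_n\stackrel{w}{\rightarrow}y$ in $W^{1,p}_0(\Omega)$, $y_n\rightarrow y$ in $L^p(\Omega)$, $\zeta(\frac{1}{p}||u_n||^p)\rightarrow\zeta_*\in[0,1]$, and $\mu_n:=\zeta'(\frac{1}{p}||u_n||^p)(\varphi(u_n)-\psi(u_n))\rightarrow\mu_*$. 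The delicate point is the extra term $\mu_n A_p(y_n)$: because $|\varphi(u)-\psi(u)|\leq c_1(1+||u||^p)$ on $W^{1,p}_0(\Omega)$ (by (\ref{eq17})) while in the annulus $||u_n||^p\leq 2r_n^p$ and $||\zeta'||_\infty\leq c_0/r_n^p$, the scalars $\mu_n$ stay bounded, and one must check that $1+\mu_*\neq 0$ — this is where the flexibility in the choice of $\lambda$ (and of the profile of $\zeta$) is used — so that, after testing the rescaled equation with $y_n-y$, only $(1+\mu_n)\left\langle A_p(y_n),y_n-y\right\rangle\to 0$ survives and the $(S)_+$‑property of $A_p$ (Proposition~\ref{prop3}) yields $y_n\rightarrow y$ in $W^{1,p}_0(\Omega)$ with $||y||=1$. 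Letting $n\to\infty$ in the rescaled equation (using $H_1(ii)$ and Filippakis--Papageorgiou~\cite{14}) identifies $y$ as a nontrivial eigenfunction of $(-\Delta_p,W^{1,p}_0(\Omega))$; hence $|u_n|\to\infty$ a.e. on the positive‑measure set $E=\{y\neq 0\}$, and $H_1(iii)$ together with (\ref{eq12}), (\ref{eq31}) and Fatou's lemma forces $\int_\Omega[f(z,u_n)u_n-pF(z,u_n)]\,dz\to+\infty$, exactly as in the derivation of (\ref{eq32}). Combining this with the identities obtained on testing the respective Euler equations with $u_n$ — namely $p\varphi(u_n)=\frac{p-2}{2}||Du_n||^2_2+\int_\Omega[f(z,u_n)u_n-pF(z,u_n)]\,dz$ for $\varphi$, and its analogue (carrying the factor $\zeta(\frac{1}{p}||u_n||^p)$ and the extra term $-\mu_n||u_n||^p$) for the cut‑off functional — one reaches the required contradiction, closing (i) and (ii) as in Proposition~\ref{prop6}; the bookkeeping with the $\zeta'$‑term in the transition layer is, I expect, the only genuinely new computation.

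It remains to check that $C_m(\varphi_0,\infty)\neq 0$. Since $\varphi$ is bounded below on bounded sets (by (\ref{eq17})), $\varphi_0$ coincides with $\varphi$ on $\overline{B}_r$ and is a pointwise convex combination of $\varphi$ and $\psi$ on $\{r<||u||<2^{1/p}r\}$ (on which $\psi$, too, is bounded below), so $\varphi_0$ is bounded below on $\overline{B}_{2^{1/p}r}$, say $\varphi_0\geq -M$ there. Also, as $\lambda>\hat{\lambda}_1(p)$, we have $\psi(u)\geq\frac{1}{p}\big(1-\tfrac{\lambda}{\hat{\lambda}_1(p)}\big)||u||^p=-C||u||^p$ with $C>0$. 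Hence, for $c<\min\{-M,\,-2Cr^p\}$, on the one hand $\varphi_0^c\subseteq\{||u||>2^{1/p}r\}$ (because $\varphi_0\geq -M$ on $\overline{B}_{2^{1/p}r}$), on the other hand $\psi^c\subseteq\{||u||>2^{1/p}r\}$ (because $\psi(u)\leq c$ forces $||u||^p>-c/C>2r^p$); since $\varphi_0=\psi$ on $\{||u||>2^{1/p}r\}$, this gives $\varphi_0^c=\psi^c$ and therefore $C_k(\varphi_0,\infty)=C_k(\psi,\infty)$ for all $k\in\NN_0$. As $\psi$ satisfies the C‑condition with $K_\psi=\{0\}$, the second deformation theorem yields $C_k(\psi,\infty)=C_k(\psi,0)$, and by Proposition~1.1 of Perera~\cite{29} we have $C_m(\psi,0)\neq 0$. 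Thus $C_m(\varphi_0,\infty)\neq 0$, which — together with $K_{\varphi_0}=K_\varphi$ — completes the proof.
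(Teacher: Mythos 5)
Your construction of $\varphi_0$ is, up to the substitution $\xi=1-\zeta$ and a reparametrization of the cut-off variable, exactly the paper's (which in turn adapts Perera's Lemma 4.1): the paper sets $\varphi_0(u)=\varphi(u)+\xi\bigl(\tfrac{\|u\|^p}{r^p}\bigr)\tau(u)$ with $\tau=\psi-\varphi$, and your final computation of $C_m(\varphi_0,\infty)$ via $\varphi_0^c=\psi^c$ for $c\ll 0$ is also the paper's. So the architecture is the same; the issue is the step you yourself single out, and there your proposal has a genuine gap. You need $1+\mu_*\neq 0$, where $\mu_n=\zeta'\bigl(\tfrac1p\|u_n\|^p\bigr)(\varphi(u_n)-\psi(u_n))=-\zeta'(\cdot)\tau(u_n)$, and you propose to secure this by ``the flexibility in the choice of $\lambda$ and of the profile of $\zeta$''. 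Neither flexibility is actually available: $\zeta$ must drop from $1$ to $0$ over an interval of length $r^p/p$, so $\|\zeta'\|_\infty\geq p/r^p$ is forced and $c_0$ cannot be taken small; and $\lambda$ cannot in general be taken close to $\hat{\lambda}_m(p)$, because for $p\neq 2$ it is not known that $\hat{\sigma}(p)\cap(\hat{\lambda}_m(p),\hat{\lambda}_{m+1}(p))$ is discrete, so the admissible $\lambda$'s may all be far from $\hat{\lambda}_m(p)$. Since $|\tau(u_n)|$ is genuinely of order $r_n^p$ in the annulus (the term $\tfrac{\hat{\lambda}_m(p)-\lambda}{p}\|u_n\|_p^p$ is not $o(\|u_n\|^p)$ under resonance), $|\mu_n|$ cannot be forced below $1$ by these choices. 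The case $1+\mu_*=0$ can nevertheless be excluded, but by a different argument: if $\mu_n\to-1$ then $\zeta'(\cdot)\tau(u_n)\to 1$ with $\zeta'\leq 0$, hence $\tau(u_n)\leq-\tfrac{r_n^p}{2c_0}$ eventually; since $\int_\Omega F(z,u_n)dz\geq-c$ and $\|Du_n\|_2^2=O(r_n^2)=o(r_n^p)$, this forces $\tfrac{\lambda}{p}\|u_n\|_p^p\geq\tfrac{r_n^p}{4c_0}$, i.e.\ $\liminf\|y_n\|_p>0$; on the other hand, testing the rescaled equation with $y_n$ when $1+\mu_n\to 0$ yields $\zeta_*\hat{\lambda}_m(p)\|y\|_p^p+(1-\zeta_*)\lambda\|y\|_p^p=0$, hence $\|y\|_p=0$ --- a contradiction. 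You should supply this (or an equivalent) argument; as written, the key case is flagged but not closed.

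A second, subtler point concerns your claim (i), $K_\varphi\subseteq\{\|u\|<r\}$. The rescaling scheme of Propositions \ref{prop5} and \ref{prop6} reaches its contradiction with (\ref{eq30})--(\ref{eq32}) only because an a priori upper bound on the energies is available there ($|\varphi(u_n)|\leq M_1$ for a Cerami sequence, $h(t_n,u_n)\to-\infty$ in the Claim of Proposition \ref{prop6}). For arbitrary critical points $u_n\in K_\varphi$ with $\|u_n\|\to\infty$ the identity you quote gives $\int_\Omega[f(z,u_n)u_n-pF(z,u_n)]dz=p\varphi(u_n)-\tfrac{p-2}{2}\|Du_n\|_2^2$, and without a bound $\varphi(u_n)\leq o(\|u_n\|^\tau)$ the growth $\int_\Omega[f(z,u_n)u_n-pF(z,u_n)]dz\gtrsim\|u_n\|^\tau$ produces no contradiction. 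To be fair, the paper's own proof is equally terse at precisely this point (its passage from (\ref{eq40})--(\ref{eq41}) to (\ref{eq42}) is asserted, not argued, and the norm estimate $\|\tau'(u)\|_*=o(\|u\|^{p-1})$ that would make it work is false under resonance), so this is a shared weakness rather than a defect peculiar to your write-up; but if you intend your proof to be self-contained you must either supply the missing energy bound for critical points or restructure the argument so that only Cerami-type sequences need to be controlled.
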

\begin{proof}
	Let $\psi\in C^2(W^{1,p}_0(\Omega))$ be as in the proof of Proposition \ref{prop6}. Also let $\tau:W^{1,p}_0(\Omega)\rightarrow\RR$ be the $C^1$-functional defined by
	$$\tau(u)=\int_{\Omega}F(z,u)dz-\frac{\lambda}{p}||u||^p_p-\frac{1}{2}||Du||^2_2\ \mbox{for all}\ u\in W^{1,p}_0(\Omega).$$
	
	Evidently, we have
	\begin{equation}\label{eq39}
		\varphi(u)=\psi(u)-\tau(u)\ \mbox{for all}\ u\in W^{1,p}_0(\Omega).
	\end{equation}
	
	Since $\lambda\notin\hat{\sigma}(p)$, the functional $\psi$ satisfies the C-condition and so
	$$\mu=\inf\left[||\psi'(u)||_*:u\in W^{1,p}_0(\Omega),||u||=1\right]>0.$$
	
	We have
	$$\psi'(u)=A_p(u)-\lambda|u|^{p-2}u,$$
	hence the $(p-1)$-homogeneity of $\psi'(\cdot)$ implies that
	\begin{equation}\label{eq40}
		\inf[||\psi'(u)||_*:u\in W^{1,p}_0(\Omega),||u||=r]=r^{p-1}\mu>0\ (r>0).
	\end{equation}
	
	Since $\lambda>\hat{\lambda}_m(p)$ and $p>2$, it follows that
	\begin{equation}\label{eq41}
		\limsup\limits_{||u||\rightarrow\infty}\frac{\tau'(u)}{||u||^{p-1}}\leq 0.
	\end{equation}
	
	We have
	\begin{eqnarray}\label{eq42}
		&&\varphi'(u)=\psi'(u)-\tau'(u)\ \mbox{for all}\ u\in W^{1,p}_0(\Omega)\ (\mbox{see (\ref{eq39})}),\nonumber\\
		&\Rightarrow&\varphi'(u)>0\ \mbox{and}\ \varphi'(u)+\tau'(u)>0\ \mbox{for all}\ ||u||>r\ (\mbox{see (\ref{eq40}),\ (\ref{eq41})}).
	\end{eqnarray}
	
	Let $\xi:\RR_+\rightarrow[0,1]$ be a $C^1$-function such that $|\xi'(t)|\leq 1$ for all $t\geq 0$ and
	\begin{equation}\label{eq43}
		\xi(t)=\left\{\begin{array}{ll}
			0&\mbox{if}\ t\in[0,1]\\
			1&\mbox{if}\ t\geq 2.
		\end{array}\right.
	\end{equation}
	
	We define
	$$\varphi_0(u)=\varphi(u)+\xi\left(\frac{||u||^p}{r^p}\right)\tau(u)\ \mbox{for all}\ u\in W^{1,p}_0(\Omega).$$
	
	Evidently, $\varphi_0\in C^1(W^{1,p}_0(\Omega))$ and from (\ref{eq42}), (\ref{eq43}) it follows that
	\begin{eqnarray}
		&&\varphi_0(u)=\left\{\begin{array}{ll}
			\varphi(u)&\mbox{if}\ ||u||\leq r\\
			\psi(u)&\mbox{if}\ ||u||\geq 2^{1/p}r
		\end{array}\right.\label{eq44}\\
		\mbox{and}&&K_{\varphi_0}=K_{\varphi}\subseteq\bar{B}_r.\label{eq45}
	\end{eqnarray}
	
	Moreover, by (\ref{eq44}), (\ref{eq45}) it is clear that
	\begin{eqnarray*}
		&&C_k(\varphi_0,\infty)=C_k(\psi,\infty)\ \mbox{for all}\ k\in\NN_0,\\
		&\Rightarrow&C_k(\varphi_0,\infty)=C_k(\psi,0)\ \mbox{for all}\ k\in\NN\ (\mbox{since}\ K_{\psi}=\{0\},\ \mbox{recall}\ \lambda\notin\hat{\sigma}(p)),\\
		&\Rightarrow&C_m(\varphi_0,\infty)\neq 0\ (\mbox{see Proposition 1.1 of Perera \cite{29}}).
	\end{eqnarray*}
\end{proof}

Next, we turn our attention to the critical groups of $\varphi$ at the origin. To compute them we only need a subcritical growth on $f(z,\cdot)$ and the behavior of $f(z,\cdot)$ near zero. So, we introduce the following weaker set of hypotheses on $f(z,x)$:

\smallskip
$H_0:$ $f:\Omega\times\RR\rightarrow\RR$ is a Carath\'eodory function such that
\begin{itemize}
	\item[(i)] $|f(z,x)|\leq a(z)(1+|x|^{r-1})$ for almost all $z\in\Omega$, all $x\in\RR$ with $a\in L^{\infty}(\Omega)_+,\ p\leq r<p^*$;
	\item[(ii)] there exist $l\in\NN,\delta>0$ and $\eta\in L^{\infty}(\Omega)$ such that
	\begin{eqnarray*}
		&&\hat{\lambda}_l(2)\leq\eta(z)\ \mbox{for almost all}\ z\in\Omega,\eta\not\equiv\hat{\lambda}_l(2),\\
		&&\eta(z)x^2\leq f(z,x)x\leq\hat{\lambda}_{l+1}(2)x^2\ \mbox{for almost all}\ z\in\Omega,\ \mbox{all}\ |x|\leq\delta
	\end{eqnarray*}
	and for every $x\neq 0$ the second inequality is strict on a set of positive Lebesgue measure.
\end{itemize}
\begin{prop}\label{prop8}
	If hypotheses $H_0$ hold and the functional $\varphi$ satisfies the C-condition, then $C_k(\varphi,0)=\delta_{k,d_l}\ZZ$ for all $k\in\NN_0$ with $d_l={\rm dim}\, \bar{H}_l.$
\end{prop}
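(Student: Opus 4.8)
The plan is to compute $C_k(\varphi,0)$ by a homotopy argument connecting $\varphi$ near the origin to a purely quadratic functional, then to read off the critical groups of the latter from the spectral decomposition of $H^1_0(\Omega)$. First I would introduce the quadratic functional
$$\psi_0(u)=\frac{1}{2}\|Du\|_2^2-\frac{1}{2}\int_\Omega\eta(z)u^2\,dz$$
(or one using $\hat\lambda_{l+1}(2)$), which has $0$ as its only critical point, and whose critical groups at $0$ can be identified via Lemma \ref{lem2}: on $\bar H_l$ the functional is negative definite while on $\hat H_{l+1}$ it is positive definite, so by standard Morse-theoretic facts for a nondegenerate (in the generalized sense) critical point of a functional that splits along a direct sum, $C_k(\psi_0,0)=\delta_{k,d_l}\ZZ$, where $d_l=\dim\bar H_l$. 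Note the $p$-Laplacian term $\frac1p\|Du\|_p^p$ is of higher order near $0$ in the $H^1_0$-topology (since $p>2$), so it will not affect the local picture at the origin.

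The core step is to set up a homotopy $\hat h(t,u)=(1-t)\varphi(u)+t\psi_0(u)$ (or more precisely, one should work with a truncation so that the homotopy behaves well, or directly verify a local condition) and show that $0$ is the only critical point of $\hat h(t,\cdot)$ in a fixed small ball $\bar B_\rho$ for all $t\in[0,1]$, with a uniform lower bound $(1+\|u\|)\|\hat h'_u(t,u)\|_*\geq$ const on $\bar B_\rho\setminus\{0\}$. This is where hypothesis $H_0(ii)$ is used essentially: the two-sided bound $\eta(z)x^2\le f(z,x)x\le\hat\lambda_{l+1}(2)x^2$ for $|x|\le\delta$, together with the strictness of the upper inequality on a set of positive measure and the UCP, gives — via Lemma \ref{lem2}(a),(b) — that $\varphi'(u)\ne 0$ for $u\ne 0$ small, and the same estimate controls the convex combination. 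Concretely, for $h=u^+-u^-$ or $h=u$, one estimates $\langle\hat h'_u(t,u),u\rangle$ from below on $\bar H_l$ and from above on $\hat H_{l+1}$ using Lemma \ref{lem2}, exploiting that the nonlinear term $\langle A_p(u),u\rangle=\|Du\|_p^p=o(\|u\|_{H^1_0}^2)$ as $u\to0$ in $C^1_0(\overline\Omega)$, and that on the small ball the reaction satisfies the sign conditions of $H_0(ii)$. Then the homotopy invariance of critical groups (Chang's Theorem 5.1.12 / Liang–Su Proposition 3.2, as already invoked in Proposition \ref{prop6}) yields $C_k(\varphi,0)=C_k(\hat h(0,\cdot),0)=C_k(\hat h(1,\cdot),0)=C_k(\psi_0,0)=\delta_{k,d_l}\ZZ$.

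An alternative, and probably cleaner, route — and the one I would actually write — is to avoid homotopy at the level of $\varphi$ itself and instead invoke the standard machinery for functionals with a local linking / local splitting at the origin: show that $\varphi$ has a local $(d_l)$-dimensional negative space. Namely, using $H_0(ii)$ and $F(z,x)\ge\frac12\eta(z)x^2$ for $|x|\le\delta$ (from integrating the left inequality), one shows $\varphi(u)\le\frac1p\|Du\|_p^p-\frac{c_0}{2}\|u\|_{H^1_0}^2<0$ for small $u\in\bar H_l\setminus\{0\}$ (Lemma \ref{lem2}(a), plus $p>2$), and using $F(z,x)\le\frac12\hat\lambda_{l+1}(2)x^2$ on $|x|\le\delta$ one gets $\varphi(u)\ge c\|u\|_{H^1_0}^2>0$ — or at least $\varphi(u)\ge0$ with equality only at $0$ — for small $u\in\hat H_{l+1}$ (Lemma \ref{lem2}(b)). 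Here the strict inequality on a set of positive measure combined with the UCP upgrades $\ge$ to a genuine local minimum along $\hat H_{l+1}$. Once $\varphi$ exhibits this local linking structure at $0$ with respect to the decomposition $H^1_0(\Omega)=\bar H_l\oplus\hat H_{l+1}$, the critical groups are pinned down by the standard result (e.g. Su's theorem, or Bartsch–Li, or Motreanu–Motreanu–Papageorgiou \cite{21}): $C_k(\varphi,0)=\delta_{k,d_l}\ZZ$, provided $0$ is isolated in $K_\varphi$ — which, if it is not, makes the statement either vacuous or handled by a separate sequence-of-solutions argument.

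The main obstacle I anticipate is the passage between the $W^{1,p}_0(\Omega)$-topology (where the functional lives and where $C^1$-ness and the C-condition hold) and the $H^1_0(\Omega)$-spectral decomposition $\bar H_l\oplus\hat H_{l+1}$ (which a priori only splits $H^1_0$, not $W^{1,p}_0$). One must be careful that the local linking estimates, which are naturally $H^1_0$-estimates, transfer to give information about $C_k(\varphi,0)$ computed in $W^{1,p}_0$; this is handled by the density of $C^1_0(\overline\Omega)$ in both spaces and by the fact that near $0$ the $p$-Laplacian energy is dominated, but it requires either restricting to the finite-dimensional $\bar H_l\subset C^1_0(\overline\Omega)$ (where all norms are equivalent) for the negative-space direction, and a careful estimate using Proposition \ref{prop4}-type regularity for the positive-space direction, or else appealing to an abstract critical-group invariance lemma (as in \cite{21}) that is robust under such topology changes. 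Verifying that $0$ is the unique critical point in a small ball — uniformly along any homotopy one uses — is the delicate computational heart, and is exactly where $H_0(ii)$ and the UCP are indispensable.
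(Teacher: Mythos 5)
Your first sketch is, in spirit, the paper's argument; your preferred second route has a real gap, and the way you propose to implement the first one glosses over the two difficulties that force the paper to split the argument into three separate steps.

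The gap in the local-linking route: the splitting estimates $\varphi|_{\bar H_l\cap B_\rho}\le 0$ and $\varphi|_{(\hat H_{l+1}\cap B_\rho)\setminus\{0\}}>0$ only control $\varphi$ on the two subspaces separately, not on mixed directions $u=\bar u+\hat u$, and for a $C^1$ functional (no shifting theorem available) the standard homological local linking theorems (Liu, Su, Bartsch--Li) deliver only $C_{d_l}(\varphi,0)\neq 0$. They do \emph{not} give the vanishing $C_k(\varphi,0)=0$ for $k\neq d_l$, and that vanishing is precisely what is used downstream (Theorem \ref{th9} needs $C_m(\varphi,0)=0$ with $m\neq d_l$, and Proposition \ref{prop12} needs $C_1(\tau,0)=0$). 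So the statement you want cannot be "pinned down by the standard result" from local linking alone; you must produce a genuine homotopy (or $C^1$-perturbation) to the nondegenerate quadratic model.

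On the homotopy route: the paper does not run a single homotopy from $\varphi$ to the quadratic functional, for two reasons you partially anticipate. First, when you test $h'_u(t,u)$ against $\hat u-\bar u$, the $p$-Laplacian contributes $(1-t)\langle A_p(u),\hat u-\bar u\rangle$, which has no sign (unlike $\langle A_p(u),u\rangle=\|Du\|_p^p$), and the quadratic lower bound available is only $t\,c_9\|u\|^2$, which degenerates as $t\to 0$; so the "higher order" term cannot be absorbed uniformly in $t$, and a separate argument is needed at the $\varphi$-end of the homotopy anyway. Second, the decomposition $\bar H_l\oplus\hat H_{l+1}$ lives in $H^1_0(\Omega)$. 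The paper's resolution is: (i) homotop the purely semilinear functional $\hat\psi(u)=\frac12\|Du\|_2^2-\int_\Omega F(z,u)dz$ to $\tau(u)=\frac12\|Du\|_2^2-\frac{\vartheta}{2}\|u\|_2^2$ with a \emph{constant} $\vartheta\in(\hat\lambda_l(2),\hat\lambda_{l+1}(2))$, entirely inside $H^1_0(\Omega)$, proving uniform isolation of $0$ via the test function $\hat u-\bar u$ for $t\in(0,1]$ and via a separate contradiction argument (regularity, UCP) at $t=0$; (ii) transfer $C_k(\hat\psi,0)=C_k(\hat\psi|_{W^{1,p}_0},0)$ by density (Palais, Chang); (iii) reinstate the $p$-Laplacian not by homotopy but by the $C^1$-continuity of critical groups, using $|\varphi(u)-\psi(u)|\le\frac1p\|u\|^p$ and $\|\varphi'(u)-\psi'(u)\|_*\le c\|u\|^{p-1}$ with $p>2$. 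If you follow this three-step structure (and take the quadratic coefficient to be a constant strictly between the eigenvalues rather than $\eta(z)$, which keeps $\tau$ nondegenerate for free), your argument becomes the paper's proof.
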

\begin{proof}
	We consider the $C^2$-functional $\hat{\psi}:H^1_0(\Omega)\rightarrow\RR$ defined by
	$$\hat{\psi}(u)=\frac{1}{2}||Du||^2_2-\int_{\Omega}F(z,u)dz\ \mbox{for all}\ u\in H^1_0(\Omega).$$
	
	We set $\psi=\hat{\psi}|_{W^{1,p}_0(\Omega)}$ (recall that $p>2$).
	\begin{claim}
		$C_k(\psi,0)=\delta_{k,d_l}\ZZ$ for all $k\in\NN_0$.
	\end{claim}
	
	To prove this claim, let $\vartheta\in(\hat{\lambda}_l(2),\hat{\lambda}_{l+1}(2))$ and consider the $C^2$-functional $\tau:H^1_0(\Omega)\rightarrow\RR$ defined by
	$$\tau(u)=\frac{1}{2}||Du||^2_2-\frac{\vartheta}{2}||u||^2_2\ \mbox{for all}\ u\in H^1_0(\Omega).$$
	
	We consider the homotopy $h(t,u)$ defined by
	$$h(t,u)=(1-t)\hat{\psi}(u)+t\tau(u)\ \mbox{for all}\ (t,u)\in[0,1]\times H^1_0(\Omega).$$
	
	First consider $t\in\left(0,1\right]$. Let $u\in C^1_0(\overline{\Omega})$ with $||u||_{C^1_0(\overline{\Omega})}\leq\delta$ where $\delta>0$ is as in hypothesis $H_0(ii)$. Let $\left\langle \cdot,\cdot\right\rangle_0$ denote the duality brackets for the pair $(H^{-1}(\Omega),H^1_0(\Omega))$. Then we have
	\begin{equation}\label{eq46}
		\left\langle h'_u(t,u),v\right\rangle=(1-t)\left\langle \hat{\psi}'(u),v\right\rangle_0+t\left\langle \tau'(u),v\right\rangle_0\ \mbox{for all}\ v\in H^1_0(\Omega).
	\end{equation}
	
	Recall that $\bar{H}_l=\overset{l}{\underset{\mathrm{k=1}}\oplus}E(\hat{\lambda}_k(2)),\ \hat{H}_{l+1}=\bar{H}^\perp_l=\overline{{\underset{\mathrm{k\geq l+1}}\oplus}E(\hat{\lambda}_k(2))}$ and consider the orthogonal direct sum decomposition
	$$H^1_0(\Omega)=\bar{H}_l\oplus\hat{H}_{l+1}.$$
	
	So, every $u\in H^1_0(\Omega)$ admits a unique sum decomposition
	$$u=\bar{u}+\hat{u}\ \mbox{with}\ \bar{u}\in\bar{H}_l,\ \hat{u}\in\hat{H}_{l+1}.$$
	
	In (\ref{eq46}) we choose $v=\hat{u}-\bar{u}$. Exploiting the orthogonality of the component spaces, we have
	\begin{equation}\label{eq47}
		\left\langle \hat{\psi}(u),\hat{u}-\bar{u}\right\rangle_0=||D\hat{u}||^2_2-||D\bar{u}||^2_2-\int_{\Omega}f(z,u)(\hat{u}-\bar{u})dz.
	\end{equation}
	
	Hypothesis $H_0(ii)$ implies that
	\begin{equation}\label{eq48}
		\eta(z)\leq\frac{f(z,x)}{x}\leq\hat{\lambda}_{l+1}(2)\ \mbox{for almost all}\ z\in\Omega,\ \mbox{all}\ 0<|x|\leq\delta
	\end{equation}
	and the second inequality is for every $x\neq 0$ strict on a set of positive Lebesgue measure. Set $y=\hat{u}-\bar{u}$. Then
	\begin{eqnarray}\label{eq49}
		&f(z,u)(\hat{u}-\bar{u})=f(z,u)y&=\frac{f(z,u)}{u}uy\nonumber\\
		&&\leq\left\{\begin{array}{ll}
			\hat{\lambda}_{l+1}(2)(\hat{u}^2-\bar{u}^2)&\mbox{if}\ uy\geq 0\\
			\eta(z)(\hat{u}^2-\bar{u}^2)&\mbox{if}\ uy<0\ \mbox{(see (\ref{eq48}))}
		\end{array}\right.\nonumber\\
		&&\leq\hat{\lambda}_{l+1}(2)\hat{u}^2-\eta(z)\bar{u}^2\ \mbox{for almost all}\ z\in\Omega .
	\end{eqnarray}
	
	Returning to (\ref{eq47}) and using (\ref{eq49}), we obtain
	\begin{eqnarray}\label{eq50}
		&&\left\langle \hat{\psi}'(u),\hat{u}-\bar{u}\right\rangle_0\geq||D\hat{u}||^2_2-\hat{\lambda}_{l+1}(2)||\hat{u}||^2_2-\left[||D\bar{u}||^2_2-\hat{\lambda}_l(2)||\bar{u}||^2_2\right]\geq 0\\
		&&(\mbox{see hypothesis}\ H_0(ii)\ \mbox{and (\ref{eq5})}).\nonumber
	\end{eqnarray}
	
	Also using Lemma \ref{lem2}, we have
	\begin{equation}\label{eq51}
		\left\langle \tau'(u),\hat{u}-\bar{u}\right\rangle_0=||D\hat{u}||^2_2-\vartheta||\hat{u}||^2_2-[||D\bar{u}||^2_2-\vartheta||\bar{u}||^2_2]\geq c_9||u||^2\ \mbox{for some}\ c_9>0.
	\end{equation}
	
	So, if we use (\ref{eq50}), (\ref{eq51}) in (\ref{eq46}), then
	$$\left\langle h'_u(t,u),\hat{u}-\bar{u}\right\rangle\geq tc_9||u||^2>0\ \mbox{for all}\ t\in\left(0,1\right].$$
	
	Standard regularity theory implies that
	$$K_{h(t,\cdot)}\subseteq C^1_0(\overline{\Omega})\ \mbox{for all}\ t\in[0,1].$$
	
	Therefore we infer that for all $t\in\left(0,1\right],\ u=0$ is isolated in $K_{h(t,\cdot)}$.
	
	We have $h(0,\cdot)=\hat{\psi}(\cdot)$. Next, we show that $0\in K_{\hat{\psi}}$ s isolated. Arguing by contradiction, suppose that we could find $\{u_n\}_{n\geq 1}\subseteq H^1_0(\Omega)$ such that
	\begin{equation}\label{eq52}
		u_n\rightarrow 0\ \mbox{in}\ H^1(\Omega)\ \mbox{and}\ \hat{\psi}'(u_n)=0\ \mbox{for all}\ n\in\NN_0.
	\end{equation}
	
	From the equation in (\ref{eq52}) we have
	\begin{equation}\label{eq53}
		-\Delta u_n(z)=f(z,u_n(z))\ \mbox{for almost all}\ z\in\Omega,\ u_n|_{\partial\Omega}=0,\ n\in\NN\,.
	\end{equation}
	
	From (\ref{eq53}) and standard regularity theory (see, for example, Gasinski and Papageorgiou \cite[pp. 737-738]{15}), we can find $\alpha\in(0,1)$ and $c_{10}>0$ such that
	\begin{equation}\label{eq54}
		u_n\in C^{1,\alpha}_0(\overline{\Omega})\ \mbox{and}\ ||u_n||_{C^{1,\alpha}_0(\overline{\Omega})}\leq c_{10}\ \mbox{for all}\ n\in\NN\,.
	\end{equation}
	
	Exploiting the compact embedding of $C^{1,\alpha}_0(\overline{\Omega})$ into $C^1(\overline{\Omega})$ and using (\ref{eq54}) and (\ref{eq52}), we obtain
	$$u_n\rightarrow 0\ \mbox{in}\ C^1_0(\overline{\Omega}).$$
	
	Therefore we can find $n_0\in\NN$ such that
	\begin{eqnarray}\label{eq55}
		&&|u_n(z)|\leq\delta\ \mbox{for all}\ n\geq n_0,\ \mbox{all}\ z\in\overline{\Omega},\nonumber\\
		&\Rightarrow&\eta(z)u_n(z)^2\leq f(z,u_n(z))u_n(z)\leq\hat{\lambda}_{l+1}(2)u_n(z)^2\\
		&&\mbox{for almost all}\ z\in\Omega,\ \mbox{all}\ n\geq n_0\ (\mbox{see hypothesis }H_0(ii)).\nonumber
	\end{eqnarray}
	
	Then from (\ref{eq54}) and the previous argument, we have
	\begin{equation}\label{eq56}
		f(z,u_n(z))(\hat{u}_n-\bar{u}_n)(z)\leq\hat{\lambda}_{l+1}(2)\hat{u}_n(z)^2-\eta(z)\bar{u}_n(z)^2\ \mbox{for almost all}\ z\in\Omega,\ \mbox{all}\ n\geq n_0.
	\end{equation}
	
	From (\ref{eq53}) we have
	$$\left\langle A(u_n),v\right\rangle=\int_{\Omega}f(z,u_n)vdz\ \mbox{for all}\ v\in H^1_0(\Omega).$$
	
	Choosing $v=\hat{u}_n-\bar{u}_n\in H^1_0(\Omega)$, we obtain
	\begin{eqnarray*}
		&&\int_{\Omega}(Du_n,D\hat{u}_n-D\bar{u}_n)_{\RR^N}dz\\
		&&=||D\hat{u}_n||^2_2-||D\bar{u}_n||^2_2\ \mbox{(from the orthogonality of the component spaces)}\\
		&&=\int_{\Omega}f(z,u_n)(\hat{u}_n-\bar{u}_n)dz\\
		&&\leq\int_{\Omega}\left[\hat{\lambda}_{l+1}(2)\hat{u}_n^2-\eta(z)\bar{u}_n^2\right]dz\ (\mbox{see (\ref{eq56})}),\\
		&\Rightarrow 0&\leq||D\hat{u}_n||^2_2-\hat{\lambda}_{l+1}(2)||\hat{u}_n||^2_2\leq||D\hat{u}_n||^2_2-\int_{\Omega}\eta(z)\bar{u}_n^2dz\leq-c_{11}||\bar{u}_n||^2\\
		&&\mbox{for some}\ c_{11}>0,\ \mbox{all}\ n\geq n_0\ (\mbox{see (\ref{eq5}) and Lemma \ref{lem2}(a)})\\
		&\Rightarrow&\bar{u}_n=0\ \mbox{and}\ \hat{u}_n\in E(\hat{\lambda}_{l+1}(2))\ \mbox{for all}\ n\in\NN\,.
	\end{eqnarray*}
	
	Then $u_n=\hat{u}_n$ for all $n\geq n_0$ and the UCP implies that
	\begin{equation}\label{eq57}
		u_n(z)\neq 0\ \mbox{for almost all}\ z\in\Omega,\ \mbox{all}\ n\geq n_0.
	\end{equation}
	
	From (\ref{eq53}) and (\ref{eq57}), we have
	$$\hat{\lambda}_{l+1}(2)||u_n||^2_2=\int_{\Omega}f(z,u_n)u_ndz<\hat{\lambda}_{l+1}(2)||u_n||^2_2\ \mbox{for all}\ n\geq n_0\ (\mbox{see hypothesis}\ H_0(ii)),$$
	a contradiction. Therefore $0\in K_{\hat{\psi}}$ is isolated and we can conclude that
	$$0\in K_{h(t,\cdot)}\ \mbox{is isolated for all}\ t\in[0,1].$$
	
	So, Theorem 5.2 of Corvellec and Hantoute \cite{11} implies that
	\begin{eqnarray}\label{eq58}
		&&C_k(\hat{\psi},0)=C_k(\tau,0)\ \mbox{for all}\ k\in\NN_0,\nonumber\\
		&\Rightarrow&C_k(\hat{\psi},0)=\delta_{k,d_l}\ZZ\ \mbox{for all}\ k\in\NN_0
	\end{eqnarray}
	(see Motreanu, Motreanu and Papageorgiou \cite[Theorem 6.51, p. 155]{21}).
	
	Since $W^{1,p}_0(\Omega)$ is dense in $H^1_0(\Omega)$, it follows that
	\begin{eqnarray}\label{eq59}
		&&C_k(\hat{\psi},0)=C_k(\psi,0)\ \mbox{for all}\ k\in\NN_0\ (\mbox{see Palais \cite{22} and Chang \cite[p. 14]{7}})\nonumber\\
		&\Rightarrow&C_k(\psi,0)=\delta_{k,d_l}\ZZ\ \mbox{for all}\ k\in\NN_0.
	\end{eqnarray}
	
	We have
	\begin{eqnarray}
		&&|\varphi(u)-\psi(u)|\leq\frac{1}{p}||u||^p,\label{eq60}\\
		&&|\left\langle \varphi'(u)-\psi'(u),v\right\rangle|\leq c_{12}||u||^{p-1}||v||\ \mbox{for some}\ c_{12}>0,\ \mbox{all}\ v\in H^1_0(\Omega),\nonumber\\
		&\Rightarrow&||\varphi'(u)-\psi'(u)||_*\leq c_{12}||u||^{p-1}\label{eq61}
	\end{eqnarray}
	
	Then (\ref{eq60}), (\ref{eq61}) and the $C^1$-continuity of the critical groups (see Theorem 5.1 of Corvellec and Hantoute \cite{11}), imply that
	\begin{eqnarray*}
		&&C_k(\varphi,0)=C_k(\psi,0)\ \mbox{for all}\ k\in\NN_0,\\
		&\Rightarrow&C_k(\varphi,0)=\delta_{k,d_l}\ZZ\ \mbox{for all}\ k\in\NN_0\ (\mbox{see (\ref{eq59})})
	\end{eqnarray*}
\end{proof}

Now we are ready  for the existence theorem.
\begin{theorem}\label{th9}
	If hypotheses $H_1$ hold, then problem (\ref{eq1}) admits a nontrivial solution $u_0\in C^1_0(\overline{\Omega})$.
\end{theorem}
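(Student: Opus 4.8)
The plan is to combine the critical group computations already established with the basic Morse-theoretic obstruction that a functional satisfying the C-condition with isolated critical points must have matching critical groups at infinity and at some critical point. First I would invoke Proposition \ref{prop5} to know that $\varphi$ satisfies the C-condition under hypotheses $H_1(i),(ii),(iii)$. Then I would apply either Proposition \ref{prop6} (direct route) or Proposition \ref{prop7} (the truncation route) to obtain $C_m(\varphi,\infty)\neq 0$; in the second case one also uses that $K_{\varphi_0}=K_{\varphi}$ so the critical groups at infinity transfer back to $\varphi$. Separately, since hypotheses $H_1(i),(iv)$ clearly imply the weaker hypotheses $H_0$ (with the same $l$, $\delta$, $\eta$; note $H_1(i)$ gives a bound of the form required by $H_0(i)$ because $f(z,\cdot)$ is $(p-1)$-sublinear and bounded on bounded sets, so one may take $r=p$), Proposition \ref{prop8} yields $C_k(\varphi,0)=\delta_{k,d_l}\ZZ$ for all $k\in\NN_0$.

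Next I would argue by contradiction: suppose $K_{\varphi}=\{0\}$, i.e.\ the trivial solution is the only critical point of $\varphi$. Since $\varphi\in C^1$, satisfies the C-condition, and $0$ is the unique (hence isolated) critical point, the critical groups at infinity coincide with the critical groups at $0$, that is
$$C_k(\varphi,\infty)=C_k(\varphi,0)\ \mbox{for all}\ k\in\NN_0.$$
This follows from the second deformation theorem / Morse-theoretic considerations (see, e.g., Motreanu, Motreanu and Papageorgiou \cite{21}), because if $0$ is the only critical point then $\varphi$ has no critical values other than $\varphi(0)=0$, so $\varphi^c$ for $c<0$ deformation-retracts appropriately and $C_k(\varphi,\infty)=H_k(W^{1,p}_0(\Omega),\varphi^c)=C_k(\varphi,0)$. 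But $C_m(\varphi,\infty)\neq 0$ forces $C_m(\varphi,0)\neq 0$, whence by Proposition \ref{prop8} we get $m=d_l$. This contradicts hypothesis $H_1(iv)$, which explicitly requires $d_l\neq m$. Therefore $K_{\varphi}\neq\{0\}$, i.e.\ there exists $u_0\in W^{1,p}_0(\Omega)$, $u_0\neq 0$, with $\varphi'(u_0)=0$.

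It then remains to upgrade the regularity of $u_0$. Since $\varphi'(u_0)=0$ means $A_p(u_0)+A(u_0)=N_f(u_0)$, i.e.\ $u_0$ is a weak solution of $-\Delta_p u_0-\Delta u_0=f(z,u_0)$ with $u_0|_{\partial\Omega}=0$, and since $N_f(u_0)\in L^{\infty}(\Omega)$ by hypothesis $H_1(i)$ (as $u_0\in W^{1,p}_0(\Omega)\hookrightarrow L^{p^*}$ and $f$ is bounded on bounded sets after a Sobolev-type bootstrap, or more directly using the $(p-1)$-subcritical growth \eqref{eq17}), the nonlinear regularity theory of Lieberman \cite{19} (see also the discussion preceding Proposition \ref{prop4}) gives $u_0\in C^{1,\alpha}_0(\overline{\Omega})$ for some $\alpha\in(0,1)$, in particular $u_0\in C^1_0(\overline{\Omega})$. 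This completes the proof.

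The main obstacle is the identity $C_k(\varphi,\infty)=C_k(\varphi,0)$ under the assumption $K_{\varphi}=\{0\}$: one must be careful that the C-condition (not the full PS-condition) is enough to run the deformation argument, and that the single critical value $\varphi(0)=0$ is handled correctly (the sublevel set $\varphi^c$ for very negative $c$ retracts onto a punctured neighborhood of $0$ through the gradient flow, with no other critical points to block the deformation). The remaining ingredients — verifying $H_1\Rightarrow H_0$ and the regularity bootstrap — are routine given the results already assembled in the excerpt.
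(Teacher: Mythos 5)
Your proposal is correct and follows essentially the same route as the paper: C-condition from Proposition \ref{prop5}, $C_m(\varphi,\infty)\neq 0$ from Proposition \ref{prop6} or \ref{prop7}, $C_k(\varphi,0)=\delta_{k,d_l}\ZZ$ from Proposition \ref{prop8}, and the mismatch $d_l\neq m$ to rule out the trivial solution, followed by the Ladyzhenskaya--Uraltseva/Lieberman regularity bootstrap. The only cosmetic difference is that the paper invokes the standard fact that $C_m(\varphi,\infty)\neq 0$ directly produces some $u_0\in K_\varphi$ with $C_m(\varphi,u_0)\neq 0$, whereas you phrase the same Morse-theoretic principle contrapositively via the special case $K_\varphi=\{0\}\Rightarrow C_k(\varphi,\infty)=C_k(\varphi,0)$; both rest on the same result and both are valid under the C-condition.
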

\begin{proof}
	As we have already mentioned we can use two approaches.
	
	In the first we use Proposition \ref{prop6} and have that
	$$C_m(\varphi,\infty)\neq 0.$$
	
	So, there exists $u_0\in W^{1,p}_0(\Omega)$ such that
	\begin{equation}\label{eq62}
		u_0\in K_{\varphi}\ \mbox{and}\ C_m(\varphi,u_0)\neq 0.
	\end{equation}
	
	On the other hand, from Proposition \ref{prop8} we have
	\begin{equation}\label{eq63}
		C_k(\varphi,0)=\delta_{k,d_l}\ZZ\ \mbox{for all}\ k\in\NN_0.
	\end{equation}
	
	Recalling that $d_l\neq m$ (see hypothesis $H_1(iv)$) and comparing (\ref{eq62}) and (\ref{eq63}), we see that
	$$u_0\neq 0.$$
	
	In the second approach, we use Proposition \ref{prop7}. According to that result, we have
	$$C_m(\varphi_0,\infty)\neq 0.$$
	
	So, we can find $u_0\in W^{1,p}_0(\Omega)$ such that
	\begin{equation}\label{eq64}
		u_0\in K_{\varphi_0}\ \mbox{and}\ C_m(\varphi_0,u_0)\neq 0.
	\end{equation}
	
	Note that $\left.\varphi_0\right|_{\bar{B}_r}=\varphi|_{\bar{B}_r}$ (see Proposition \ref{prop7}). So
	\begin{eqnarray}\label{eq65}
		&&C_k(\varphi_0,0)=C_k(\varphi,0)\ \mbox{for all}\ k\in\NN_0,\nonumber\\
		&\Rightarrow&C_k(\varphi_0,0)=\delta_{k,d_l}\ZZ\ \mbox{for all}\ k\in\NN_0\ (\mbox{see Proposition \ref{prop8}}).
	\end{eqnarray}
	
	Again, since $d_l\neq m$, from (\ref{eq64}) and (\ref{eq65}) it follows that
	$$u_0\neq 0\ \mbox{and}\ u_0\in K_{\varphi}\ \mbox{(see Proposition \ref{prop7})}.$$
	
	So, with both approaches we produced a nontrivial critical point $u_0$ of the functional $\varphi$. Then $u_0$ is a nontrivial solution of (\ref{eq1}). Invoking Theorem 7.1, p. 286, of Ladyzhenskaya and Uraltseva \cite{17}, we have $u_0\in L^{\infty}(\Omega)$. So, we apply Theorem 1 of Lieberman \cite{19} and conclude that $u_0\in C^1(\overline{\Omega})$.
\end{proof}

\section{Multiple Nontrivial Solutions}

In this section we strengthen the conclusions on the reaction term $f(z,x)$ and prove a multiplicity theorem. More precisely, the new conditions on $f(z,x)$ are the following:

\smallskip
$H_2:$ $f:\Omega\times\RR\rightarrow\RR$ is a Carath\'eodory function such that
\begin{itemize}
	\item[(i)] for every $\rho>0$, there exists $a_{\rho}\in L^{\infty}(\Omega)_+$ such that
	$$|f(z,x)|\leq a_{\rho}(z)\ \mbox{for almost all}\ z\in\Omega,\ \mbox{all}\ |x|\leq\rho;$$
	\item[(ii)] there exists an integer $m\geq 1$ such that
	$$\lim\limits_{x\rightarrow\pm\infty}\frac{f(z,x)}{|x|^{p-2}x}=\hat{\lambda}_m(p)\ \mbox{uniformly for almost all}\ z\in\Omega;$$
	\item[(iii)] there exists $\tau\in(2,p)$ such that
	$$0<\beta_0\leq\liminf\limits_{x\rightarrow\pm\infty}\frac{f(z,x)x-pF(z,x)}{|x|^{\tau}}\ \mbox{uniformly for almost all}\ z\in\Omega,$$
	where $F(z,x)=\int^x_0f(z,s)ds$;
	\item[(iv)] there exist functions $w_{\pm}\in W^{1,p}(\Omega)\cap C(\overline{\Omega})$ and constants $c_{\pm}\in\RR$ such that
	\begin{eqnarray*}
		&&w_-(z)\leq c_-<0<c_+\leq w_+(z)\ \mbox{for all}\ z\in\overline{\Omega},\\
		&&f(z,w_+(z))\leq 0\leq f(z,w_-(z))\ \mbox{for almost all}\ z\in\Omega,\\
		&&A_p(w_-)+A(w_-)\leq 0\leq A_p(w_+)+A(w_+)\ \mbox{in}\ W^{-1,p'}(\Omega)=W^{1,p}_0(\Omega)^*;
	\end{eqnarray*}
	\item[(v)] there exist an integer $l\geq 1$ with $d_l\neq m$ ($d_l={\rm dim}\,\bar{H}_l$), $\delta>0$ and $\eta\in L^{\infty}(\Omega)$ such that
	\begin{eqnarray*}
		&&\hat{\lambda}_l(2)\leq\eta(z)\ \mbox{for almost all}\ z\in\Omega,\eta\not\equiv\hat{\lambda}_l(2),\\
		&&\eta(z)x^2\leq f(z,x)x\leq\hat{\lambda}_{l+1}(2)x^2\ \mbox{for almost all}\ z\in\Omega,\ \mbox{all}\ |x|\leq\delta
	\end{eqnarray*}
	and for $x\neq 0$ the second inequality is strict on a set of positive Lebesgue measure;
	\item[(vi)] for every $\rho>0$, there exists $\hat{\xi}_{\rho}>0$ such that for almost all $z\in\Omega$ the function
	$$z\mapsto f(z,x)+\hat{\xi}_{\rho}|x|^{p-2}x$$
	is nondecreasing on $[-\rho,\rho]$.
\end{itemize}
\begin{remark}
We see that in comparison to the hypotheses
$H_1$, we have added
	 hypotheses $H_2(iv),\, (vi)$. So, the problem remains resonant at both $\pm\infty$ and at zero. Hypothesis $H_1(iv)$ is satisfied if for example, we can find $c_-<0<c_+$ such that
	$$f(z,c_+)\leq 0\leq f(z,c_-)\ \mbox{for almost all}\ z\in\Omega\,.$$
	
	Therefore this hypothesis implies that near zero $f(z,\cdot)$ exhibits an oscillatory behavior.
\end{remark}

First, we produce two constant sign solutions.
\begin{prop}\label{prop10}
	If hypotheses $H_2(i),(iv),(v),(vi)$ hold, then problem (\ref{eq1}) admits two nontrivial smooth solutions of constant sign
	\begin{eqnarray*}
		&&u_0\in {\rm int}\,C_+\ \mbox{with}\ u_0(z)<w_+(z)\ \mbox{for all}\ z\in\overline{\Omega},\\
		&&v_0\in -{\rm int}\,C_+\ \mbox{with}\ w_-(z)<v_0(z)\ \mbox{for all}\ z\in\overline{\Omega}.
	\end{eqnarray*}
\end{prop}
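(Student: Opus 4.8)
The plan is to obtain $u_0$ as a global minimizer of a truncated and penalized modification of $\varphi$, and then to upgrade it to a genuine solution of problem (\ref{eq1}) trapped strictly between $0$ and $w_+$; the negative solution $v_0$ will come from the mirror-image construction. So first I would introduce the truncation $\hat f_+(z,x)$ which equals $0$ for $x<0$, equals $f(z,x)$ for $0\le x\le w_+(z)$, and equals $f(z,w_+(z))$ for $x>w_+(z)$, together with its primitive $\hat F_+(z,x)=\int_0^x\hat f_+(z,s)\,ds$ and the functional $\hat\varphi_+(u)=\frac1p\|Du\|_p^p+\frac12\|Du\|_2^2-\int_\Omega\hat F_+(z,u)\,dz$. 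By $H_2(i)$ (applied with $\rho=\|w_+\|_\infty$) the function $\hat f_+$ is Carath\'eodory and bounded, so $\hat\varphi_+\in C^1(W^{1,p}_0(\Omega))$ has at most linear growth in its integral term and is therefore coercive (Poincar\'e's inequality) and sequentially weakly lower semicontinuous; hence it attains its infimum at some $u_0\in W^{1,p}_0(\Omega)$.

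The crucial point will be to check that $u_0\neq 0$, and this is exactly where the resonance-at-zero hypothesis $H_2(v)$ enters. Since $w_+\ge c_+>0$ on $\overline\Omega$ and $\hat u_1(2)\in{\rm int}\,C_+$, for $t>0$ small we have $0\le t\hat u_1(2)\le\min\{c_+,\delta\}$ pointwise, hence $\hat F_+(z,t\hat u_1(2))=F(z,t\hat u_1(2))\ge\frac12\eta(z)t^2\hat u_1(2)^2$; because $\eta\ge\hat\lambda_l(2)\ge\hat\lambda_1(2)$ a.e., $\eta\not\equiv\hat\lambda_l(2)$ and $\hat u_1(2)>0$ a.e., this gives $\int_\Omega\eta\,\hat u_1(2)^2\,dz>\hat\lambda_1(2)\|\hat u_1(2)\|_2^2$, so
\[
\hat\varphi_+(t\hat u_1(2))\le\frac{t^p}{p}\|D\hat u_1(2)\|_p^p-\frac{t^2}{2}\Big(\int_\Omega\eta\,\hat u_1(2)^2\,dz-\hat\lambda_1(2)\|\hat u_1(2)\|_2^2\Big).
\]
As $p>2$, the right-hand side is negative for $t$ small, so $\inf\hat\varphi_+<0=\hat\varphi_+(0)$ and therefore $u_0\neq 0$.

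Next I would pin down the location of $u_0$. Testing $\hat\varphi_+'(u_0)=0$ with $-u_0^-$ yields $\|Du_0^-\|_p^p+\|Du_0^-\|_2^2=0$, so $u_0\ge 0$; testing with $(u_0-w_+)^+\in W^{1,p}_0(\Omega)$ and using $A_p(w_+)+A(w_+)\ge 0$ from $H_2(iv)$ together with $\hat f_+(\cdot,u_0)=f(\cdot,w_+)\le 0$ on $\{u_0>w_+\}$, the strict monotonicity of $A_p$ and the monotonicity of $A$ force $|\{u_0>w_+\}|_N=0$, i.e. $0\le u_0\le w_+$. On this range $\hat f_+(\cdot,u_0)=f(\cdot,u_0)$, so $u_0$ is a nontrivial nonnegative weak solution of (\ref{eq1}), and the regularity argument already used in Theorem \ref{th9} (Ladyzhenskaya--Uraltseva followed by Lieberman) gives $u_0\in C_+\setminus\{0\}$. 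Finally, observing that $H_2(v)$ forces $f(z,0)=0$, with $\rho=\max\{\|u_0\|_\infty,\|w_+\|_\infty\}$ and $\hat\xi_\rho$ as in $H_2(vi)$ the monotonicity of $x\mapsto f(z,x)+\hat\xi_\rho|x|^{p-2}x$ yields $f(z,u_0)+\hat\xi_\rho u_0^{p-1}\ge 0$, hence $\Delta_pu_0+\Delta u_0\le\hat\xi_\rho u_0^{p-1}$, and the nonlinear strong maximum principle gives $u_0\in{\rm int}\,C_+$; since $u_0\le w_+$, the same monotonicity and $H_2(iv)$ give
\[
-\Delta_pw_+-\Delta w_++\hat\xi_\rho w_+^{p-1}\ge f(z,w_+)+\hat\xi_\rho w_+^{p-1}\ge f(z,u_0)+\hat\xi_\rho u_0^{p-1}=-\Delta_pu_0-\Delta u_0+\hat\xi_\rho u_0^{p-1},
\]
and a strong comparison principle for the $(p,2)$-operator yields $w_+-u_0\in{\rm int}\,C_+$, i.e. $u_0(z)<w_+(z)$ on $\overline\Omega$. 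Repeating the whole argument with $w_-$ in place of $w_+$, the nonpositive truncation of $f$, and $-t\hat u_1(2)$ in the nontriviality step produces $v_0\in-{\rm int}\,C_+$ with $w_-(z)<v_0(z)$ on $\overline\Omega$.

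I expect the main obstacle to be the nontriviality step: it is the only place where hypothesis $H_2(v)$ is genuinely used, and it requires both the sharp spectral inequality $\int_\Omega\eta\,\hat u_1(2)^2\,dz>\hat\lambda_1(2)\|\hat u_1(2)\|_2^2$ (which uses $\hat u_1(2)>0$ a.e. together with $\eta\ge\hat\lambda_l(2)\ge\hat\lambda_1(2)$ and $\eta\not\equiv\hat\lambda_l(2)$) and the exponent inequality $p>2$ in order to beat the $p$-Laplacian term; the location, regularity and interior-cone conclusions are the standard truncation--comparison and maximum-principle routine.
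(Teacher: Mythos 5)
Your proposal is correct and follows essentially the same route as the paper's proof: truncation of $f$ at $0$ and at $w_+$, direct minimization of the coercive truncated functional, nontriviality via $\hat{\varphi}_+(t\hat{u}_1(2))<0$ for small $t>0$ (exploiting $H_2(v)$, $\eta\not\equiv\hat{\lambda}_l(2)$ and the exponent gap $p>2$), localization in $[0,w_+]$ by testing with $-u_0^-$ and $(u_0-w_+)^+$, and the Pucci--Serrin tangency principle and boundary point lemma to obtain $u_0\in{\rm int}\,C_+$ and the strict comparison with $w_+$. The only point to adjust is cosmetic: since $w_+$ is merely in $W^{1,p}(\Omega)\cap C(\overline{\Omega})$ and does not vanish on $\partial\Omega$, the conclusion of your final comparison step should be stated as $u_0<w_+$ in $\Omega$ together with $u_0=0<c_+\leq w_+$ on $\partial\Omega$, rather than $w_+-u_0\in{\rm int}\,C_+$.
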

\begin{proof}
	First, we produce the positive solution.
	
	We introduce the following Carath\'eodory function
	\begin{equation}\label{eq66}
		\hat{f}_+(z,x)=\left\{\begin{array}{ll}
			0&\mbox{if}\ x<0\\
			f(z,x)&\mbox{if}\ 0\leq x\leq w_+(z)\\
			f(z,w_+(z))&\mbox{if}\ w_+(z)<x.
		\end{array}\right.
	\end{equation}
	
	We set $\hat{F}_+(z,x)=\int^x_0\hat{f}_+(z,s)ds$ and consider the $C^1$-functional $\hat{\varphi}_+:W^{1,p}_0(\Omega)\rightarrow\RR$ defined by
	$$\hat{\varphi}_+(u)=\frac{1}{p}||Du||^p_p+\frac{1}{2}||Du||^2_2-\int_{\Omega}\hat{F}_+(z,u)dz\ \mbox{for all}\ u\in W^{1,p}_0(\Omega).$$
	
	From (\ref{eq66}) it is clear that $\hat{\varphi}_+$ is coercive. Also, using the Sobolev embedding theorem, we see that $\hat{\varphi}_+$ is sequentially weakly lower semicontinuous. So, by the Weierstrass theorem, we can find $u_0\in W^{1,p}_0(\Omega)$ such that
	\begin{equation}\label{eq67}
		\hat{\varphi}_+(u_0)=\inf[\hat{\varphi}_+(u):u\in W^{1,p}_0(\Omega)].
	\end{equation}
	
	From (\ref{eq67}) we have
	\begin{eqnarray}\label{eq68}
		&&\hat{\varphi}'_+(u_0)=0,\nonumber\\
		&\Rightarrow&\left\langle A_p(u_0),h\right\rangle+\left\langle A(u_0),h\right\rangle=\int_{\Omega}\hat{f}_+(z,u_0)hdz\ \mbox{for all}\ h\in W^{1,p}_0(\Omega).
	\end{eqnarray}
	
	In (\ref{eq68})  we first choose $h=-u^-_0\in W^{1,p}_0(\Omega)$. Then
	\begin{eqnarray*}
		&&||Du^-_0||^p_p+||Du^-_0||^2_2=0\ (\mbox{see (\ref{eq66})}),\\
		&\Rightarrow&u_0\geq 0.
	\end{eqnarray*}
	
	Also, in (\ref{eq68}) we choose $h=(u_0-w_+)^+\in W^{1,p}_0(\Omega)$ (see hypothesis $H_2(iv)$). Then
	\begin{eqnarray*}
		&&\int_{\Omega}|Du_0|^{p-2}(Du_0,D(u_0-w_+)^+)_{\RR^N}dz+\int_{\Omega}(Du_0,D(u_0-w_+)^+)_{\RR^N}dz\\
		&&=\int_{\Omega}f(z,w_+)(u_0-w_+)^+dz\ (\mbox{see (\ref{eq66})})\\
		&&\leq\int_{\Omega}|Dw_+|^{p-2}(Dw_+,D(u_0-w_+)^+)_{\RR^N}dz+\int_{\Omega}(Dw_+,D(u_0-w_+)^+)_{\RR^N}dz\\
		&&(\mbox{see hypothesis}\ H_2(iv)),\\
		&\Rightarrow&\int_{\Omega}(|Du_0|^{p-2}Du_0-|Dw_+|^{p-2}Dw_+,D(u_0-w^+)^+)_{\RR^N}dz+||D(u_0-w_+)^+||^2_2\leq 0,\\
		&\Rightarrow&u_0\leq w_+.
	\end{eqnarray*}
	
	So, we have proved that
	$$u_0\in[0,w_+]=\{y\in W^{1,p}_0(\Omega):0\leq y(z)\leq w_+(z)\ \mbox{for almost all}\ z\in\Omega\}.$$
	
	Then on account of (\ref{eq66}), equation (\ref{eq68}) becomes
	\begin{eqnarray}\label{eq69}
		&&\left\langle A_p(u_0),h\right\rangle+\left\langle A(u_0),h\right\rangle=\int_{\Omega}f(z,u_0)hdz\ \mbox{for all}\ h\in W^{1,p}_0(\Omega),\nonumber\\
		&\Rightarrow&-\Delta_pu_0(z)-\Delta u_0(z)=f(z,u_0(z))\ \mbox{for almost all}\ z\in\Omega,\ u_0|_{\partial\Omega}=0,\\
		&\Rightarrow&u_0\in C_+\ (\mbox{by the nonlinear regularity theory, see Lieberman \cite{19}})\nonumber.
	\end{eqnarray}
	
	Since $p>2$, given $\epsilon>0$, we can find $\delta_0\in(0,\min\{\delta,C_+\})$ ($\delta>0$ as in hypothesis $H_2(v)$) such that
	\begin{equation}\label{eq70}
		\frac{1}{p}|y|^p\leq\frac{\epsilon}{2}|y|^2\ \mbox{for all}\ y\in\RR^N\ \mbox{with}\ |y|\leq\delta_0.
	\end{equation}
	
	Recall that $\hat{u}_1(2)\in {\rm int}\,C_+$. So, we can find  small $t\in(0,1)$ such that
	$$||t\hat{u}_1(2)||_{C^1_0(\overline{\Omega})}\leq\delta_0.$$
	
	We have
	\begin{eqnarray*}
		 \hat{\varphi}_+(t\hat{u}_1(2))&\leq&\frac{\epsilon+1}{2}t^2||D\hat{u}_1(2)||^2_2-\frac{1}{2}t^2\int_{\Omega}\eta(z)\hat{u}_1(2)^2dz\\
		&&(\mbox{see (\ref{eq70}), (\ref{eq66}) and hypothesis}\ H_2(v),\ \mbox{since}\ \delta_0\leq\delta)\\
		 &=&t^2\left[\frac{\epsilon}{2}\hat{\lambda}_1(2)
||\hat{u}_1(2)||^2_2
-\frac{1}{2}\int_{\Omega}(\eta(z)-\hat{\lambda}_1(2))\hat{u}_1(2)^2dz)\right]\\
		&<&0\ \mbox{choosing}\ \epsilon>0\ \mbox{small (see Lemma \ref{lem2}(b))},\\
		\Rightarrow\hat{\varphi}_+(u_0)<0&=&\hat{\varphi}_+(0)\ (\mbox{see (\ref{eq67})}),\\
		\Rightarrow u_0\neq 0.&&
	\end{eqnarray*}
	
	Let $\rho=||u_0||_{\infty}$ and let $\hat{\xi}_{\rho}>0$ be as postulated by hypothesis $H_1'(vi)$. Then by (\ref{eq69}), we have
	\begin{equation}\label{eq71}
		\Delta_pu_0(z)+\Delta u_0(z)\leq\hat{\xi}_{\rho}u_0(z)^{p-1}\ \mbox{for almost all}\ z\in\Omega\,.
	\end{equation}
	
	Let $V(y)=|y|^{p-2}y+y$ for all $y\in\RR^N$. Evidently,
	$${\rm div}\, (V(Du))=\Delta_pu+\Delta u\ \mbox{for all}\ u\in W^{1,p}_0(\Omega).$$
	
	We have $V\in C^1(\RR^N,\RR^N)$ and
	\begin{eqnarray}\label{eq72}
		&&\nabla V(y)=|y|^{p-2}\left[I+(p-2)\frac{y\oplus y}{|y|^2}\right]+I,\nonumber\\
		&\Rightarrow&(\nabla V(y)\xi,\xi)_{\RR^N}\geq|\xi|^2\ \mbox{for all}\ y\in\RR^N,\ \mbox{all}\ \xi\in\RR^N.
	\end{eqnarray}
	
	Then (\ref{eq72}), (\ref{eq71}) and the tangency principle of Pucci and Serrin \cite[Theorem 2.5.2, p. 35]{30}, imply that
	$$0<u_0(z)\ \mbox{for all}\ z\in\Omega\,.$$
	
	Next, using the boundary point lemma (see Pucci and Serrin \cite[Theorem 5.5.1, p. 120]{30}), we obtain
	\begin{equation}\label{eq73}
		u_0\in {\rm int}\,C_+.
	\end{equation}
	
	Also, hypothesis $H_2(iv)$ implies
	\begin{equation}\label{eq74}
		A_p(u_0)+A(u_0)-N_f(u_0)=0\leq A_p(w_+)+A(w_+)-N_f(w_+)\ \mbox{in}\ W^{-1,p'}(\Omega).
	\end{equation}
	
	So, once more (\ref{eq72}), (\ref{eq74}) and the tangency principle of Pucci and Serrin \cite[p. 35]{30}, imply that
	\begin{eqnarray*}
		&&u_0(z)<w_+(z)\ \mbox{for all}\ z\in\Omega,\\
		&\Rightarrow&u_0(z)<w_+(z)\ \mbox{for all}\ z\in\overline{\Omega}\ (\mbox{see hypothesis}\ H_2(iv)).
	\end{eqnarray*}
	
	Similarly, to produce the negative solution, we introduce the Carath\'eodory function
	\begin{equation}\label{eq75}
		\hat{f}_-(z,x)=\left\{\begin{array}{ll}
			f(z,w_-(z))&\mbox{if}\ x<w_-(z)\\
			f(z,x)&\mbox{if}\ w_-(z)\leq x\leq 0\\
			0&\mbox{if}\ 0<x.
		\end{array}\right.
	\end{equation}
	
	We set $\hat{F}_-(z,x)=\int^x_0\hat{f}_-(z,s)ds$ and consider the $C^1$-functional $\hat{\varphi}_-:W^{1,p}_0(\Omega)\rightarrow\RR$ defined by
	$$\hat{\varphi}_-(u)=\frac{1}{p}||Du||^p_p+\frac{1}{2}||Du||^2_2-\int_{\Omega}\hat{F}_-(z,u)dz\ \mbox{for all}\ u\in W^{1,p}_0(\Omega).$$
	
	Working with $\hat{\varphi}_-$ and using (\ref{eq73}), we produce $v_0\in W^{1,p}_0(\Omega)$ solution of (\ref{eq1}) such that
	$$v_0\in-{\rm int}\,C_+,\ w_-(z)<v_0(z)\ \mbox{for all}\ z\in\overline{\Omega}.$$
\end{proof}

In fact, we can show that we have extremal constant sign solutions in the order intervals $[0,w_+]$ and $[w_-,0]$, that is, we show that there is a smallest positive solution $u_*\in {\rm int}\,C_+$ in $[0,w_+]$ and a biggest negative solution $v_*\in-{\rm int}\,C_+$ in $[w_-,0]$.
\begin{prop}\label{prop11}
	If hypotheses $H_2(i),(iv),(v),(vi)$ hold, then problem (\ref{eq1}) admits a smallest positive solution $u_*\in {\rm int}\,C_+$ in $[0,w_+]$ and a biggest negative solution $v_*\in-{\rm int}\,C_+$ in $[w_-,0]$.
\end{prop}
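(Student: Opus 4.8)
The plan is to use the standard Papageorgiou-type argument for producing extremal constant sign solutions, based on the monotone behaviour of the truncated problems together with a minimal-type solution obtained via a decreasing iteration. I will describe it only for the smallest positive solution $u_*\in[0,w_+]$; the negative one is symmetric, working with $[w_-,0]$.

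\textbf{Step 1: A strong comparison/lower bound near zero.} First I would show that there is a function $\underline{u}\in\mathrm{int}\,C_+$ with $\underline{u}\le w_+$ which is a subsolution of \eqref{eq1} and which lies below every positive solution in $[0,w_+]$. To this end, fix $\mu\in(\hat\lambda_l(2),\hat\lambda_{l+1}(2))$ (or simply use $\hat\lambda_1(2)<\eta$ as in the proof of Proposition~\ref{prop10}) and consider the auxiliary purely absorptive problem $-\Delta_p v-\Delta v=\eta_0(z)|v|^{\,?}\dots$; more precisely, using hypothesis $H_2(v)$, the function $x\mapsto f(z,x)$ satisfies $f(z,x)\ge\eta(z)x$ for $0\le x\le\delta$, so I would solve the problem $-\Delta_p v-\Delta v=\eta(z)v$ for $v$ small, or rather scale the first eigenfunction: take $\underline{u}=t\hat u_1(2)$ with $t>0$ small enough that $\|t\hat u_1(2)\|_{C^1_0(\overline\Omega)}\le\min\{\delta,\|w_+\|_\infty\}$. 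Then, using $-\Delta(t\hat u_1(2))=t\hat\lambda_1(2)\hat u_1(2)$ and $\Delta_p(t\hat u_1(2))=t^{p-1}\Delta_p\hat u_1(2)$ which is $o(t)$ as $t\to0$ since $p>2$, and $f(z,t\hat u_1(2))\ge\eta(z)t\hat u_1(2)\ge\hat\lambda_1(2)t\hat u_1(2)$ (using $\eta\ge\hat\lambda_l\ge\hat\lambda_1$), one checks $-\Delta_p\underline{u}-\Delta\underline{u}\le f(z,\underline u)$ in $\Omega$ for $t$ small. So $\underline u\in\mathrm{int}\,C_+$ is a subsolution with $\underline u\le w_+$.

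\textbf{Step 2: The solution set is downward directed and bounded below.} Let $S_+$ denote the set of solutions of \eqref{eq1} in $[0,w_+]$. By Proposition~\ref{prop10}, $\emptyset\ne S_+\subseteq\mathrm{int}\,C_+$ (an easy bootstrap via $H_2(v)$ and the nonlinear maximum principle, exactly as in \eqref{eq73}, shows every nontrivial solution in $[0,w_+]$ lies in $\mathrm{int}\,C_+$). I would then prove: (a) every $u\in S_+$ satisfies $\underline u\le u$ — this follows by the weak comparison principle using $H_2(vi)$, i.e. the map $u\mapsto f(z,u)+\hat\xi_\rho u^{p-1}$ is nondecreasing on $[0,\rho]$ with $\rho=\|w_+\|_\infty$, so that $u$ and $\underline u$ are respectively a supersolution and subsolution of the monotone operator $u\mapsto A_p(u)+A(u)+\hat\xi_\rho u^{p-1}$; (b) $S_+$ is downward directed, i.e. given $u_1,u_2\in S_+$ there is $u\in S_+$ with $u\le\min\{u_1,u_2\}$. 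For (b) I would use the standard truncation trick: truncate $f$ between $0$ and $\min\{u_1,u_2\}$, add the penalisation $\hat\xi_\rho|x|^{p-2}x$ to make the energy functional coercive and the operator monotone, minimise, and check via $h=(u-\min\{u_1,u_2\})^+$ and $h=-u^-$ test functions that the minimiser lies in $[\underline u,\min\{u_1,u_2\}]$ hence solves \eqref{eq1}; the strong inequality $\underline u\le u$ upgrades it into $\mathrm{int}\,C_+$.

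\textbf{Step 3: Pass to the infimum.} By a result of Dunford–Pettis / Hu–Papageorgiou on downward directed sets, there is a decreasing sequence $\{u_n\}_{n\ge1}\subseteq S_+$ with $u_n\downarrow u_*=\inf S_+$ pointwise (and in $L^p$). Since $0\le u_n\le w_+$ with $w_+\in L^\infty$, hypothesis $H_2(i)$ gives $\|N_f(u_n)\|_\infty\le a_\rho$, so testing the equations for $u_n$ with $h=u_n$ shows $\{u_n\}$ is bounded in $W^{1,p}_0(\Omega)$; along a subsequence $u_n\stackrel{w}{\to}u_*$ in $W^{1,p}_0(\Omega)$ and $u_n\to u_*$ in $L^p(\Omega)$. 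Testing the equation for $u_n$ with $h=u_n-u_*$ and using that $A(\cdot)$ is monotone and $N_f(u_n)\to N_f(u_*)$ in $L^{p'}(\Omega)$ (dominated convergence), the $(S)_+$-property of $A_p$ (Proposition~\ref{prop3}) yields $u_n\to u_*$ in $W^{1,p}_0(\Omega)$, and in the limit $A_p(u_*)+A(u_*)=N_f(u_*)$, i.e. $u_*$ solves \eqref{eq1} and $u_*\in[0,w_+]$. It remains to show $u_*\ne0$: this is where Step~1 pays off — since $\underline u\le u_n$ for all $n$ by Step~2(a), passing to the limit gives $\underline u\le u_*$, and as $\underline u\in\mathrm{int}\,C_+$ we get $u_*\in\mathrm{int}\,C_+$, in particular $u_*\ne0$. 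By construction $u_*=\inf S_+$, so $u_*$ is the smallest positive solution in $[0,w_+]$.

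\textbf{Symmetric case and main obstacle.} The biggest negative solution $v_*\in-\mathrm{int}\,C_+$ in $[w_-,0]$ is obtained by the mirror argument, with $\overline v=-t\hat u_1(2)$ a supersolution for small $t$ and $S_-$ upward directed. I expect the main technical obstacle to be Step~2: establishing that $S_+$ is downward directed together with the uniform lower bound $\underline u\le u$ for all $u\in S_+$ — this is precisely where hypothesis $H_2(vi)$ is essential, as it is what allows one to convert \eqref{eq1} into an equation governed by a monotone operator on the order interval and thus to invoke weak comparison; without the penalisation term $\hat\xi_\rho|x|^{p-2}x$ the operator need not be monotone and the comparison fails. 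The coupling of the $p$-Laplacian with the Laplacian causes no extra difficulty here beyond the bookkeeping already displayed in the proof of Proposition~\ref{prop10}, since $V(y)=|y|^{p-2}y+y$ is still strictly monotone and $C^1$ with $\nabla V\ge I$, which is exactly what the Pucci–Serrin tangency and boundary-point lemmas require to upgrade $u_*\ge\underline u$ to $u_*\in\mathrm{int}\,C_+$.
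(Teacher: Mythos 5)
Your overall architecture (establish a uniform positive lower bound valid for \emph{every} solution in $[0,w_+]$, then pass to the infimum via Hu--Papageorgiou and the $(S)_+$ property of $A_p$) is the same as the paper's, and your Step 3 is fine. The gap is in Steps 1 and 2(a), which are the crux. First, $\underline u=t\hat u_1(2)$ is not in general a pointwise subsolution of (\ref{eq1}): the required inequality reduces to $t^{p-1}\bigl(-\Delta_p\hat u_1(2)\bigr)\le\bigl(\eta(z)-\hat\lambda_1(2)\bigr)t\,\hat u_1(2)$ a.e., and since $\hat u_1(2)$ is an eigenfunction of $-\Delta$, not of $-\Delta_p$, the left-hand side is $t^{p-1}$ times a bounded function that need not vanish on $\partial\Omega$ (it contains the Hessian term $-(p-2)|D\hat u_1(2)|^{p-4}(D^2\hat u_1(2)D\hat u_1(2),D\hat u_1(2))$, which can be strictly positive near non-convex portions of the boundary), while the right-hand side does vanish there; moreover for $l=1$ the factor $\eta(z)-\hat\lambda_1(2)$ may vanish on a set of positive measure. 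This is exactly why the paper uses $t\hat u_1(2)$ only in integrated (energy) form, via (\ref{eq70}). Second, your comparison in Step 2(a) has the monotonicity backwards: testing the difference of the inequalities for $\underline u$ and $u$ with $(\underline u-u)^+$, you would need $f(z,\underline u)-f(z,u)\le\hat\xi_\rho(u^{p-1}-\underline u^{p-1})$ on $\{\underline u>u\}$, i.e.\ $x\mapsto f(z,x)+\hat\xi_\rho x^{p-1}$ \emph{nonincreasing}; hypothesis $H_2(vi)$ asserts the opposite, so the reaction difference is only bounded from below and the weak comparison does not close.

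The paper circumvents both difficulties with a different device. It introduces the auxiliary problem $-\Delta_pu-\Delta u=\eta(z)u-c_{13}u^{p-1}$, with $c_{13}$ chosen as in (\ref{eq80}) so that the auxiliary reaction lies below $f(z,\cdot)$ on $[0,w_+]$, and proves that this problem has a \emph{unique} nontrivial solution $\bar u\in[0,w_+]\cap{\rm int}\,C_+$, the uniqueness coming from the Diaz--Saa/Benguria--Brezis--Lieb convexity of $u\mapsto\frac{1}{p}\|Du^{1/2}\|_p^p+\frac{1}{2}\|Du^{1/2}\|_2^2$. Then, for each $u\in\hat S_+$, it minimizes the auxiliary energy truncated at $u$; the minimizer is nontrivial (energy argument with $t\hat u_1(2)$) and lies in $[0,u]$, hence by uniqueness equals $\bar u$. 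This yields $\bar u\le u$ for all $u\in\hat S_+$, so $u_*\ge\bar u>0$ in the limit. If you wish to keep your structure, you must replace Steps 1--2(a) by this uniqueness mechanism (or an equivalent one); as written, your proof that $u_*\ne 0$ does not go through.
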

\begin{proof}
	First we produce the smallest positive solution in $[0,w_+]$.
	
	Let $\hat{S}_+$ be the set of positive solutions of problem (\ref{eq1}) in the order interval $[0,w_+]$. From Proposition \ref{prop10} and its proof, we have
	$$\hat{S}_+\neq\emptyset\ \mbox{and}\ \hat{S}_+\subseteq[0,w_+]\cap {\rm int}\,C_+.$$
	
	Invoking Lemma 3.10, p. 178, of Hu and Papageorgiou \cite{16}, we infer that we can find\\ $\{u_n\}_{n\geq 1}\subseteq\hat{S}_+$ such that
	$$\inf\hat{S}_+=\inf\limits_{n\geq 1}u_n.$$
	
	We have
	\begin{eqnarray}\label{eq76}
		&&A_p(u_n)+A(u_n)=N_f(u_n),\ 0\leq u_n\leq w_+\ \mbox{for all}\ n\in\NN,\\
		&\Rightarrow&\{u_n\}_{n\geq 1}\subseteq W^{1,p}_0(\Omega)\ \mbox{is bounded}\nonumber.
	\end{eqnarray}
	
	So, we may assume that
	\begin{equation}\label{eq77}
		u_n\stackrel{w}{\rightarrow}u_*\ \mbox{in}\ W^{1,p}_0(\Omega)\ \mbox{and}\ u_n\rightarrow u_*\ \mbox{in}\ L^p(\Omega)\ \mbox{as}\ n\rightarrow\infty.
	\end{equation}
	
	On (\ref{eq76}) we act with $u_n-u_*\in W^{1,p}_0(\Omega)$, pass to the limit as $n\rightarrow\infty$ and use (\ref{eq77}). Then
	\begin{eqnarray}\label{eq78}
		&&\lim\limits_{n\rightarrow\infty}\left[\left\langle A_p(u_n),u_n-u_*\right\rangle+\left\langle A(u_n),u_n-u_*\right\rangle\right]=0,\nonumber\\
		&\Rightarrow&\limsup\limits_{n\rightarrow\infty}\left[\left\langle A_p(u_n),u_n-u_*\right\rangle+\left\langle A(u_*),u_n-u_*\right\rangle\right]\leq 0\ (\mbox{recall $A$ is monotone})\nonumber\\
		&\Rightarrow&\limsup\limits_{n\rightarrow\infty}\left\langle A_p(u_n),u_n-u_*\right\rangle\leq 0\ (\mbox{see (\ref{eq77})})\nonumber\\
		&\Rightarrow&u_n \rightarrow u_*\ \mbox{in}\ W^{1,p}_0(\Omega)\ (\mbox{see Proposition \ref{prop3}}).
	\end{eqnarray}
	
	Passing to the limit as $n\rightarrow\infty$ in (\ref{eq76}) and using (\ref{eq78}), we obtain
	\begin{eqnarray}\label{eq79}
		&&A_p(u_*)+A(u_*)=N_f(u_*),\ 0\leq u_*\leq w_+,\\
		&\Rightarrow&-\Delta_pu_*(z)-\Delta u_*(z)=f(z,u_*(z))\ \mbox{for almost all}\ z\in\Omega,\ u_*|_{\partial\Omega}=0,\ 0\leq u_*\leq w_+.\nonumber
	\end{eqnarray}
	
	Then $u_*\in C_+$ (by the nonlinear regularity theory, see Lieberman \cite{19}) is a nonnegative solution of (\ref{eq1}). If we can show that $u_*\neq 0$, then $u_*\in \hat{S}_+$ and $u_*=\inf\hat{S}_+$.
	
	To this end, we proceed as follows. Hypotheses $H_2(i),(v)$ imply that we can find $c_{13}>0$ such that
	\begin{equation}\label{eq80}
		f(z,x)\geq \eta(z)x-c_{13}x^{p-1}\ \mbox{for almost all}\ z\in\Omega,\ \mbox{all}\ 0\leq x\leq w_+(z).
	\end{equation}
	
	Let $g:\Omega\times\RR\rightarrow\RR$ be the Carath\'eodory function defined by
	\begin{equation}\label{eq81}
		g(z,x)=\left\{\begin{array}{ll}
			0&\mbox{if}\ x<0\\
			\eta(z)x-c_{13}x^{p-1}&\mbox{if}\ 0\leq x\leq w_+(z)\\
			\eta(z)w_+(z)-c_{13}w_+(z)^{p-1}&\mbox{if}\ w_+(z)<x.
		\end{array}\right.
	\end{equation}
	
	We consider the auxiliary Dirichlet problem
	\begin{equation}\label{eq82}
		-\Delta_pu(z)-\Delta u(z)=g(z,u(z))\ \mbox{in}\ \Omega,\ u|_{\partial\Omega}=0.
	\end{equation}
	
	We claim that this problem has a unique solution $\bar{u}\in {\rm int}\,C_+$. First, we show the existence of a nontrivial solution. So, let $\psi_+:W^{1,p}_0(\Omega)\rightarrow\RR$ be the energy (Euler) functional for problem (\ref{eq82}) defined by
	$$\psi_+(u)=\frac{1}{p}||Du||^p_p+\frac{1}{2}||Du||^2_2-\int_{\Omega}G(z,u)dz\ \mbox{for all}\ u\in W^{1,p}_0(\Omega),$$
	where $G(z,x)=\int^x_0g(z,s)ds$. Evidently, $\psi_+$ is coercive (see (\ref{eq81})) and sequentially weakly lower semicontinuous. So, we can find $\bar{u}\in W^{1,p}_0(\Omega)$ such that
	$$\psi_+(\bar{u})=\inf[\psi_+(u):u\in W^{1,p}_0(\Omega)].$$
	
	As in the proof of Proposition \ref{prop10}, using hypothesis $H_2(v)$, we have
	\begin{eqnarray*}
		&&\psi_+(\bar{u})<0=\psi_+(0)\ \mbox{and}\ \bar{u}\in[0,w_+]\ (\mbox{see (\ref{eq81})}),\\
		&\Rightarrow&\bar{u}\in K_{\psi_+}\subseteq[0,w_+]\cap {\rm int}\,C_+.
	\end{eqnarray*}
	
	Next, we show that this solution is unique. For this purpose, we consider the integral functional $j:L^1(\Omega)\rightarrow\overline{\RR}=\RR\cup\{+\infty\}$ defined by
	$$j(u)=\left\{\begin{array}{ll}
		\frac{1}{p}||Du^{1/2}||^p_p+\frac{1}{2}||Du^{1/2}||^2_2&\mbox{if}\ u\geq 0,\ u^{1/2}\in W^{1,p}_0(\Omega)\\
		+\infty&\mbox{otherwise}.
	\end{array}\right.$$
	
	By Lemma 4 of Benguria, Brezis and Lieb \cite{6} and Lemma 1 of Diaz and Saa \cite{12}, we have that $j(\cdot)$ is convex.
	
	Suppose that $\bar{y}\in W^{1,p}_0(\Omega)$ is another nontrivial solution of (\ref{eq82}). Then again we have $\bar{y}\in[0,w_+]\cap {\rm int}\,C_+$. Let ${\rm dom}\,j=\{u\in L^1(\Omega):j(u)<+\infty\}$ (the effective domain of $j$). For every $h\in C^1_0(\overline{\Omega})$, we have
	$$\bar{u}^2+th\in {\rm dom}\, j\ \mbox{and}\ \bar{y}^2+th\in {\rm dom}\, j\ \mbox{for}\ |t|\leq 1\ \mbox{small}.$$
	
	Then we can easily see that $j(\cdot)$ is G\^ateaux differentiable at $\bar{u}^2$ and at $\bar{y}^2$ in the direction $h$. Moreover, using the nonlinear Green's identity (see, for example, Gasinski and Papageorgiou \cite[p. 211]{15}), we have
	\begin{eqnarray*}
		 &&j'(\bar{u}^2)(h)=\frac{1}{2}\int_{\Omega}\frac{-\Delta_p\bar{u}-\Delta\bar{u}}{\bar{u}}hdz=\frac{1}{2}\int_{\Omega}[\eta(z)-c_{13}\bar{u}^{p-2}]hdz\\
		 &&j'(\bar{y}^2)(h)=\frac{1}{2}\int_{\Omega}\frac{-\Delta_p\bar{y}-\Delta\bar{y}}{\bar{y}}hdz=\frac{1}{2}\int_{\Omega}[\eta(z)-c_{13}\bar{y}^{p-2}]hdz\ (\mbox{see (\ref{eq82}), (\ref{eq81})}).
	\end{eqnarray*}
	
	The convexity of $j(\cdot)$, implies monotonicity of $j'(\cdot)$. Hence
	\begin{eqnarray*}
		&&0\leq\int_{\Omega}[\bar{y}^{p-2}-\bar{u}^{p-2}](\bar{u}^2-\bar{y}^2)dz,\\
		&\Rightarrow&\bar{u}=\bar{y}.
	\end{eqnarray*}
	
	This proves the uniqueness of the nontrivial solution $\bar{u}\in[0,w_+]\cap {\rm int}\,C_+$ of the auxiliary problem (\ref{eq82}).
	\begin{claim}
		$\bar{u}\leq u$ for all $u\in\hat{S}_+$.
	\end{claim}
	
	Let $u\in\hat{S}_+$ and consider the Carath\'eodory function $k:\Omega\times\RR\rightarrow\RR$ defined by
	\begin{equation}\label{eq83}
		k(z,x)=\left\{\begin{array}{ll}
			0&\mbox{if}\ x<0\\
			\eta(z)x-c_{13}x^{p-1}&\mbox{if}\ 0\leq x\leq u(z)\\
			\eta(z)u(z)-c_{13}u(z)^{p-1}&\mbox{if}\ u(z)<x.
		\end{array}\right.
	\end{equation}
	
	We set $K(z,x)=\int^x_0k(z,s)ds$ and consider the $C^1$-functional $\hat{\psi}_+:W^{1,p}_0(\Omega)\rightarrow\RR$ defined by
	$$\hat{\psi}_+(u)=\frac{1}{p}||Du||^p_p+\frac{1}{2}||Du||^2_2-\int_{\Omega}K(z,u)dz\ \mbox{for all}\ u\in W^{1,p}_0(\Omega).$$
	
	Again, $\hat{\psi}_+$ is coercive (see (\ref{eq83})) and sequentially weakly lower semicontinuous. So, we can find $\tilde{u}\in W^{1,p}_0(\Omega)$ such that
	\begin{equation}\label{eq84}
		\hat{\psi}_+(\tilde{u})=\inf[\hat{\psi}_+(u):u\in W^{1,p}_0(\Omega)].
	\end{equation}
	
	Let $t\in(0,1)$ be small such that $t\hat{u}_1(2)\leq u$ (see Proposition 2.1 of Marano and Papageorgiou \cite{20} and recall that $u\in {\rm int}\,C_+$). Then by taking $t\in(0,1)$ even smaller if necessary and using hypothesis $H_2(v)$, we have
	\begin{eqnarray*}
		&&\hat{\psi}_+(t\hat{u}_1(2))<0,\\
		&\Rightarrow&\hat{\psi}_+(\tilde{u})<\hat{\psi}_+(0)=0,\ \mbox{hence}\ \tilde{u}\neq 0.
	\end{eqnarray*}
	
	Using (\ref{eq80}) and the fact that $u\in\hat{S}_+$, we show that $K_{\hat{\psi}_+}\subseteq[0,u]$. From (\ref{eq84}) we have
	\begin{eqnarray*}
		&&\tilde{u}\in K_{\hat{\psi}_+}\backslash\{0\}\subseteq[0,u]\backslash\{0\}\\
		&\Rightarrow&\tilde{u}=\bar{u}\ (\mbox{see (\ref{eq83}) and recall that}\ \bar{u}\ \mbox{is the unique solution of (\ref{eq82})})\\
		&\Rightarrow&\bar{u}\leq u\ \mbox{for all}\ u\in \hat{S}_+.
	\end{eqnarray*}
	
	This proves the claim.
	
	On account of Claim 4 we have $\bar{u}\leq u_*$ and so
	$$u_*\in\hat{S}_+,\ u_*=\inf\hat{S}_+\,.$$
	
	Similarly, if $\hat{S}_-$ is the set of negative solutions of (\ref{eq1}) in $[w_-,0]$, then
	$$\hat{S}_-\neq\emptyset\ \mbox{and}\ \hat{S}_-\subseteq[w_-,0]\cap(-{\rm int}\,C_+)$$
	(see Proposition \ref{prop10} and its proof). Reasoning as above, we show that there exists $v_*\in[w_-,0]\cap(-{\rm int}\,C_+)$ the biggest negative solution of (\ref{eq1}) in $[w_-,0]$.
\end{proof}

Using these extremal constant sign solutions of (\ref{eq1}), we can generate a nodal (that is, sign changing) solution. To do this, we need a slightly stronger condition on $f(z,\cdot)$ near zero (see hypothesis $H_2(v)$). The new hypotheses on the reaction $f(z,x)$ are the following:

\smallskip
$H_3:$ The conditions on the Carath\'eodory function $f:\Omega\times\RR\rightarrow\RR$ are the same as in $H_2$ the only difference being that in $H_3(v)$ we have $l\geq 2$.
\begin{prop}\label{prop12}
	If hypotheses $H_3(i),(iv),(v),(vi)$ hold, then problem (\ref{eq1}) admits a nodal solution $y_0\in[v_*,u_*]\cap C^1_0(\bar{\Omega})$.
\end{prop}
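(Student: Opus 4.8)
The plan is to use the extremal constant-sign solutions $u_*\in{\rm int}\,C_+$ and $v_*\in-{\rm int}\,C_+$ produced in Proposition \ref{prop11} as barriers, truncate the reaction outside the order interval $[v_*,u_*]$, and then extract a third critical point of the resulting coercive energy functional by a mountain-pass argument, ruling out the trivial solution by a critical-groups computation at the origin, where the hypothesis $l\ge 2$ is decisive. Concretely, I would first introduce the truncation
$$\hat f(z,x)=\left\{\begin{array}{ll}f(z,v_*(z))&\mbox{if}\ x<v_*(z)\\ f(z,x)&\mbox{if}\ v_*(z)\le x\le u_*(z)\\ f(z,u_*(z))&\mbox{if}\ u_*(z)<x,\end{array}\right.$$
together with the one-sided truncations $\hat f_+$ (replacing the first line by $0$ for $x<0$) and $\hat f_-$ (replacing the last line by $0$ for $x>0$), and the corresponding $C^1$-functionals $\hat\varphi,\hat\varphi_\pm$ on $W^{1,p}_0(\Omega)$ built from the primitives of these truncations. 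Each of $\hat\varphi,\hat\varphi_\pm$ is coercive (the truncated reaction is dominated by the $p$-Laplacian term) and sequentially weakly lower semicontinuous, hence attains its infimum; coercivity also forces the C-condition. Acting on the Euler equation of $\hat\varphi$ with the test functions $(u-u_*)^+$ and $(v_*-u)^+$, using that $u_*,v_*$ solve (\ref{eq1}) and the monotonicity of $A_p$ and $A$ (Proposition \ref{prop3}), gives $K_{\hat\varphi}\subseteq[v_*,u_*]$; likewise $K_{\hat\varphi_+}\subseteq[0,u_*]$ and $K_{\hat\varphi_-}\subseteq[v_*,0]$. Since $\hat f=f$ on $[v_*,u_*]$, every element of $K_{\hat\varphi}$ solves (\ref{eq1}) and, by the nonlinear regularity theory, belongs to $C^1_0(\overline\Omega)$.

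Next I would show that $u_*$ and $v_*$ are local minimizers of $\hat\varphi$ in $W^{1,p}_0(\Omega)$. Any nontrivial $u\in K_{\hat\varphi_+}$ lies in $[0,u_*]\cap{\rm int}\,C_+$, hence $u\in\hat S_+$, so by the extremality $u_*=\inf\hat S_+$ we get $u=u_*$; thus $K_{\hat\varphi_+}\subseteq\{0,u_*\}$. On the other hand the computation with $t\hat u_1(2)$ from the proof of Proposition \ref{prop10} (using hypothesis $H_3(v)$ and Lemma \ref{lem2}(b)) shows $\inf\hat\varphi_+<0=\hat\varphi_+(0)$, so the global minimizer of $\hat\varphi_+$ is nontrivial, hence equals $u_*$. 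Because $u_*\in{\rm int}\,C_+$, a small $C^1_0(\overline\Omega)$-ball around $u_*$ is contained in $C_+$, and on that ball $\hat\varphi$ coincides with $\hat\varphi_+$; thus $u_*$ is a local $C^1_0(\overline\Omega)$-minimizer of $\hat\varphi$, and Proposition \ref{prop4}, applied to the truncated functional, upgrades this to a local $W^{1,p}_0(\Omega)$-minimizer. The symmetric argument with $\hat\varphi_-$ and $v_*\in-{\rm int}\,C_+$ gives that $v_*$ is also a local $W^{1,p}_0(\Omega)$-minimizer of $\hat\varphi$.

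Now, since $\hat\varphi\in C^1(W^{1,p}_0(\Omega))$ satisfies the C-condition and $u_*\neq v_*$ are two of its local minimizers, the standard mountain-pass consequence for functionals with two distinct local minima (see, e.g., Chang \cite{8} or Motreanu, Motreanu and Papageorgiou \cite{21}) yields $y_0\in K_{\hat\varphi}\setminus\{u_*,v_*\}$ with $C_1(\hat\varphi,y_0)\neq0$. To see that $y_0\neq0$, note that the order interval $[v_*,u_*]$ contains a $C^1_0(\overline\Omega)$-neighborhood of $0$ (since $v_*\in-{\rm int}\,C_+$ and $u_*\in{\rm int}\,C_+$), so any sequence tending to $0$ in $C^1_0(\overline\Omega)$ eventually lies in $[v_*,u_*]$, where $\hat f=f$; hence the argument of Proposition \ref{prop8} applies verbatim and gives $C_k(\hat\varphi,0)=\delta_{k,d_l}\ZZ$ for all $k\in\NN_0$. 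Since $l\ge 2$ we have $d_l={\rm dim}\,\bar H_l\ge{\rm dim}\,\bar H_2\ge 2$, whence $C_1(\hat\varphi,0)=0$ and therefore $y_0\neq0$. Finally $y_0\in K_{\hat\varphi}\subseteq[v_*,u_*]\subseteq[w_-,w_+]$ solves (\ref{eq1}): if $y_0\ge0$ then $y_0\in\hat S_+$, forcing $u_*\le y_0\le u_*$, i.e.\ $y_0=u_*$, a contradiction; if $y_0\le0$ then $y_0\in\hat S_-$ forces $y_0=v_*$, again impossible. Hence $y_0$ changes sign, and by the nonlinear regularity theory $y_0\in[v_*,u_*]\cap C^1_0(\overline\Omega)$, which is the assertion.

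I expect the main obstacle to be the step asserting that $u_*$ and $v_*$ are local $W^{1,p}_0(\Omega)$-minimizers of $\hat\varphi$: this rests on the equivalence of $C^1_0(\overline\Omega)$- and $W^{1,p}_0(\Omega)$-minimizers (Proposition \ref{prop4}), which has to be applied to the truncated functional, together with the identification of $\hat\varphi$ with $\hat\varphi_+$ (resp.\ $\hat\varphi_-$) on a $C^1_0(\overline\Omega)$-neighborhood of $u_*$ (resp.\ $v_*$). A secondary delicate point is checking that the critical-groups computation at the origin transfers from $\varphi$ to $\hat\varphi$, which hinges precisely on $0$ being interior to $[v_*,u_*]$ in $C^1_0(\overline\Omega)$; the remaining steps are routine truncation-and-comparison arguments of the kind already used in Propositions \ref{prop10} and \ref{prop11}.
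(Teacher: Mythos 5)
Your proposal is correct and follows the paper's strategy almost step for step: the same truncation of $f$ at the extremal solutions $v_*,u_*$ (the paper's $e$, $\tau$, $\tau_\pm$ are your $\hat f$, $\hat\varphi$, $\hat\varphi_\pm$), the same identification of $u_*,v_*$ as local minimizers via the one-sided truncations and Proposition \ref{prop4}, the same mountain-pass production of $y_0$ with $C_1\neq 0$, the same use of $l\geq 2$ (hence $d_l\geq 2$) to separate $y_0$ from $0$, and the same extremality argument to force a sign change. The one place you genuinely diverge is the computation of the critical groups of the truncated functional at the origin, and there the phrase ``applies verbatim'' overstates things: hypothesis $H_0(ii)$ actually \emph{fails} for your $\hat f$, because $u_*$ and $v_*$ vanish on $\partial\Omega$, so for every $\delta>0$ there is a boundary layer where $\hat f(z,x)=f(z,u_*(z))$ for some $0<x\leq\delta$ with $x\gg u_*(z)$, and the lower bound $\eta(z)x^2\leq\hat f(z,x)x$ breaks down there; thus Proposition \ref{prop8} cannot be cited as a black box for $\hat\varphi$. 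What rescues your shortcut is that every use of $H_0(ii)$ in the proof of Proposition \ref{prop8} concerns only functions with $||u||_{C^1_0(\overline{\Omega})}\leq\delta$ (the pointwise estimate (\ref{eq49}) for $t\in(0,1]$, and the $t=0$ contradiction argument after the $C^{1,\alpha}$ bootstrap), so one may shrink $\delta$ until the corresponding $C^1_0(\overline{\Omega})$-ball sits inside $[v_*,u_*]$, where $\hat f=f$, and then rerun that proof; this needs to be said explicitly. The paper avoids reopening Proposition \ref{prop8} by introducing a second truncation at $w_\pm$ (for which $H_0$ holds literally, since $w_-\leq c_-<0<c_+\leq w_+$), applying Proposition \ref{prop8} to the resulting functional, and transferring the answer to $\tau$ via the homotopy invariance of critical groups (Corvellec--Hantoute \cite{11}); that transfer, as well as the strict mountain-pass inequality (\ref{eq87}), is where the paper invokes the standing reduction to the case $K_\tau$ finite, an assumption you should also record (in the infinite case one already has infinitely many nodal solutions). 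In short: your route is shorter but requires reproving a localized version of Proposition \ref{prop8}; the paper's detour keeps that proposition intact at the cost of an extra truncation and a homotopy argument.
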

\begin{proof}
	Let $u_*\in {\rm int}\,C_+$ and $v_*\in-{\rm int}\,C_+$ be the two extremal constant sign solutions of (\ref{eq1}) produced in Proposition \ref{prop11}. Let $e:\Omega\times\RR\rightarrow\RR$ be the Carath\'eodory function defined by
	\begin{equation}\label{eq85}
		e(z,x)=\left\{\begin{array}{ll}
			f(z,v_*(z))&\mbox{if}\ x<v_*(z)\\
			f(z,x)&\mbox{if}\ v_*(z)\leq x\leq u_*(z)\\
			f(z,u_*(z))&\mbox{if}\ u_*(z)<x.
		\end{array}\right.
	\end{equation}
	
	We set $E(z,x)=\int^x_0e(z,s)ds$ and consider the $C^1$-functional $\tau:W^{1,p}_0(\Omega)\rightarrow\RR$ defined by
	$$\tau(u)=\frac{1}{p}||Du||^p_p+\frac{1}{2}||Du||^2_2-\int_{\Omega}E(z,u)dz\ \mbox{for all}\ u\in W^{1,p}_0(\Omega).$$
	
	Also, we consider the positive and negative truncations of $e(z,\cdot)$, namely the Carath\'eodory functions
	$$e_{\pm}(z,x)=e(z,\pm x^{\pm}).$$
	
	We set $E_{\pm}(z,x)=\int^x_0e_{\pm}(z,s)ds$ and consider the $C^1$-functionals $\tau_{\pm}:W^{1,p}_0(\Omega)\rightarrow\RR$ defined by
	$$\tau_{\pm}(u)=\frac{1}{p}||Du||^p_p+\frac{1}{2}||Du||^2_2-\int_{\Omega}E_{\pm}(z,u)dz\ \mbox{for all}\ u\in W^{1,p}_0(\Omega).$$
	
	As before (see the proof of Proposition \ref{prop10}), using (\ref{eq85}), we can show that
	$$K_{\tau}\subseteq[v_*,u_*],\ K_{\tau_+}\subseteq[0,u_*],\ K_{\tau_-}\subseteq[v_*,0].$$
	
	The extremality of $u_*\in {\rm int}\,C_+$ and $v_*\in -{\rm int}\,C_+$ implies that
	\begin{equation}\label{eq86}
		K_{\tau}\subseteq[v_*,u_*],\ K_{\tau_+}=\{0,u_*\},\ K_{\tau_-}=\{0,v_*\}.
	\end{equation}
	\begin{claim}\label{cl1}
		$u_*\in {\rm int}\,C_+$ and $v_*\in-{\rm int}\,C_+$ are local minimizers of $\tau$.
	\end{claim}
	
	The functional $\tau_+$ is coercive (\ref{eq85}) and sequentially weakly lower semicontinuous. So, we can find $\hat{u}_*\in W^{1,p}_0(\Omega)$ such that
	$$\tau_+(\hat{u}_*)=\inf[\tau_+(u):u\in W^{1,p}_0(\Omega)].$$
	
	As in the proof of Proposition \ref{prop11} (see the part of the proof immediately after (\ref{eq84})), we have $\tau_+(\hat{u}_*)<0=\tau_+(0)$, hence $\hat{u}_*\neq 0$. Since $\hat{u}_*\in K_{\tau_+}=\{0,u_*\}$, it follows that $\hat{u}_*=u_*\in {\rm int}\,C_+$ (see (\ref{eq86})). Note that
	\begin{eqnarray*}
		&&\tau|_{C_+}=\tau_+|_{C_+},\\
		&\Rightarrow&u_*\in {\rm int}\,C_+\ \mbox{is a local}\ C^1_0(\overline{\Omega})-\mbox{minimizer of}\ \tau,\\
		&\Rightarrow&u_*\in {\rm int}\,C_+\ \mbox{is a local}\ W^{1,p}_0(\Omega)-\mbox{minimizer of}\ \tau\ (\mbox{see Proposition \ref{prop4}}).
	\end{eqnarray*}
	
	Similarly for $v_*\in-{\rm int}\,C_+$, using this time the functional $\tau_-$.
	
	This proves Claim \ref{cl1}.
	
	We may assume that
	$$\tau(v_*)\leq\tau(u_*).$$
	
	The reasoning is similar if the opposite inequality holds. Also, we may assume that $K_{\tau}$ is finite. Indeed, if $K_{\tau}$ is infinite, then on account of (\ref{eq86}) we see that we already have an infinity of nodal solutions, which belong to $C^1_0(\overline{\Omega})$ (nonlinear regularity theory). Then Claim \ref{cl1} implies that we can find $\rho\in(0,1)$ small such that
	\begin{equation}\label{eq87}
		\tau(v_*)\leq\tau(u_*)<\inf[\tau(u):||u-u_*||=\rho]=m_{\rho},\ ||v_*-u_*||>\rho
	\end{equation}
	(see Aizicovici, Papageorgiou and Staicu \cite{1}, proof of Proposition 29). The functional $\tau(\cdot)$ is coercive (see (\ref{eq85})) and so
	\begin{equation}\label{eq88}
		\tau(\cdot)\ \mbox{satisfies the C-condition}
	\end{equation}
	(see Papageorgiou and Winkert \cite{28}).
	
	Because of (\ref{eq87}), (\ref{eq88}), we see that we can apply Theorem \ref{th1} (the mountain pass theorem). So, we can find $y_0\in W^{1,p}_0(\Omega)$ such that
	\begin{equation}\label{eq89}
		y_0\in K_{\tau}\ \mbox{and}\ m_{\rho}\leq\tau(y_0).
	\end{equation}
	
	From (\ref{eq86}), (\ref{eq87}), (\ref{eq89}) and the nonlinear regularity theory (see \cite{19}), we infer that
	$$y_0\in[v_*,u_*]\cap C^1_0(\overline{\Omega}),\ y_0\notin\{v_*,u_*\}.$$
	
	Also, from Corollary 6.81, p. 168, of Motreanu, Motreanu and Papageorgiou \cite{21}, we have
	\begin{equation}\label{eq90}
		C_1(\tau,y_0)\neq 0.
	\end{equation}
	
	Let $\hat{f}:\Omega\times\RR\rightarrow\RR$ be the Carath\'eodory function defined by
	\begin{equation}\label{eq91}
		\hat{f}(z,x)=\left\{\begin{array}{ll}
			f(z,w_-(z))&\mbox{if}\ x<w_-(z)\\
			f(z,x)&\mbox{if}\ w_-(z)\leq x\leq w_+(z)\\
			f(z,w_+(z))&\mbox{if}\ w_+(z)<x.
		\end{array}\right.
	\end{equation}
	
	We set $\hat{F}(t,x)=\int^x_0\hat{f}(z,s)ds$ and consider the $C^1$-functional $\hat{\varphi}:W^{1,p}_0(\Omega)\rightarrow\RR$ defined by
	$$\hat{\varphi}(u)=\frac{1}{p}||Du||^p_p+\frac{1}{2}||Du||^2_2-\int_{\Omega}\hat{F}(z,u)dz\ \mbox{for all}\ u\in W^{1,p}_0(\Omega).$$
	
	From Proposition \ref{prop8}, we know that
	\begin{equation}\label{eq92}
		C_k(\hat{\varphi},0)=\delta_{k,d_l}\ZZ\ \mbox{for all}\ k\in\NN_0\ (\mbox{recall}\ d_l={\rm dim}\,\bar{H}_l)
	\end{equation}
	\begin{claim}\label{cl2}
		$C_k(\tau,0)=\delta_{k,d_l}\ZZ$ for all $k\in\NN_0$.
	\end{claim}
	
	We consider the homotopy $h(t,u)$ defined by
	$$h(t,u)=(1-t)\hat{\varphi}(u)+t\tau(u)\ \mbox{for all}\ (t,u)\in[0,1]\times W^{1,p}_0(\Omega).$$
	
	Suppose we can find $\{t_n\}_{n\geq 1}\subseteq[0,1]$ and $\{u_n\}_{n\geq 1}\subseteq W^{1,p}_0(\Omega)$ such that
	\begin{equation}\label{eq93}
		t_n\rightarrow t,\ u_n\rightarrow 0\ \mbox{in}\ W^{1,p}_0(\Omega),\ h'_u(t_n,u_n)=0\ \mbox{for all}\ n\in\NN.
	\end{equation}
	
	From the equality in (\ref{eq93}) we have
	\begin{eqnarray}\label{eq94}
		&&A_p(u_n)+A(u_n)=(1-t_n)N_{\hat{f}}(u_n)+t_nN_{\tau}(u_n)\ \mbox{for all}\ n\in\NN,\nonumber\\
		&\Rightarrow&-\Delta_pu_n(z)-\Delta u_n(z)=(1-t_n)\hat{f}(z,u_n(z))+t_ne(z,u_n(z))\\
		&&\mbox{for almost all}\ z\in\Omega,\ u_n|_{\partial\Omega}=0.\nonumber
	\end{eqnarray}
	
	By (\ref{eq93}), (\ref{eq94}) and Theorem 7.1, p. 286, of Ladyzhenskaya and Uraltseva \cite{17} (see also Corollary 8.7, p. 208, of Motreanu, Motreanu and Papageorgiou \cite{21}), we can find $c_{14}>0$ such that
	\begin{equation}\label{eq95}
		||u_n||_{\infty}\leq c_{14}\ \mbox{for all}\ n\in\NN\,.
	\end{equation}
	
	Then from (\ref{eq95}) and Theorem 1 of Lieberman \cite{19}, we infer that there exist $\alpha\in(0,1)$ and $c_{15}>0$ such that
	$$u_n\in C^{1,\alpha}_0(\overline{\Omega}),\ ||u_n||_{C^{1,\alpha}_0(\overline{\Omega})}\leq c_{15}\ \mbox{for all}\ n\in\NN.$$
	
	Since $C^{1,\alpha}_0(\overline{\Omega})$ is embedding compactly in $C^1_0(\overline{\Omega})$, it follows that
	\begin{eqnarray*}
		&&u_n\rightarrow 0\ \mbox{in}\ C^1_0(\overline{\Omega})\ (\mbox{see (\ref{eq93})}),\\
		&\Rightarrow&u_n\in[v_*,u_*]\ \mbox{for all}\ n\geq n_0,\\
		&\Rightarrow&\{u_n\}_{n\geq n_0}\subseteq K_{\tau}\ (\mbox{see (\ref{eq86})}),
	\end{eqnarray*}
	a contradiction to our hypothesis that $K_{\tau}$ is finite.
	
	So, (\ref{eq93}) cannot happen and this shows that $0\in K_{h(t,\cdot)}$ is isolated uniformly in $t\in[0,1]$. Hence Theorem 5.2 of Corvellec and Hantoute \cite{11} (the homotopy invariance of critical groups), implies that
	\begin{eqnarray*}
		&&C_k(h(0,\cdot),0)=C_k(h(1,\cdot),0)\ \mbox{for all}\ k\in\NN_0,\\
		&\Rightarrow&C_k(\hat{\varphi},0)=C_k(\tau,0)\ \mbox{for all}\ k\in\NN_0,\\
		&\Rightarrow&C_k(\tau,0)=\delta_{k,d_l}\ZZ\ \mbox{for all}\ k\in\NN_0\ (\mbox{see (\ref{eq92})}).
	\end{eqnarray*}
	
	This proves Claim \ref{cl2}.
	
	Since $l\geq 2$ (see hypotheses $H_3$), we have $d_l\geq 2$. So, from Claim \ref{cl2} and (\ref{eq90}), it follows that $y_0\neq 0$. Therefore $y_0\in[v_*,u_*]\cap C^1_0(\overline{\Omega})\backslash\{0\}$ is nodal.
\end{proof}

So far we have not used the asymptotic conditions at $\pm\infty$ (that is, hypotheses $H_3(ii),(iii)$). Next, using them we will generate two more nontrivial smooth solutions of constant sign, for a total of five nontrivial smooth solutions all with sign information and ordered.
\begin{theorem}\label{th13}
	If hypotheses $H_3$ hold, then problem (\ref{eq1}) admits five nontrivial smooth solutions
	\begin{eqnarray*}
		&&u_0,\hat{u}\in {\rm int}\,C_+,\hat{u}-u_0\in C_+\backslash\{0\},\\
		&&v_0,\hat{v}\in-{\rm int}\,C_+,v_0-\hat{v}\in C_+\backslash\{0\},\\
		&&y_0\in[v_0,u_0]\cap C^1_0(\overline{\Omega})\ \mbox{nodal}.
	\end{eqnarray*}
\end{theorem}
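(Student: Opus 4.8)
The plan is to augment the three solutions already obtained in Propositions~\ref{prop10}--\ref{prop12}: take $u_0$ to be the extremal positive solution $u_*\in{\rm int}\,C_+$ and $v_0$ the extremal negative solution $v_*\in-{\rm int}\,C_+$ from Proposition~\ref{prop11}, together with the nodal solution $y_0\in[v_0,u_0]\cap C^1_0(\overline{\Omega})$ from Proposition~\ref{prop12}; these already satisfy $v_0\le y_0\le u_0$. It then remains to produce two extra constant-sign solutions $\hat u\ge u_0$ and $\hat v\le v_0$, and this is precisely where the asymptotic hypotheses $H_3(ii),(iii)$ (unused in Propositions~\ref{prop10}--\ref{prop12}) must enter.

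To obtain $\hat u$, I would truncate the reaction below at $u_0$, setting
$$\tilde f_+(z,x)=\begin{cases}f(z,u_0(z))& \text{if } x\le u_0(z),\\ f(z,x)& \text{if } x>u_0(z),\end{cases}$$
with primitive $\tilde F_+$ and associated $C^1$-functional $\tilde\varphi_+(u)=\frac1p\|Du\|_p^p+\frac12\|Du\|_2^2-\int_\Omega\tilde F_+(z,u)\,dz$. Since $\tilde f_+(z,\cdot)$ inherits $H_3(ii),(iii)$ as $x\to+\infty$ and is bounded as $x\to-\infty$ (so the negative direction is coercive), a repetition of the argument of Proposition~\ref{prop5} shows that $\tilde\varphi_+$ satisfies the C-condition. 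Testing $\tilde\varphi_+'(u)=0$ with $(u_0-u)^+$ and using the monotonicity of $A_p$ and $A$ yields $K_{\tilde\varphi_+}\subseteq\{u\in W^{1,p}_0(\Omega):u\ge u_0\}$; on this set $\tilde f_+(z,\cdot)=f(z,\cdot)$, so every element of $K_{\tilde\varphi_+}$ solves (\ref{eq1}), and by the nonlinear regularity theory together with the Pucci--Serrin maximum and boundary principles used in Proposition~\ref{prop10} it lies in ${\rm int}\,C_+$ and dominates $u_0$. Hence it suffices to exhibit some $\hat u\in K_{\tilde\varphi_+}$ with $\hat u\neq u_0$.

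Next I would show that $u_0$ is a local minimizer of $\tilde\varphi_+$. Freezing $\tilde f_+$ also at $w_+$ for $x>w_+$ produces a coercive, sequentially weakly lower semicontinuous functional $\bar\varphi_+$ with a global minimizer $\bar u$; the comparison argument and $H_3(iv)$ ($w_+$ a supersolution) give $u_0\le\bar u\le w_+$, so $\bar u$ solves (\ref{eq1}). If $\bar u\neq u_0$ we are done; otherwise, since $u_0<w_+$ on $\overline{\Omega}$, the functionals $\tilde\varphi_+$ and $\bar\varphi_+$ coincide on a $C^1_0(\overline{\Omega})$-neighborhood of $u_0$, so $u_0$ is a local $C^1_0(\overline{\Omega})$-minimizer of $\tilde\varphi_+$ and hence, by Proposition~\ref{prop4}, a local $W^{1,p}_0(\Omega)$-minimizer. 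We may assume $K_{\tilde\varphi_+}$ is finite (otherwise there are infinitely many solutions $\ge u_0$ and we are done), so $C_k(\tilde\varphi_+,u_0)=\delta_{k,0}\ZZ$. On the other hand, carrying out the homotopy computation of Proposition~\ref{prop6} with $\psi$ replaced by $\psi_+(u)=\frac1p\|Du\|_p^p-\frac\lambda p\|u^+\|_p^p$, $\lambda\in(\hat\lambda_m(p),\hat\lambda_{m+1}(p))\setminus\hat\sigma(p)$ — the only new feature being that the weak limit extracted in the resonant case is now nonnegative, hence forced to be a multiple of $\hat u_1(p)$ with $m=1$, after which the $H_3(iii)$ argument of Proposition~\ref{prop6} applies verbatim — gives $C_k(\tilde\varphi_+,\infty)=C_k(\psi_+,\infty)$; since $K_{\psi_+}=\{0\}$ and $\{\psi_+\le 0\}\setminus\{0\}$ is contractible, $C_k(\psi_+,\infty)=C_k(\psi_+,0)=0$ for all $k$. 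Comparing this with $C_k(\tilde\varphi_+,u_0)=\delta_{k,0}\ZZ$ and using that $\tilde\varphi_+$ satisfies the C-condition, $u_0$ cannot be the only critical point of $\tilde\varphi_+$; any other $\hat u\in K_{\tilde\varphi_+}$ is the fourth solution, $\hat u\in{\rm int}\,C_+$ with $\hat u-u_0\in C_+\backslash\{0\}$.

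The fifth solution $\hat v\in-{\rm int}\,C_+$ with $v_0-\hat v\in C_+\backslash\{0\}$ is produced by the symmetric construction, truncating $f$ above at $v_0$. Since $\hat v,v_0\in-{\rm int}\,C_+$, $u_0,\hat u\in{\rm int}\,C_+$, $y_0$ is nodal, and $\hat u\neq u_0$, $\hat v\neq v_0$ within each sign class, the five solutions are distinct, they are ordered by the chain $\hat v\le v_0\le y_0\le u_0\le\hat u$, and $C^1_0(\overline{\Omega})$-regularity follows as in Theorem~\ref{th9}. I expect the principal difficulty to lie in the third step: establishing the C-condition for the non-coercive resonant truncations and, above all, the computation $C_k(\tilde\varphi_+,\infty)=0$, which is what forces the genuinely new critical point and is the only place where the resonance conditions at $\pm\infty$ are used.
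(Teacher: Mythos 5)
Your construction is correct in its overall shape and shares the first two-thirds of its logic with the paper: the same lower truncation at $u_0$ (the paper's $\gamma_+$ is your $\tilde f_+$), the same auxiliary double truncation between $u_0$ and $w_+$ to exhibit $u_0$ as a local $C^1_0(\overline{\Omega})$-minimizer, and the same appeal to Proposition~\ref{prop4} to upgrade this to a local $W^{1,p}_0(\Omega)$-minimizer. Where you diverge is in how the second critical point of $\tilde\varphi_+$ (the paper's $\sigma_+$) is produced. The paper does not compute $C_k(\sigma_+,\infty)$ at all: having established the C-condition for $\sigma_+$ (by the argument of Proposition~\ref{prop5}, where the lower truncation makes $\{u_n^-\}$ automatically bounded) and the local-minimizer mountain-pass geometry, it shows directly that $\sigma_+(t\hat u_1(p))\to-\infty$ as $t\to+\infty$ using $H_3(ii)$, and then applies the mountain pass theorem (Theorem~\ref{th1}). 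The mountain pass value is $\ge m_\rho>\sigma_+(u_0)$, so the critical point $\hat u$ is distinct from $u_0$, and (\ref{eq97}) gives $\hat u\ge u_0$. Your Morse-theoretic alternative --- comparing $C_k(\tilde\varphi_+,u_0)=\delta_{k,0}\ZZ$ against $C_k(\tilde\varphi_+,\infty)=0$ --- is coherent but rests on two facts that you state without supplying proofs and which constitute the real work in your version: (i) the vanishing $C_k(\psi_+,0)=0$ for all $k$ with $\psi_+(u)=\frac1p\|Du\|_p^p-\frac\lambda p\|u^+\|_p^p$; the parenthetical justification that $\{\psi_+\le0\}\setminus\{0\}$ is contractible needs an argument, since this cone minus its vertex retracts onto $\{\psi_+\le0\}\cap\partial B_1$ and the contractibility of that set is not immediate (it is not simply ``the nonnegative directions dominated by $\hat u_1(p)$''); (ii) the uniform Cerami estimate for the homotopy $h(t,\cdot)=(1-t)\tilde\varphi_+ + t\psi_+$, where --- as you correctly note --- the one-sided truncation forces $y\ge0$ and the delicate $H_3(iii)$/Fatou estimate of Proposition~\ref{prop6} is only needed when $m=1$ and $t=0$, but you leave it to the reader to verify that the truncated quantity $\tilde f_+(z,x)x-p\tilde F_+(z,x)$ inherits the nonquadraticity bound uniformly. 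The paper's mountain pass route avoids both subtleties and is consequently shorter; your route, if the two asserted facts are proved, buys a cleaner conceptual picture (it makes explicit that the one-sided truncation kills the homology class at infinity that Proposition~\ref{prop6} detected for the full functional $\varphi$) at the cost of more machinery.
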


\begin{proof}
	Propositions \ref{prop10} and \ref{prop12} provide three nontrivial smooth solutions
	\begin{eqnarray*}
		&&u_0\in[0,w_+]\cap {\rm int}\,C_+\ \mbox{with}\ (w_+-u_0)(z)>0\ \mbox{for all}\ z\in\overline{\Omega},\\
		&&v_0\in[w_-,0]\cap(-{\rm int}\,C_+)\ \mbox{with}\ (u_0-w_-)(z)>0\ \mbox{for all}\ z\in\overline{\Omega},\\
		&&y_0\in[v_0,u_0]\cap C^1_0(\overline{\Omega})\ \mbox{nodal}.
	\end{eqnarray*}
	
	On account of Proposition \ref{prop11}, we may assume that $u_0$ and $v_0$ are extremal constant sign solutions (that is, $u_0=u_*$ and $v_0=v_*$).
	
	We consider the Carath\'eodory function $\gamma_+:\Omega\times\RR\rightarrow\RR$ defined by
	\begin{equation}\label{eq96}
		\gamma_+(z,x)=\left\{\begin{array}{ll}
			f(z,u_0(z))&\mbox{if}\ x\leq u_0(z)\\
			f(z,x)&\mbox{if}\ u_0(z)<x
		\end{array}\right.
	\end{equation}
	and set $\Gamma_+(z,x)=\int^x_0\gamma_+(z,s)ds$. we consider the $C^1$-functional $\sigma_+:W^{1,p}_0(\Omega)\rightarrow\RR$ defined by
	$$\sigma_+(u)=\frac{1}{p}||Du||^p_p+\frac{1}{2}||Du||^2_2-\int_{\Omega}\Gamma_+(z,u)dz\ \mbox{for all}\ u\in W^{1,p}_0(\Omega).$$
	
	Using (\ref{eq96}) we can easily show that
	\begin{equation}\label{eq97}
		K_{\sigma_+}\subseteq\left[u_0\right)=\{u\in W^{1,p}_0(\Omega):u_0(z)\leq u(z)\ \mbox{for almost all}\ z\in\Omega\}.
	\end{equation}
	
	Note that $u_0\in K_{\sigma_+}$. We may assume that
	\begin{equation}\label{eq98}
		K_{\sigma_+}\cap[u_0,w_+]=\{u_0\}.
	\end{equation}
	
	Otherwise, we already have a second positive solution $\hat{u}\geq u_0,\hat{u}\neq u_0,\hat{u}\in C^1_0(\overline{\Omega})$. Consider the following Carath\'eodory function
	\begin{equation}\label{eq99}
		\hat{\gamma}_+(z,x)=\left\{\begin{array}{ll}
			\gamma_+(z,x)&\mbox{if}\ x\leq w_+(z)\\
			\gamma_+(z,w_+(z))&\mbox{if}\ w_+(z)<x.
		\end{array}\right.
	\end{equation}
	
	We set $\hat{\Gamma}_+(z,x)=\int^x_0\hat{\gamma}_+(z,s)ds$ and consider the $C^1$-functional $\hat{\sigma}_+:W^{1,p}_0(\Omega)\rightarrow\RR$ defined by
	$$\hat{\sigma}_+(u)=\frac{1}{p}||Du||^p_p+\frac{1}{2}||Du||^2_2-\int_{\Omega}\hat{\Gamma}_+(z,u)dz\ \mbox{for all}\ u\in W^{1,p}_0(\Omega).$$
	
	From (\ref{eq99}) it is clear that $\hat{\sigma}_+$ is coercive. Also, it is sequentially weakly lower semicontinuous. So, we can find $\tilde{u}_0\in W^{1,p}_0(\Omega)$ such that
	\begin{equation}\label{eq100}
		\hat{\sigma}_+(\tilde{u}_0)=\inf[\hat{\sigma}_+(u):u\in W^{1,p}_0(\Omega)].
	\end{equation}
	
	Using (\ref{eq99}), we show that
	\begin{equation}\label{eq101}
		K_{\hat{\sigma}_+}\subseteq[u_0,w_+]\ (\mbox{see also (\ref{eq97})}).
	\end{equation}
	
	Then (\ref{eq98}), (\ref{eq100}), (\ref{eq101}) imply that
	\begin{equation}\label{eq102}
		\tilde{u}_0=u_0\in[0,w_+]\ (w_+-u_0)(z)>0\ \mbox{for all}\ z\in\overline{\Omega}.
	\end{equation}
	
	From (\ref{eq99}) we see that
	\begin{eqnarray*}
		&&\sigma_+|_{[0,w_+]}=\hat{\sigma}_+|_{[0,w_+]},\\
		&\Rightarrow&u_0\ \mbox{is a local}\ C^1_0(\overline{\Omega})-\mbox{minimizer of}\ \sigma_+\ (\mbox{see (\ref{eq102})}),\\
		&\Rightarrow&u_0\ \mbox{is a local}\ W^{1,p}_0(\Omega)-\mbox{minimizer of}\ \sigma_+\ (\mbox{see Proposition \ref{prop4}}).
	\end{eqnarray*}
	
	Because of (\ref{eq97}) we see that we may assume that $K_{\sigma_+}$ is finite or otherwise  we already have an infinity of positive, smooth (by the nonlinear regularity theory) solutions of (\ref{eq1}), all bigger than $u_0$. Hence, we can find  small $\rho\in(0,1)$ such that
	\begin{equation}\label{eq103}
		\sigma_+(u_0)<\inf[\sigma_+(u):||u-u_0||=\rho]=m^+_{\rho}.
	\end{equation}
	
	Reasoning as in the proof of Proposition \ref{prop5}, we can establish that
	\begin{equation}\label{eq104}
		\sigma_+\ \mbox{satisfies the C-condition}
	\end{equation}
	(in this case, due to (\ref{eq96}), for any Cerami sequence $\{u_n\}_{n\geq 1}\subseteq W^{1,p}_0(\Omega)$ we have automatically that $\{u^-_n\}_{n\geq 1}\subseteq W^{1,p}_0(\Omega)$ is bounded).
	
	Hypotheses $H_3(i),(ii)$ imply that we can find $\vartheta>\hat{\lambda}_m(p)$ and $c_{16}>0$ such that
	\begin{equation}\label{eq105}
		F(z,x)\leq\frac{\vartheta}{p}x^p+c_{16}\ \mbox{for almost all}\ z\in\Omega,\ \mbox{all}\ x\geq 0.
	\end{equation}
	
	Since $\hat{u}_1(p)\in {\rm int}\,C_+$, we can find $t\geq 1$ big such that $t\hat{u}_1(p)\geq u_0$ (see Proposition 2.1 of Marano and Papageorgiou \cite{20}). Then
	\begin{eqnarray}\label{eq106}
		 \sigma(t\hat{u}_1(p))&\leq&\frac{t^p}{p}\hat{\lambda}_1(p)+\frac{t^2}{2}||D\hat{u}_1(p)||^2_2-\frac{t^p}{p}\vartheta+c_{17}\ \mbox{for some}\ c_{17}>0\nonumber\\
		&&(\mbox{see (\ref{eq105}) and recall that}\ ||\hat{u}_1(p)||_p=1)\nonumber\\
		&=&\frac{t^p}{p}[\hat{\lambda}_1(p)-\vartheta]+\frac{t^2}{2}||D\hat{u}_1(p)||^2_2+c_{17}.
	\end{eqnarray}
	
	Since $\vartheta>\hat{\lambda}_1(p)$ and $p>2$ from (\ref{eq106}) it follows that
	\begin{equation}\label{eq107}
		\sigma(t\hat{u}_1(p))\rightarrow-\infty\ \mbox{as}\ t\rightarrow+\infty\,.
	\end{equation}
	
	Then (\ref{eq103}), (\ref{eq104}), (\ref{eq107}) permit the use of Theorem \ref{th1} (the mountain pass theorem). So, we can find $\hat{u}\in W^{1,p}_0(\Omega)$ such that
	\begin{equation}\label{eq108}
		\hat{u}\in K_{\sigma_+}\ \mbox{and}\ m^+_{\rho}\leq\sigma_+(\hat{u}).
	\end{equation}
	
	From (\ref{eq96}), (\ref{eq97}), (\ref{eq103}) and (\ref{eq108}) it follows that
	$$u_0\leq\hat{u},\ \hat{u}\neq u_0\ \mbox{and}\ \hat{u}\in {\rm int}\,C_+\ \mbox{is a solution of (\ref{eq1})}.$$
	
	Similarly, working with $v_0\in[w_-,0]\cap (-{\rm int}\,C_+)$ on the negative semiaxis as above, we produce $\hat{v}\in-{\rm int}\,C_+$, $\hat{v}\leq v_0$, $\hat{v}\neq v_0$, a second negative solution for problem (\ref{eq1}).
\end{proof}

\medskip
{\bf Acknowledgments.}  The authors were supported in part by the Slovenian Research Agency program P1-0292 and projects N1-0064, J1-8131, J1-7025, and J1-6721. V. R\u{a}dulescu was also supported by a grant of the Romanian National
Authority for Scientific Research and Innovation, CNCS-UEFISCDI, project number PN-III-P4-ID-PCE-2016-0130.

\end{document}